\newtheorem{theorem}{Theorem}[section]
\newtheorem{corollary}{Corollary}[section]
\newtheorem{remark}{Remark}[section]
\newtheorem{proposition}{Proposition}[section]
\newtheorem{lemma}{Lemma}[section]
\newenvironment{customthm}[1]
  {\innercustomthm}
  {\endinnercustomthm}
\newtheorem{definition}{Definition}[section]
\newtheorem{assumption}{Assumptions}[section]
\newcommand{\comment}[1]{}
\title{Analysis of Randomized Householder-Cholesky QR Factorization with Multisketching}
\author{Andrew J. Higgins\thanks{Center for Computing Research, Sandia National Laboratories, Albuquerque, New Mexico, USA (\texttt{ajhiggi@sandia.gov}, \texttt{egboman@sandia.gov}, \texttt{iyamaza@sandia.gov}).} 
\and Daniel B. Szyld\thanks{Temple University, Philadelphia, Pennsylvania, USA (\texttt{szyld@temple.edu}).} 
\and  Erik G. Boman\footnotemark[1]
\and Ichitaro Yamazaki\footnotemark[1]}
\date{\today}
\begin{document}
\maketitle
\begin{abstract}
CholeskyQR2 and shifted CholeskyQR3 are two 
state-of-the-art
algorithms for computing tall-and-skinny QR factorizations since they attain high performance on current computer architectures. 
However, to guarantee stability,
for some applications, CholeskyQR2 faces a prohibitive restriction on the condition number of 
the underlying matrix to factorize. Shifted CholeskyQR3 is stable but has $50\%$ more
computational and communication costs than CholeskyQR2. 
In this paper,
a randomized QR algorithm called Randomized Householder-Cholesky (\texttt{rand\_cholQR}) is proposed and
analyzed. 
Using one or two random sketch matrices, it is  proved that 
with high probability, 
its orthogonality error is bounded by a constant of the order of 
unit roundoff 
for any numerically full-rank matrix,  
and hence it is as stable as shifted CholeskyQR3. 
An evaluation of the 
performance of \texttt{rand\_cholQR} on an NVIDIA A100 GPU 
demonstrates that for tall-and-skinny matrices, \texttt{rand\_cholQR} with multiple sketch matrices is nearly as fast as, or in some cases faster than, CholeskyQR2. Hence, compared to CholeskyQR2, \texttt{rand\_cholQR} is more stable with almost no extra computational or memory cost, and therefore a superior algorithm both in theory and practice.
\end{abstract}

\paragraph{Keywords:}Randomized Linear Algebra, QR Factorization, Communication-Avoiding Algorithms, Error Analysis, Numerical Stability, GPUs

\paragraph{MSC Classification:}
65F05, 65F20, 65F25, 65G50, 15B52

\section{Introduction}
Computing the QR factorization of tall-and-skinny matrices is a critical component of many scientific and engineering applications, including the solution of least squares problems, block orthogonalization kernels for solving linear systems and eigenvalue problems within block or $s$-step Krylov methods, dimensionality reduction methods for data analysis like Principal Component Analysis, and many others. 
The current state-of-the-art QR algorithms for tall-and-skinny matrices are the CholeskyQR2 
and shifted CholeskyQR3 algorithms~\cite{sCholQR3, CholeskyQR2Def}, 
thanks to their communication-avoiding properties along with their 
exploitation of vendor provided highly-optimized dense linear algebra subroutines~\cite{lapackUserGuide, cuda, cuSOLVER}.
However, CholeskyQR2 may fail to accurately factorize a matrix $V$ when 
its condition number $\kappa(V) \gtrapprox \textbf{u}^{-1/2}$, where 
$\textbf{u}$ is unit roundoff \cite{CholeskyQR2ErrAnalysis}. 
Shifted CholeskyQR3 is numerically stable as long as $\kappa(V) \lessapprox \textbf{u}^{-1}$, but it requires over $50\%$ more computational and communication cost than CholeskyQR2 \cite{sCholQR3}. 
Although more stable communication-avoiding algorithms exist, such as TSQR \cite{tsqrPerf},
they rely on Householder QR factorizations of potentially large matrices, and are often significantly slower than CholeskyQR2 in practice 
\cite{CholeskyQR2Def}.

In this paper, we first present and analyze a randomized algorithm called \texttt{randQR} 
for orthogonalizing the columns of a tall-and-skinny matrix with respect to a specific 
inner product. 
In order to reduce the cost of the computations, we propose to use ``multisketching," i.e., the use
of two consecutive sketch matrices, within \texttt{randQR}. Using \texttt{randQR} with multisketching as a preconditioner for less stable QR factorizations can be an efficient strategy for computing the true QR factorization of a matrix, which leads to the primary focus of this paper, which is an algorithm called \texttt{rand\_cholQR} for computing the true QR 
factorization of a tall-and-skinny matrix $V$.  
Our approach is general in the sense that our analysis
applies to any two $\epsilon$-subspace embedding sketching matrices (see Section~\ref{sec:randSketchTheory}
for definitions), but is specifically motivated by the use of a large sparse sketch followed by a smaller dense sketch, such as
a Gaussian or Radamacher sketch \cite{GaussSize}.
Our analysis applies 
in particular to Count-Gauss (one application of CountSketch followed by a Gaussian
sketch), as described in
\cite{Kapralov_Potluru_PMLR_16,Sobczyk_Gallopoulos_SIMAX_21, Sobczyk_Gallopoulos_TOMS_22}.

We prove that with high probability, 
the orthogonality error of \texttt{rand\_cholQR} is on the order of unit roundoff for any numerically 
full-rank matrix $V$ (i.e., $\kappa(V) \lessapprox \textbf{u}^{-1}$)
and hence it is as stable as shifted CholeskyQR3 and it is significantly more numerically stable than CholeskyQR2. 
Our numerical experiments ilustrate the theoretical results.
In addition, the \texttt{rand\_cholQR} algorithm may be implemented using the same basic linear algebra kernels as CholeskyQR2. Therefore, it is simple to implement and has the same communication-avoiding properties. We perform a 
computational study on a state-of-the-art GPU to demonstrate that \texttt{rand\_cholQR} can perform up to $4\%$ faster than CholeskyQR2 and $56.6\%$ faster than shifted CholeskyQR3, while significantly improving the robustness of CholeskyQR2.

In summary, our primary contribution consists of a new error analysis of a multisketched randomized QR algorithm, proving it can be safely used for matrices of larger condition number than CholeskyQR2 can handle.
This analysis also applies to the case of one sketch, improving upon the
existing results.
Our implementation confirms and illustrates the theory developed in this paper. Our secondary contribution is a computational study that tangibly demonstrates that the multisketched algorithm has superior performance over the existing single sketch algorithm and similar performance to the high-performance but less stable CholeskyQR2 algorithm.

%

In Section \ref{sec:relWork}, we begin by discussing prior work on similar topics. Then, in Section \ref{sec:randSketchTheory}, we present some preliminary definitions and known results from randomized linear algebra relevant to this work.
We follow with Section~\ref{multi:sec}, where we present multisketching on a conceptual level, and how to incorporate it into a randomized QR factorization (\texttt{rand\_cholQR}). We also discuss performance considerations for \texttt{rand\_cholQR} compared to other high performance tall-skinny QR algorithms, leading to the motivation as to why
multisketching is recommended.
In Section~\ref{sec:errAnalysis}, we present rigorous error bounds and their proofs for
the proposed multisketched \texttt{rand\_cholQR}. 
These bounds can also be applied to the case
of a single sketch matrix, and we compare the new results to those available
in the literature. Numerical experiments are presented in Section~\ref{numer:sec},
followed by our conclusions.
For completeness, all detailed run times of our experiments are reported in an Appendix.

\section{Related Work} \label{sec:relWork}
In the case of a single sketch matrix, the concept of sketching a tall-and-skinny matrix, computing its QR factorization, and then preconditioning the matrix with the resulting triangular factor like \texttt{randQR} (Algorithm \ref{alg:randQR}) is not new. The earliest appearance of such an algorithm was by Rokhlin and Tygert in 2008 \cite{RokhlinTygert} for solving overdetermined least squares problems, where they proposed a version of \texttt{randQR} with a column-pivoted QR factorization and a single subsampled randomized Hadamard transform sketch. Fan et al.~\cite{Fan:NovelRandomizedXR} proposed a similar algorithm called rQR-CholQR, but they only used a very simple sketch based on sampling rows of $A$. They did not consider subspace embeddings or multisketching.

Prior to this paper, Balabanov and Grigori proposed the ``RCholeskyQR" method in an unpublished manuscript \cite{GrigoriRandBlock}, which is identical to what we refer to as \texttt{randQR}, in the case of a single $(\varepsilon,d,m)$ oblivious $\ell_2$-subspace embedding. While this paper was being written, Balabanov gave stability results similar to Corollary \ref{cor:randQRcond} in an additional unpublished manuscript \cite{Balabanov:2022:cholqr}. However, our results differ from Balabanov's, as ours impose no assumptions on the level of accuracy performed by subroutines within the algorithm, and we meticulously derive all bounds from existing roundoff error analysis of each subroutine. Additionally, Balabanov's work imposes a far stricter limit on the subspace embedding parameter $\epsilon \leq \frac{1}{2}$, while ours provides analysis up to $\epsilon < \frac{616}{634}$ for a $(\varepsilon,d,m)$ oblivious $\ell_2$-subspace embedding, which is nearly the theoretical upper limit of $\epsilon < 1$ imposed by the theory in Section \ref{sec:randSketchTheory}. This is significant, because stability guarantees for larger values of $\epsilon$ ensure high accuracy with smaller sketch matrices, resulting in a more computationally efficient algorithm, as demonstrated in Section \ref{sec:epsResults}.

Our results extend beyond a single $(\varepsilon,d,m)$ oblivious $\ell_2$-subspace embedding, and cover the more generalized case of two subspace embeddings (i.e., multisketch). Also, our work includes explicit analysis of the $S_2S_1$-orthogonality error of \texttt{randQR}, which is a specific notion of orthogonality with respect to a sketched inner product, and the loss of orthogonality error in the standard Euclidean inner product of \texttt{rand\_cholQR}.

Our work is novel in several ways. To our knowledge, this work is the first to propose and analyze a randomized QR algorithm with multiple sketches. The stability results in this paper improve upon and expand the existing stability analysis of \texttt{randQR} and \texttt{rand\_cholQR}, and considers the multisketch case for the first time. Additionally, our experimental results are the first to demonstrate the performance of \texttt{rand\_cholQR} in a parallel heterogeneous computing environment under any sketching framework, particularly in the multisketch case which allows the algorithm to sometimes run faster than the widely used high-performance \texttt{cholQR2} algorithm. This tangibly demonstrates the potential of the multisketch \texttt{rand\_cholQR} in exascale applications. 

\section{Preliminaries on Random Sketching} \label{sec:randSketchTheory}
Suppose one would like to compress $V \in \mathbb{R}^{n \times m}$ into a matrix with 
fewer rows with nearly the same norm. We denote the \textit{sketch matrix} by $S \in \mathbb{R}^{p \times n}$ for $p \ll n$.
The sketch matrix is typically chosen to be a \textit{$\epsilon$-subspace embedding} (defined below), or a linear map to a lower dimensional space that preserves $\ell_2$-inner products and norms of all vectors within the subspace up to a factor of $\sqrt{1 \pm \varepsilon}$ for $\varepsilon \in [0,1)$ \cite{Balabanov:2022,Nakatsukasa:2021,Sarlos:2006}.

\begin{definition}[$\varepsilon$-subspace embedding] \label{def:subspaceEmbedding}
Given $\varepsilon \in [0,1)$, the sketch matrix \linebreak $S \in \mathbb{R}^{p \times n}$ is an \emph{$\varepsilon$-subspace embedding} for the subspace $\mathcal{V} \subset \mathbb{R}^n$ if $\forall x, y \in \mathcal{V}$, 
\[ 
    | \langle x,y \rangle - \langle Sx,Sy \rangle | \leq \varepsilon \|x\|_2\|y\|_2, \label{eq:innerProdEmbedding}
\] 
where $\langle \cdot , \cdot \rangle$ is the Euclidean inner product.
\end{definition}

\begin{proposition} {\rm \cite{Balabanov:2022} }\label{prop:embeddingNorm}
If the sketch matrix $S \in \mathbb{R}^{p \times n}$ is an \emph{$\varepsilon$-subspace embedding} for the subspace $\mathcal{V} \subset \mathbb{R}^n$, then $\forall x \in \mathcal{V}$, then,
\begin{equation}
    \sqrt{1-\varepsilon}~\|x\|_2 \leq \|Sx\|_2 \leq \sqrt{1+\varepsilon}~\|x\|_2. \label{eq:normEmbedding} 
\end{equation}
\end{proposition}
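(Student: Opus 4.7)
The plan is to derive the norm bound directly from the inner-product form of the embedding property in Definition \ref{def:subspaceEmbedding} by specializing to the diagonal case $y=x$. This reduces the two-argument distortion statement to a single-argument statement about $\|Sx\|_2^2$ versus $\|x\|_2^2$, which is exactly the quantity we need to control.

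First, I would substitute $y = x$ into the inequality $|\langle x,y\rangle - \langle Sx,Sy\rangle| \leq \varepsilon \|x\|_2\|y\|_2$, obtaining $|\|x\|_2^2 - \|Sx\|_2^2| \leq \varepsilon \|x\|_2^2$. Removing the absolute value yields the two-sided bound $(1-\varepsilon)\|x\|_2^2 \leq \|Sx\|_2^2 \leq (1+\varepsilon)\|x\|_2^2$. Next, since $\varepsilon \in [0,1)$ we have $1-\varepsilon > 0$, so both sides are nonnegative and the square-root function is monotone; taking square roots throughout preserves the inequalities and produces exactly \eqref{eq:normEmbedding}.

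There is no real obstacle here: the proposition is essentially a restatement of the definition restricted to $y=x$, and the only subtlety is confirming that $\mathcal{V}$ being a subspace guarantees $x \in \mathcal{V}$ implies the diagonal pair $(x,x)$ lies in the allowed set, which is automatic. The positivity $1-\varepsilon > 0$, needed to justify the square root, is guaranteed by the hypothesis $\varepsilon \in [0,1)$ built into Definition \ref{def:subspaceEmbedding}. I would therefore present the argument as a short two-line derivation rather than a structured multi-step proof.
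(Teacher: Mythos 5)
Your proof is correct and follows essentially the same route as the paper, which simply notes that Proposition \ref{prop:embeddingNorm} follows by substituting $y=x$ in Definition \ref{def:subspaceEmbedding}; your added remarks about taking square roots and the positivity of $1-\varepsilon$ just make explicit what the paper leaves implicit.
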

\begin{corollary} \label{cor:embeddingNorm}
    If the sketch matrix $S \in \mathbb{R}^{p \times n}$ is an \emph{$\varepsilon$-subspace embedding} for the subspace $\mathcal{V} \subset \mathbb{R}^n$, and $V$ is a matrix whose columns form a basis of $\mathcal{V}$, then
    \begin{align}
        \sqrt{1-\varepsilon} \|V\|_2 &\leq \|SV\|_2 \leq \sqrt{1+\varepsilon} \|V\|_2, \label{eq:SV2Nrmcor} \\
        \sqrt{1-\varepsilon} \|V\|_F &\leq \|SV\|_F \leq \sqrt{1+\varepsilon} \|V\|_F, \label{eq:SVfroNrmcor} \\
        \sqrt{1-\varepsilon}~\sigma_{min}(V) &\leq \sigma_{min}(SV) \leq \sqrt{1+\varepsilon}~\sigma_{max}(V). \label{eq:SVsingValcor}
    \end{align}
\end{corollary}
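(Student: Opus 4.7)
The plan is to reduce each of the three inequalities to a pointwise application of Proposition~\ref{prop:embeddingNorm}. The crucial observation that powers every step is that, since the columns of $V$ form a basis of $\mathcal{V}$, every vector of the form $Vy$ lies in $\mathcal{V}$, so the two-sided bound $\sqrt{1-\varepsilon}\|Vy\|_2 \le \|SVy\|_2 \le \sqrt{1+\varepsilon}\|Vy\|_2$ holds for all $y$.

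For the spectral-norm bound \eqref{eq:SV2Nrmcor}, I would write $\|SV\|_2 = \max_{\|y\|_2=1}\|SVy\|_2$. Applying the upper half of the pointwise bound yields $\|SVy\|_2 \le \sqrt{1+\varepsilon}\|Vy\|_2 \le \sqrt{1+\varepsilon}\|V\|_2$, giving the upper estimate after taking the maximum over $y$. For the lower estimate, choose $y^\star$ with $\|y^\star\|_2 = 1$ attaining $\|Vy^\star\|_2 = \|V\|_2$; then the lower half of the pointwise bound gives $\|SV\|_2 \ge \|SVy^\star\|_2 \ge \sqrt{1-\varepsilon}\|V\|_2$.

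For the Frobenius-norm bound \eqref{eq:SVfroNrmcor}, I would use the column decomposition $\|V\|_F^2 = \sum_i \|Ve_i\|_2^2$ and $\|SV\|_F^2 = \sum_i \|SVe_i\|_2^2$, where $e_i$ is the $i$-th standard basis vector. Since each $Ve_i \in \mathcal{V}$, Proposition~\ref{prop:embeddingNorm} applied columnwise gives $(1-\varepsilon)\|Ve_i\|_2^2 \le \|SVe_i\|_2^2 \le (1+\varepsilon)\|Ve_i\|_2^2$; summing over $i$ and taking square roots yields the result.

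For the singular-value bound \eqref{eq:SVsingValcor}, I would use the characterization $\sigma_{\min}(SV) = \min_{\|y\|_2=1}\|SVy\|_2$. The lower half of the pointwise bound gives $\|SVy\|_2 \ge \sqrt{1-\varepsilon}\|Vy\|_2$ for every unit $y$, so taking the minimum produces $\sigma_{\min}(SV) \ge \sqrt{1-\varepsilon}\,\sigma_{\min}(V)$. For the upper inequality, applying the upper half of the pointwise bound at a unit $y$ achieving the minimum of $\|Vy\|_2$ gives $\sigma_{\min}(SV) \le \|SVy\|_2 \le \sqrt{1+\varepsilon}\,\sigma_{\min}(V) \le \sqrt{1+\varepsilon}\,\sigma_{\max}(V)$. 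There is no real obstacle here; the only care required is to justify that Proposition~\ref{prop:embeddingNorm} is applicable at the specific test vectors chosen (which follows immediately from $V$ spanning $\mathcal{V}$).
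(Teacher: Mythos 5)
Your proposal is correct and follows exactly the route the paper intends: the paper dispatches this corollary in one line as ``a simple consequence of Proposition~\ref{prop:embeddingNorm} using the definition of the $\ell_2$ matrix norm, the Frobenius norm, and the minimum singular value,'' and your argument simply fills in those variational and columnwise details, resting on the same key observation that $Vy \in \mathcal{V}$ for every $y$. No gaps; your bound $\sigma_{\min}(SV) \le \sqrt{1+\varepsilon}\,\sigma_{\min}(V)$ is in fact slightly sharper than the stated \eqref{eq:SVsingValcor}.
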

Proposition \ref{prop:embeddingNorm} follows simply by substituting $y = x$ in Definition \ref{def:subspaceEmbedding}, and Corollary \ref{cor:embeddingNorm} is a simple consequence of Proposition \ref{prop:embeddingNorm} using the definition of the $\ell_2$ matrix norm, the Frobenius norm, and the minimum singular value. Furthermore, Proposition \ref{prop:embeddingNorm} can be used to relate the singular values of $SV$ to those of $V$ in a different way to bound the condition number of $V$ by that of $SV$. 
\begin{proposition} {\rm \cite{Balabanov:2022} }\label{prop:embeddingSingularVals}
If the sketch matrix $S \in \mathbb{R}^{p \times n}$ is an \emph{$\varepsilon$-subspace embedding} for the subspace $\mathcal{V} \subset \mathbb{R}^n$, and $V$ is a matrix whose columns form a basis of $\mathcal{V}$, then 
    \begin{align}
        (1+\varepsilon)^{-1/2}~\sigma_{min}(SV) &\leq \sigma_{min}(V) \leq \sigma_{max}(V) 
\leq (1-\varepsilon)^{-1/2}~\sigma_{max}(SV). \label{eq:singValEmbedding} 
    \end{align}
    Thus,
    \begin{equation}
        \kappa(V) \leq \sqrt{\frac{1-\varepsilon}{1+\varepsilon}}~\kappa(SV). \label{eq:condEmbedding}
    \end{equation}
\end{proposition}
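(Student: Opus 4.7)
The plan is to obtain both singular-value inequalities directly from Proposition~\ref{prop:embeddingNorm} applied to arbitrary vectors in the range of $V$, and then combine them to bound $\kappa(V)$. The proof is essentially a one-line application of the embedding norm bound followed by a variational (Courant--Fischer) argument; I expect no real obstacle, because the column-span hypothesis on $V$ is exactly what makes the embedding bound applicable to every vector of the form $Vx$.

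First, I would invoke the variational characterizations $\sigma_{\max}(V) = \max_{\|x\|_2=1}\|Vx\|_2$ and $\sigma_{\min}(V) = \min_{\|x\|_2=1}\|Vx\|_2$. Since the columns of $V$ form a basis of $\mathcal{V}$, we have $Vx \in \mathcal{V}$ for every $x$, so Proposition~\ref{prop:embeddingNorm} (with the vector $Vx$ in place of $x$) gives, for every $x \in \mathbb{R}^m$,
\[
\sqrt{1-\varepsilon}\,\|Vx\|_2 \;\le\; \|SVx\|_2 \;\le\; \sqrt{1+\varepsilon}\,\|Vx\|_2.
\]

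Next, from the right-hand inequality $\|SVx\|_2 \le \sqrt{1+\varepsilon}\,\|Vx\|_2$, I would rearrange to $\|Vx\|_2 \ge (1+\varepsilon)^{-1/2}\|SVx\|_2$ and take the infimum over unit $x$; this yields the leftmost inequality of \eqref{eq:singValEmbedding}, namely $(1+\varepsilon)^{-1/2}\sigma_{\min}(SV) \le \sigma_{\min}(V)$. Symmetrically, rearranging the left-hand inequality to $\|Vx\|_2 \le (1-\varepsilon)^{-1/2}\|SVx\|_2$ and taking the supremum over unit $x$ gives the rightmost inequality $\sigma_{\max}(V) \le (1-\varepsilon)^{-1/2}\sigma_{\max}(SV)$. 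The trivial chain $\sigma_{\min}(V) \le \sigma_{\max}(V)$ fills in the middle and completes \eqref{eq:singValEmbedding}.

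Finally, dividing the upper bound on $\sigma_{\max}(V)$ by the lower bound on $\sigma_{\min}(V)$ yields the condition-number inequality \eqref{eq:condEmbedding} directly. The only subtlety worth a sentence in the writeup is to justify that Proposition~\ref{prop:embeddingNorm} applies to every $Vx$, which is immediate from the basis hypothesis; no genuine difficulty arises beyond this bookkeeping.
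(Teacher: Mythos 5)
Your argument is correct and is exactly the route the paper intends: the paper offers no separate proof of Proposition~\ref{prop:embeddingSingularVals}, saying only that it follows from Proposition~\ref{prop:embeddingNorm} by relating the singular values of $SV$ to those of $V$, and your variational argument over unit vectors $x$ with $Vx\in\mathcal{V}$ is the standard way to fill that in. One caution on your last step: dividing $\sigma_{\max}(V)\le(1-\varepsilon)^{-1/2}\sigma_{\max}(SV)$ by $\sigma_{\min}(V)\ge(1+\varepsilon)^{-1/2}\sigma_{\min}(SV)$ gives $\kappa(V)\le\sqrt{(1+\varepsilon)/(1-\varepsilon)}\;\kappa(SV)$, which is the \emph{reciprocal} of the constant printed in \eqref{eq:condEmbedding}; the paper's displayed factor $\sqrt{(1-\varepsilon)/(1+\varepsilon)}\le 1$ cannot be right (it would force $\kappa(V)\le\kappa(SV)$ unconditionally), so your derivation is the correct one and the printed constant is a typo — just do not claim the division "yields \eqref{eq:condEmbedding}" verbatim.
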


Proposition \ref{prop:embeddingSingularVals} implies that if $SV$ is well conditioned, then so is~$V$.

While $\varepsilon$-subspace embeddings require knowledge of the subspace $\mathcal{V} \subset \mathbb{R}^n$ a priori, \emph{$(\varepsilon, d, m)$ oblivious $\ell_2$-subspace embeddings} do not.

\begin{definition}[$(\varepsilon, d, m)$ oblivious $\ell_2$-subspace embedding] {\rm \cite{Balabanov:2022} }\label{def:obliviousSubspaceEmbedding}
$S \in \mathbb{R}^{p \times n}$ is an \emph{$(\varepsilon, d, m)$ oblivious $\ell_2$-subspace embedding} if it is an $\varepsilon$-subspace embedding for any fixed $m$-dimensional subspace $\mathcal{V} \subset \mathbb{R}^n$ with probability at least $1-d$.
\end{definition}

An example of a $(\varepsilon, d, m)$ oblivious $\ell_2$-subspace embedding is 
$S = \frac{1}{\sqrt{p}}G$ for a fully dense Gaussian matrix $G \in \mathbb{R}^{p \times n}$ and 
\[ 
p = \Omega(\varepsilon^{-2}\log m \log(1/d));
\] 
see, e.g., \cite{Sarlos:2006}. 
Sparse $(\varepsilon, d, m)$ oblivious $\ell_2$-subspace embeddings exist, including CountSketch, 
which consists of a single $\pm 1$ per column, where the row storing the entry and its sign are chosen uniformly at random~\cite{CountSketchOriginal,WoodruffSketching}. In order to be a $(\varepsilon, d, m)$ oblivious $\ell_2$-subspace embedding, the number of columns of the CountSketch matrix must satisfy 
\begin{equation} \label{eq:s}
p \geq \frac{m^2+m}{\varepsilon^2 d}~;
\end{equation}
see \cite{CountSketchSize}.
Other popular $(\varepsilon, d, m)$ oblivious $\ell_2$-subspace embeddings include sub-sampled randomized Hadamard and Fourier transforms, and ``sparse dimension reduction maps'' \cite{Balabanov:2022, Nakatsukasa:2021}, though obtaining high performance with these is difficult, and the complexity of applying them is higher than CountSketch. We do not consider such embeddings in this paper.

\section{Multisketching} \label{multi:sec}
Next, we consider the case of applying two sketch matrices one after the other, which is what we refer to as ``multisketching"
in this paper, generalizing the approach of 
\cite{Kapralov_Potluru_PMLR_16,Sobczyk_Gallopoulos_TOMS_22}, where
one application of a large sparse CountSketch is followed by a smaller dense Gaussian sketch. In these references though,
there is no analysis of stability, as we do here.
The main motivation for this approach is to be able to apply the dense Gaussian sketch to
a smaller matrix, obtained after the application of a sparse sketch, thus significantly reducing the size of the matrix with little computational cost;
see more details on this motivation in Section~\ref{sec:twoSketches}.

We first present the algorithm \texttt{randQR} using this multisketching approach, and then prove
bounds similar to those in Proposition~\ref{prop:embeddingSingularVals} for the case
of two sketches. We emphasize that Algorithm \ref{alg:randQR} will be used as a pre-processing procedure for the final algorithm of interest, Algorithm \ref{ajh:alg:sketchQRChol}, which will form the true QR factorization of a tall-and-skinny matrix. 

Let $V \in \mathbb{R}^{n \times m}$, and suppose $S_1 \in \mathbb{R}^{p_1 \times n}$ and $S_2 \in \mathbb{R}^{p_2 \times p_1}$ are $(\varepsilon_1, d_1, m)$ and $(\varepsilon_2, d_2, m)$ oblivious $\ell_2$-subspace embeddings, respectively. Let $d = d_1+d_2-~d_1d_2$, so that $1-d = (1-d_1)(1-d_2)$. We define the Randomized Householder QR algorithm (\texttt{randQR}) in Algorithm~\ref{alg:randQR}, where we use MATLAB function call notation. 

\begin{algorithm}[h]
         \begin{algorithmic}[1]  
             \vspace{0cm}
             \Statex \textbf{Input:}\phantom{aa} Matrix $V \in \mathbb{R}^{n \times m}$, sketch matrices $S_1 \in \mathbb{R}^{p_1 \times n}$, $S_2 \in \mathbb{R}^{p_2 \times p_1}$
            \Statex {\textbf{Output:} $S_2S_1$-Orthogonal factor $Q \in \mathbb{R}^{n \times m}$, triangular factor $R \in \mathbb{R}^{m \times m}$ {such that $QR=V$.}}
             \vspace{0cm}
             \State Apply sketches $W = S_2S_1V$ 
             \State Perform Householder QR: $[Q_{tmp},R] = \texttt{hhqr}(W)$ 
             \State Recover $S_2S_1$-orthogonal matrix: $Q = VR^{-1}$
         \end{algorithmic}
         \caption{Randomized Householder QR: $[Q,R] = \texttt{randQR}(V,S_1,S_2)$} \label{alg:randQR}
 \end{algorithm}


\begin{remark} \label{Vfullrank:rem}
{\rm
In exact arithmetic, provided that $V \in \mathbb{R}^{n \times m}$ is full rank, then \texttt{randQR} produces a matrix $Q$ that is $S_2S_1$-orthogonal\footnote{In exact arithmetic, $Q$ will only fail to be $S_2S_1$-orthogonal if $V \in \text{null}(S_2S_1)$, which by Proposition \ref{prop:embeddingSingularVals}, occurs with probability at most $d$.} with probability at least $1-d$; i.e., it satisfies $(S_2S_1Q)^T(S_2S_1Q) = I$, because 
$$S_2S_1Q = S_2S_1VR^{-1} = WR^{-1} =~Q_{tmp},$$

\noindent where $Q_{tmp}$ is the orthogonal factor produced by the Householder QR factorization of $W = S_2S_1V$. Observe that $Q$ being $S_2S_1$-orthogonal is equivalent to being an orthonormal matrix with respect to the inner product\footnote{Although $\langle S_2S_1 \cdot, S_2S_1 \cdot \rangle$ is not an inner product over the traditional vector space $\mathbb{R}^{n \times m}$, it is an inner product over the complement of $\text{null}(S_2S_1)$.} $\langle S_2S_1 \cdot, S_2S_1 \cdot \rangle$. Unlike traditional Householder QR, even in exact arithmetic $V$ must have full rank, since step~3 of Algorithm \ref{alg:randQR} requires $\text{rank}(V) = \text{rank}(R) = m$. In finite precision, intuition suggests that an inevitable requirement of \texttt{randQR} is that $V$ must be numerically full rank (i.e., $\kappa(V) \lessapprox \textbf{u}^{-1}$). 
}
\end{remark}

Next, we introduce some convenient norm, singular value, and condition number inequalities when one uses the multisketching approach with two oblivious $\ell_2$-subspace embeddings.

\begin{proposition}\label{cor:subspaceEmbeddingMultSketch}
    Let $S_1 \in \mathbb{R}^{p_1 \times n}$ be a $(\varepsilon_1, d_1, m)$ oblivious $\ell_2$-subspace embedding in $\mathbb{R}^n$, $S_2 \in \mathbb{R}^{p_2 \times p_1}$ be a $(\varepsilon_2, d_2, m)$ oblivious $\ell_2$-subspace embedding in $\mathbb{R}^{p_1}$, generated independently. 
    Let $\varepsilon_L = \varepsilon_1+\varepsilon_2-\varepsilon_1\varepsilon_2$, $\varepsilon_H = \varepsilon_1+\varepsilon_2+\varepsilon_1\varepsilon_2$, and $d = d_1+d_2-d_1d_2$. Then for any $m$-dimensional subspace $\mathcal{V}\subset \mathbb{R}^n$ and $\forall x \in \mathcal{V}$, 
    \begin{equation}
         \sqrt{1-\varepsilon_L}~\|x\|_2 \leq \|S_2S_1x\|_2 \leq \sqrt{1+\varepsilon_H}~\|x\|_2, \label{eq:normEmbeddingMultiSketch} 
    \end{equation}
    with probability at least $1-d$.
\end{proposition}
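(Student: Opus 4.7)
The plan is to apply the embedding property of $S_1$ and $S_2$ sequentially, exploiting their independence so that the bad events for each sketch can be combined via a union-bound style argument. First I would invoke obliviousness of $S_1$: since $\mathcal{V}$ is a fixed $m$-dimensional subspace of $\mathbb{R}^n$, with probability at least $1-d_1$ the matrix $S_1$ is an $\varepsilon_1$-subspace embedding for $\mathcal{V}$, and Proposition \ref{prop:embeddingNorm} then yields $\sqrt{1-\varepsilon_1}\,\|x\|_2 \leq \|S_1 x\|_2 \leq \sqrt{1+\varepsilon_1}\,\|x\|_2$ for every $x\in\mathcal{V}$. Because $\varepsilon_1<1$, the lower bound guarantees $S_1$ is injective on $\mathcal{V}$, so $\mathcal{W} := S_1(\mathcal{V})$ is an $m$-dimensional subspace of $\mathbb{R}^{p_1}$ on this event.

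Next I would condition on $S_1$: the subspace $\mathcal{W}$ is now a fixed $m$-dimensional subspace of $\mathbb{R}^{p_1}$, and since $S_2$ is drawn independently of $S_1$, the obliviousness of $S_2$ applies to $\mathcal{W}$ with parameters $(\varepsilon_2, d_2, m)$. Hence with conditional probability at least $1-d_2$ we get $\sqrt{1-\varepsilon_2}\,\|S_1 x\|_2 \leq \|S_2 S_1 x\|_2 \leq \sqrt{1+\varepsilon_2}\,\|S_1 x\|_2$ for every $x\in\mathcal{V}$. Multiplying the two chains of inequalities gives
\[
\sqrt{(1-\varepsilon_1)(1-\varepsilon_2)}\,\|x\|_2 \;\leq\; \|S_2 S_1 x\|_2 \;\leq\; \sqrt{(1+\varepsilon_1)(1+\varepsilon_2)}\,\|x\|_2,
\]
and expanding the products identifies $(1-\varepsilon_1)(1-\varepsilon_2) = 1-\varepsilon_L$ and $(1+\varepsilon_1)(1+\varepsilon_2) = 1+\varepsilon_H$, which is the claimed two-sided norm bound.

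Finally I would combine the probabilities. By independence of $S_1$ and $S_2$, both good events occur simultaneously with probability at least $(1-d_1)(1-d_2) = 1-d$, matching the stated failure probability.

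The main subtle point — and really the only place one has to be careful — is that $\mathcal{W}=S_1(\mathcal{V})$ is itself a random subspace, so naive application of the obliviousness of $S_2$ is not quite justified. This is resolved cleanly by conditioning on $S_1$ (which makes $\mathcal{W}$ deterministic) and using independence to transfer the conditional bound into an unconditional one; verifying that $\mathcal{W}$ is genuinely $m$-dimensional (needed so that obliviousness at dimension $m$ applies) is exactly what the lower bound from $S_1$ provides. All remaining steps are algebraic manipulations of $(1\pm\varepsilon_1)(1\pm\varepsilon_2)$ and $(1-d_1)(1-d_2)$.
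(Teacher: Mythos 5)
Your proof is correct and follows essentially the same route as the paper's: apply the embedding property of $S_1$ to $\mathcal{V}$ and of $S_2$ to $S_1\mathcal{V}$, chain the two-sided norm inequalities, and multiply the success probabilities using independence. In fact you are somewhat more careful than the paper on the one genuinely delicate point --- conditioning on $S_1$ so that $S_1(\mathcal{V})$ is a fixed $m$-dimensional subspace (with dimension preserved because $\varepsilon_1<1$ forces injectivity) before invoking the obliviousness of $S_2$ --- which the paper's proof passes over implicitly.
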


\begin{proof}
    Let $x\in \mathcal{V}$. Then, $S_1\mathcal{V} \subset \mathbb{R}^{p_1}$. By assumption, $S_2$ is a $(\varepsilon_2,d_2,m)$ oblivious $\ell_2$-subspace embedding, and thus it is an $\varepsilon_2$-subspace embedding of $S_1\mathcal{V}$ with probability at least $1-d_2$. Observe that $S_1x \in S_1\mathcal{V}$. Therefore, by \eqref{eq:normEmbedding},
\[  \sqrt{1-\varepsilon_2} \|S_1x\|_2 \\ \leq \|S_2S_1x\|_2 \leq \sqrt{1+\varepsilon_2} \|S_1x\|_2 ,\]
with probability at least $1-d_2$, because this is the probability at which \eqref{eq:normEmbedding} holds.

Again, by assumption, $S_1$ is a $(\varepsilon_1,d_1,m)$ oblivious $\ell_2$-subspace embedding, and thus it is an $\varepsilon_1$-subspace embedding of $\mathcal{V} \subset \mathbb{R}^{n}$ with probability at least $1-d_1$. Now, using \eqref{eq:normEmbedding} again for $S_1$ and $\epsilon_1$, we have
\[
\sqrt{1-\varepsilon_1}\|x\|_2 \leq \|S_1x\|_2 \leq \sqrt{1+\varepsilon_1}\|x\|_2,
\]
with probability at least $1-d_1$. 

Combining these results, we find that
\begin{align*}
    \sqrt{1-(\varepsilon_1+\varepsilon_2-\varepsilon_1\varepsilon_2)} \|x\|_2 &= \sqrt{(1-\varepsilon_2)(1-\varepsilon_1)}\|x\|_2 \leq \sqrt{1-\varepsilon_2} \|S_1x\|_2 \\ &\leq \|S_2S_1x\|_2 \leq \sqrt{1+\varepsilon_2} \|S_1x\|_2 \\
    &\leq \sqrt{(1+\varepsilon_2)(1+\varepsilon_1)}\|x\|_2 \\
    &= \sqrt{1+(\varepsilon_1+\varepsilon_2+\varepsilon_1\varepsilon_2)} \|x\|_2
\end{align*}
with probability at least $(1-d_1)(1-d_2) = 1-(d_1+d_2-d_1d_2)$. 

Proving $d$ and consequently $1-d$ are between $[0,1]$ is equivalent to showing $p(d_1,d_2) = d_1+d_2-d_1d_2 \in [0,1]$ for any $(d_1,d_2) \in~[0,1]^2$. This is straightforward, as on the boundaries, $p(0,d_2) = d_2 \in~[0,1]$, $p(d_1,0) = d_1 \in [0,1]$, \linebreak $p(1,d_2) = p(d_1,1) = 1 \in [0,1]$, and $\nabla p \geq 0$ on $[0,1]^2$, and therefore $p(d_1,d_2)$ cannot go below $0$ or above $1$.
\end{proof}

\noindent If $S_1, S_2$ are $\varepsilon_1, \varepsilon_2$ embeddings respectively, then by Corollary \ref{cor:embeddingNorm} along with Propositions \ref{prop:embeddingSingularVals} and  \ref{cor:subspaceEmbeddingMultSketch},
\begin{align}
    \sqrt{1-\varepsilon_L} \|V\|_2 &\leq \|S_2S_1V\|_2 \leq \sqrt{1+\varepsilon_H} \|V\|_2, \label{eq:S2S1V2Nrmcor} \\
        \sqrt{1-\varepsilon_L} \|V\|_F &\leq \|S_2S_1V\|_F \leq \sqrt{1+\varepsilon_H} \|V\|_F,\label{eq:S2S1VfroNrmcor} \\
        \sqrt{1-\varepsilon_L}~\sigma_{min}(V) &\leq \sigma_{min}(S_2S_1V) \leq \sqrt{1+\varepsilon_H}~\sigma_{max}(V), \label{eq:S2S1VsingValcor}
\end{align}
\vspace*{-7mm}
\begin{align}
    (1+\varepsilon_H)^{-1/2}~\sigma_{min}(S_2S_1V) &\leq \sigma_{min}(V) \leq \sigma_{max}(V) \label{eq:multSketchSingValEmbedding_0} \\
    &\leq (1-\varepsilon_L)^{-1/2}~\sigma_{max}(S_2S_1V), \nonumber 
\end{align}
and so,
\begin{equation}
    \kappa(V) \leq \sqrt{\frac{1-\varepsilon_L}{1+\varepsilon_H}}~\kappa(S_2S_1V). \label{eq:MultSketchCondEmbedding}
\end{equation} 

\begin{remark} \label{rem:condQrandQR}
    { \rm
    By Remark \ref{Vfullrank:rem}, in exact arithmetic, the $Q$ factor computed by \texttt{randQR} is $S_2S_1$-orthogonal, and therefore, by \eqref{eq:MultSketchCondEmbedding},
    \begin{equation}
    \kappa(Q) \leq \sqrt{\frac{1-\varepsilon_L}{1+\varepsilon_H}}~\kappa(S_2S_1Q) = \sqrt{\frac{1-\varepsilon_L}{1+\varepsilon_H}} = O(1)~. \label{eq:condQrandQR}
    \end{equation}
    Thus, \texttt{randQR} serves well for applications where a well-conditioned set of vectors is sufficient, or as a pre-processing algorithm for less stable orthogonalization schemes.
    }
\end{remark}

\subsection{Algorithms and Performance Considerations} \label{sec:flopsAndIdea}
We introduce the main algorithm of interest for this paper, \texttt{rand\_cholQR} \linebreak (Algorithm \ref{ajh:alg:sketchQRChol}) and compare its performance to \texttt{cholQR2}
\cite{CholeskyQR2Def} (Algorithm~\ref{ajh:alg:cholQR2} below). In this paper, \texttt{randQR} is strictly used to precondition the tall-and-skinny matrix $V$  as a pre-processing step for \texttt{rand\_cholQR}, which is a true orthogonalization scheme. 

As a proof of concept for why \texttt{rand\_cholQR} (Algorithm \ref{ajh:alg:sketchQRChol}) should be expected to form a reasonable QR factorization, in step 1, the algorithm computes a $S_2S_1$-orthogonal factor $Q_0$ from \texttt{randQR} (Algorithm \ref{alg:randQR}). By Remark \ref{rem:condQrandQR}, in exact arithmetic,
$\kappa(Q_0) = O(1)$.
In step 2 of \texttt{rand\_cholQR}, $Q$ is computed by re-orthogonalizing $Q_0$ using Cholesky QR (\texttt{cholQR}, Algorithm \ref{ajh:alg:cholQR}). Since $\kappa(Q_0) = O(1)$, one can expect the resulting $Q$ satisfies $\|Q^TQ - I\|_2 = O(\textbf{u})$ using the roundoff error analysis of \texttt{cholQR} \cite{CholeskyQR2ErrAnalysis}. 

\begin{algorithm}
         \begin{algorithmic}[1]  
             \vspace{0cm}
             \Statex \textbf{Input:}\phantom{aa} Matrix $V \in \mathbb{R}^{n \times m}$
            \Statex {\textbf{Output:} Orthogonal factor $Q \in \mathbb{R}^{n \times m}$, triangular factor $R \in \mathbb{R}^{m \times m}$ such that $QR=V$.}
             \vspace{0cm}
             \State Compute Gram matrix $G = V^TV$
             \State Perform Cholesky on $G$:  $R = \texttt{chol}(G)$, where $G = R^TR$
             \State Recover orthogonal matrix: $Q = VR^{-1}$
         \end{algorithmic}
         \caption{Cholesky QR: $[Q,R] = \texttt{cholQR}(V)$} \label{ajh:alg:cholQR}
 \end{algorithm}

\begin{algorithm}
         \begin{algorithmic}[1]  
             \vspace{0cm}
             \Statex \textbf{Input:}\phantom{aa} Matrix $V \in \mathbb{R}^{n \times m}$, sketch matrices $S_1 \in \mathbb{R}^{p_1 \times n}$, $S_2 \in \mathbb{R}^{p_2 \times p_1}$
            \Statex {\textbf{Output:} Orthogonal factor $Q \in \mathbb{R}^{n \times m}$, triangular factor $R \in \mathbb{R}^{m \times m}$ such that $QR=V$.}
             \vspace{0cm}
             \State Recover $S_2S_1$-orthogonal matrix $Q_0$: $[Q_0,R_0] = \texttt{randQR}(V,S_1,S_2)$
             \State Perform Cholesky QR on $Q_0$: $[Q,R_1] = \texttt{cholQR}(Q_0)$
             \State Return $R$: $R = R_1R_0$
         \end{algorithmic}
         \caption{Rand. Householder-Cholesky: $[Q,R]~=~\texttt{rand\_cholQR}(V,S_1,S_2)$} \label{ajh:alg:sketchQRChol}
 \end{algorithm}

Next, we examine the expected performance of \texttt{randQR} and \texttt{rand\_cholQR} by first discussing their communication costs compared to \texttt{cholQR} and \texttt{cholQR2} (Algorithm \ref{ajh:alg:cholQR2}) respectively, and then analyze their arithmetic costs. The most computationally intensive parts of \texttt{randQR} (steps 1 and 3) are nearly identical to those of \texttt{cholQR}, in the sense that both perform a product of tall and skinny matrices, followed by a triangular solve of a tall and skinny matrix. Similar to the way \texttt{cholQR} requires only one processor synchronization total to compute the Gram matrix in step 1 of Algorithm \ref{ajh:alg:cholQR}, \texttt{randQR} only requires one synchronization total to compute $W$ in step 1 of Algorithm \ref{alg:randQR} provided $m \leq p_2 \leq p_1 \ll n$ and therefore the algorithms incur the same number of processor synchronizations\footnote{Specifically, suppose one has $p$ parallel processes and $m \leq p_2 \leq p_1 \ll n$ so that \linebreak $S_2 \in \mathbb{R}^{p_2 \times p_1}$ can be stored locally on each process. One can distribute block row partitions of $V = [V_1^T, \dots, V_p^T]^T$  and block column partitions of the larger sketch $S_1 = [(S_1)_1,\dots, (S_1)_p]$ to each of the processes, along with the entire small sketch $S_2$ to each process. Then on process $k$, one computes $W_k = S_2(S_1)_kV_k$, and then one synchronizes the processes to compute $W = S_2S_1V = \sum_{k=1}^p W_k$ in a single reduction.}. Moreover, \texttt{rand\_cholQR} and \texttt{cholQR2} simply build on these algorithms, adding passes of \texttt{cholQR} to matrices of the same size for both algorithms. Thus, like \texttt{cholQR2}, \texttt{rand\_cholQR} only requires two synchronizations total.

\begin{algorithm}
         \begin{algorithmic}[1]  
             \vspace{0cm}
             \Statex \textbf{Input:}\phantom{aa} Full rank matrix $V \in \mathbb{R}^{n \times m}$
            \Statex {\textbf{Output:} Orthogonal factor $Q \in \mathbb{R}^{n \times m}$, triangular factor $R \in \mathbb{R}^{m \times m}$}
             \vspace{0cm}
             \State Perform Cholesky QR on $W$: $[Q_0,R_0] = \texttt{cholQR}(V)$
             \State Perform Cholesky QR on $Q_0$: $[Q,R_1] = \texttt{cholQR}(Q_0)$
             \State Return $R$: $R = R_1R_0$ 
         \end{algorithmic}
         \caption{CholeskyQR2: $[Q,R] = \texttt{cholQR2}(V)$} \label{ajh:alg:cholQR2}
 \end{algorithm}
 
Next, we consider the computational costs of the algorithms. The computational cost of step 2 of \texttt{randQR} (Algorithm \ref{alg:randQR}) is negligible compared to steps~1 and~3, since $W \in \mathbb{R}^{p_2 \times m}$ with $p_2 \ll n$. 
The arithmetic cost of step 1 is dependent on the type of sketch matrices used. Suppose one replaces $S_2S_1$ with a single dense Gaussian sketch matrix $S \in \mathbb{R}^{p \times n}$, which is conceptually simple, very efficient in parallel, but computationally expensive since it is very dense. Then the arithmetic cost of \texttt{randQR} and \texttt{rand\_cholQR} (in FLOPs) are:
\begin{equation*}
 \texttt{randQR}\text{ FLOPs: } \underbrace{pm(2n-1)}_\text{Sketching} + \underbrace{2pm^2-\frac{2}{3}m^3}_\text{Householder QR} + \underbrace{nm^2}_\text{Tri. solve} \approx 2nmp + nm^2.
\end{equation*}
\begin{equation*}
\texttt{rand\_cholQR} \text{ FLOPs: } \underbrace{2nmp + nm^2}_\texttt{randQR} + \underbrace{2nm^2}_\texttt{cholQR} + \underbrace{m^2(2m-1)}_\text{Matrix mult.} \approx 2nmp + 3nm^2 .
\end{equation*}
Provided that $p = O(m)$, e.g., $p \approx 2m$, then $\texttt{rand\_cholQR} \text{ FLOPs} \approx 7nm^2$. 

In contrast, CholeskyQR2 (\texttt{cholQR2}, Algorithm~\ref{ajh:alg:cholQR2}) incurs a~cost~of
\begin{equation*}
\texttt{cholQR2} \text{ FLOPs: } \underbrace{2nm^2}_\texttt{cholQR} + \underbrace{2nm^2}_\texttt{cholQR} + \underbrace{m^2(2m-1)}_\text{Matrix mult.} \approx 4nm^2 .
\end{equation*}

Thus, $\texttt{randQR}$ (using a dense Gaussian sketch) and $\texttt{cholQR}$ have about the same asymptotic arithmetic costs. Because the two algorithms have the same communication costs and \texttt{rand\_cholQR} has a slightly higher arithmetic cost, in a large scale parallel setting, one can expect \texttt{rand\_cholQR} to run  slightly slower but on the same order of runtime as \texttt{cholQR2} and scale in the same way. However, as we show in Section \ref{sec:stateKeyResults}, \texttt{rand\_cholQR} is significantly more stable with high probability.

\subsection{Motivation for Multisketching}\label{sec:twoSketches}
To motivate the use of multisketching, we first discuss a few straightforward options using a single sketch matrix. Using a single Gaussian sketch requires a dense matrix-matrix multiply with a sketch matrix $S$ of dimension $p \times n$. In addition to performing $O(nmp)$ FLOPs to apply this sketch, we need to store and load this completely dense $p \times n$ sketch matrix. As shown in Section \ref{sec:flopsAndIdea}, the time to sketch the matrix with the dense Gaussian can dominate the total factorization time for \texttt{randQR} and consequently \texttt{rand\_cholQR}. 

One can reduce the sketching cost using a sparse sketch such as a CountSketch matrix~\cite{CountSketchOriginal}. Since the CountSketch matrix has only one non-zero per column, the cost of applying the CountSketch matrix to $V \in \mathbb{R}^{n \times m}$ is only $O(nm)$, and it requires the storage of only $O(n)$ numerical values. Additionally, CountSketch can be implemented using the sparse-matrix multiple-vector multiply (SpMM), whose optimized implementation is often available on  specific architectures.  A clever implementation can exploit the fact that applying the CountSketch matrix is equivalent to adding/subtracting subsets of rows of $V$, and can therefore be parallelized well using batched BLAS-1 kernels or a highly-optimized sparse linear algebra library.  Hence, high performance implementations of CountSketch can be achieved using only readily available linear algebra libraries. However, a CountSketch matrix requires a sketch size of $p = O(m^2)$ to maintain the $\varepsilon$-embedding properties, so \texttt{randQR}/\texttt{rand\_cholQR} requires one to factorize $W \in \mathbb{R}^{O(m^2) \times m}$ with Householder QR, which incurs $O(m^4)$ FLOPs. In contrast, a sketch size of $p = O(m)$ ensures the Gaussian sketch is an $\varepsilon$-subspace embedding, meaning the cost of the Householder QR factorization is only $O(m^3)$ FLOPs. Householder QR imposes high communication costs and does not parallelize well \cite{tsqrPerf}. As a result, on current computers, it obtains much lower performance than the BLAS-3 operations like the dense matrix product (\texttt{gemm}), and these $O(m^4)$ FLOPs for Householder QR become a performance bottleneck for sufficiently large $m$. 

Ideally, we want an embedding that offers low computational and storage costs like CountSketch, while returning a sketched matrix $W \in \mathbb{R}^{p \times m}$ with $p = O(m)$ like the Gaussian sketch does, to avoid a performance bottleneck from Householder QR. This is possible by using the multisketching framework with first a sparse CountSketch and then a Gaussian sketch. To see this, suppose $S_1 \in \mathbb{R}^{p_1 \times n}$ is a CountSketch matrix with $p_1 = \frac{m^2+m}{\varepsilon_1^2d_1}$, cf.~(\ref{eq:s}), and suppose $S_2 \in \mathbb{R}^{p_2 \times p_1}$ is a Gaussian sketch where $p_2 = 2m$. 

We split the computation of $W = S_2S_1V$ into two steps: first computing $W_1 = S_1V$, then $W = S_2W_1$. Storing $S_1$ only requires $O(n)$ bytes of memory, and the sparse matrix product $W_1 = S_1 V$ costs $O(nm)$ FLOPs. The cost to compute $W = S_2W_1$ costs $O(m^4)$ FLOPs, but since the dense matrix product (\texttt{gemm}) obtains much higher performance than the Householder QR, this cost became negligible in our performance studies with a GPU. The storage of $S_2$ only requires $O(m^3)$ bytes of memory, and the Householder QR factorization of the $O(m) \times m$ matrix $W$ incurs negligble computational cost as well.

Moreover, the $O(nm+m^4)$ total FLOPs incurred by \texttt{randQR} using the multisketch framework can actually be lower than the $O(nm^2)$ FLOPs required to perform \texttt{cholQR}, making \texttt{rand\_cholQR} sometimes cheaper than \texttt{cholQR2} under the multisketch framework while incurring the same number of communications (as discussed in Section \ref{sec:flopsAndIdea}). Thus, the multisketch framework provides an avenue for an extremely efficient, stable QR factorization that can potentially outperform \texttt{cholQR2} in terms of both stability and practical speed on modern parallel machines.

\section{Error Analysis of \texttt{randQR} and \texttt{rand\_cholQR}} \label{sec:errAnalysis}

In this section we present the main results of this work on theoretical properties (with high probability) of $\hat{Q}$ and $\hat{R}$ computed by \texttt{randQR} and \texttt{rand\_cholQR}. The structure of the section is as follows. First, we highlight the sources of floating point error of the \texttt{randQR} algorithm in Section \ref{sec:errSrc}. Then, in Section \ref{appendix1}, we introduce our assumptions for the proofs and some preparatory results for the error analysis. 

We identify which results are probabilistic, and explicitly state the necessary assumptions and some useful initial consequences in Sections \ref{sec:assump}--\ref{sec:probRes}. 
In Sections \ref{sec:matMulErr}--\ref{sec:backErrRepErr}, we analyze how errors propagate through each step of \texttt{rand\_cholQR}.
Finally, in Section \ref{sec:stateKeyResults}, we provide the key theorems on the stability and accuracy of our \texttt{randQR} and \texttt{rand\_cholQR} algorithms, and prove them primarily through the preparatory results from Section \ref{appendix1}. 
Some readers may want to go directly to Section~\ref{sec:stateKeyResults} for our main results.

\subsection{Sources of Floating Point Error in \texttt{randQR}} \label{sec:errSrc}

We use a hat to denote a computed version of each of the matrix in all algorithms. First, errors are incurred when performing the matrix products $W = S_2S_1V$ in step~1 of Algorithm \ref{alg:randQR}. Specifically, there exist error terms $\Delta \hat{W}_1$, $\Delta \hat{W}$ such that
\begin{align}
\hat{W}_1 &= S_1V+\Delta \hat{W}_1, \nonumber \\
\hat{W} &= S_2\hat{W}_1 + \Delta \hat{W} ~\cdot \label{eq:hatWdef}
\end{align}
We can group these error terms together so that the computed $\hat{W}$ satisfies
\begin{align}
    \hat{W} &= S_2S_1V + E_1, \label{eq:Weq}
\end{align}
where $E_1 = S_2\Delta \hat{W}_1 + \Delta \hat{W}$. The error term $E_1$ is analyzed in Section \ref{sec:matMulErr}.

Applying Householder QR to $\hat{W}$ in step 2 incurs error $E_2$. Only the triangular factor $\hat{R}$ is needed, so some (exactly) orthogonal $Q_{tmp}$ exists such that
\begin{align}
    Q_{tmp} \hat{R} &= \hat{W} + E_2 = S_2S_1V + E_1 + E_2. \label{eq:QRrep} 
\end{align}
Analysis of $E_2$ is provided in Section \ref{sec:BackErrhhqr}.

In step 3, solving the triangular system $Q\hat{R} = V$ also creates errors. These are analyzed in a row-wise fashion in Section \ref{sec:backErrFwdSub}, taking the form
\begin{align}
    \hat{Q}_{i,:} &= V_{i,:}(\hat{R} + \Delta\hat{R}_i)^{-1} \quad (i = 1, 2, \dots m), \label{eq:RbackwardErr}
\end{align}
where $\hat{Q}_{i,:}$ and $V_{i,:}$ denote the $i^{th}$ rows of $\hat{Q}$ and $V$, respectively, and $\Delta \hat{R}_i$ is an error term incurred during the solution of the triangular systems. Finally, we recast the errors incurred in step 3 as $\hat{Q} = (V+\Delta \tilde{V})\hat{R}^{-1}$ in Sections \ref{sec:2nrmRinv}--\ref{sec:backErrRepErr}, which simplifies the analysis of the orthogonality of $\hat{Q}$ in Section \ref{sec:stateKeyResults}.

\subsection{Assumptions and Preparatory Results for our Proofs}
\label{appendix1}
Let $V \in \mathbb{R}^{n\times m}$, $n \gg m$, and suppose $S_1 \in \mathbb{R}^{p_1 \times m}$ and $S_2 \in \mathbb{R}^{p_2 \times p_1}$ are $(\varepsilon_1, d_1, m)$ and $(\varepsilon_2, d_2, m)$ oblivious $\ell_2$-subspace embeddings, respectively, generated independently. Define $d = d_1+d_2-d_1d_2$, $\varepsilon_L = \varepsilon_1+\varepsilon_2-\varepsilon_1\varepsilon_2$, $\varepsilon_H = \varepsilon_1+\varepsilon_2+\varepsilon_1\varepsilon_2$.

\subsubsection{Assumptions} \label{sec:assump}
 For the sake of organization, we define a set of assumptions stating that $V$ is sufficiently numerically full rank (i.e., $\kappa(V) \lessapprox \textbf{u}^{-1}$), $n \gg m$, and that the sketch matrices $S_1,S_2$ simultaneously satisfy the subspace embedding properties, ensuring equations \eqref{eq:normEmbedding}--\eqref{eq:condEmbedding}, \eqref{eq:normEmbeddingMultiSketch}--\eqref{eq:MultSketchCondEmbedding} hold with probability at least $1-d$. We also impose an assumption that $\epsilon_L$ is sufficiently--but need not be too far--below 1, to obtain a positive lower bound on $\sigma_m(\hat{Q})$ while maintaining as general of a result as possible.
\begin{assumption}
Suppose $S_1 \in \mathbb{R}^{p_1 \times m}$ and $S_2 \in \mathbb{R}^{p_2 \times p_1}$ are $(\varepsilon_1, d_1, m)$ and $(\varepsilon_2, d_2, m)$ oblivious $\ell_2$-subspace embeddings respectively, generated independently. Define $d = d_1+d_2-d_1d_2$, $\varepsilon_L = \varepsilon_1+\varepsilon_2-\varepsilon_1\varepsilon_2$, $\varepsilon_H = \varepsilon_1+\varepsilon_2+\varepsilon_1\varepsilon_2$, where 
$$\varepsilon_L \in \left[0,\frac{616}{625}-\frac{9}{625}\varepsilon_H \right).$$ 
Further, suppose $V \in \mathbb{R}^{n \times m}$ has full rank and $1 < m \leq p_2 \leq p_1 \leq n$ where $nm\textbf{u} \leq \frac{1}{12}$, $p_1\sqrt{p_2}\textbf{u} \leq \frac{1}{12}$, and
\begin{equation}
    \delta = \frac{383\left( \sqrt{1+\varepsilon_H}~p_2m^{3/2} + \sqrt{m}\|S_2\|_2(p_1\sqrt{p_2}\sqrt{1+\varepsilon_1}+n\|S_1\|_F)\right) }{\sqrt{1-\varepsilon_L}}\textbf{u}~\kappa(V) \leq 1 ~. \label{eq:delta}
\end{equation}
\label{assump:theoremAssumptions}
\end{assumption} 

The assumption that the integers $m, p_2, p_1, \text{ and } n$ have the ordering
\begin{equation}
    1 < m \leq p_2 \leq p_1 \leq n \label{eq:s1assumption}
\end{equation}
is logical, otherwise the embeddings $S_2 \in \mathbb{R}^{p_2 \times p_1}$ and $S_1 \in \mathbb{R}^{p_1 \times n}$ project $V$ into a larger space, defeating the purpose of sketching. Further, we assume
\begin{equation}
    nm\textbf{u} \leq \frac{1}{12} \quad \text{ and } \quad p_1\sqrt{p_2}\textbf{u} \leq \frac{1}{12}~, \label{eq:cnmuBound}
\end{equation}
which are not directly implied by \eqref{eq:delta}, but, depending of the values of $p_1$, $p_2$, $m$, $\|S_1\|_F$, and $\|S_2\|_2$, typically are consequences of it.
Note that for the usual case of $\textbf{u} = 2^{-52} \approx 10^{-16}$,
the assumption \eqref{eq:cnmuBound} is easily satisfied, e.g,. with $n=10^{12}$, $m=10^2$.

As is customary in error analysis calculations, e.g., as in 
\cite{fm1967,HighamNumAlg,wi1965}, we
define for any positive integer $k$
\begin{equation} \label{eq:gamma_k}
    \gamma_k := \frac{k\textbf{u}}{1-k\textbf{u}} ~\cdot
\end{equation}
Provided $k\textbf{u} < \frac{1}{11}$, it follows that $\gamma_k \leq 1.1k\textbf{u}$. In particular, since $\kappa(V) \geq 1$, we can deduce from \eqref{eq:delta} that 
the constants that we use in our analysis are bounded as follows,
\begin{equation}
p_2m^{3/2}\textbf{u} \leq \frac{1}{383} < \frac{1}{11}~,\label{eq:cnubounds}
\end{equation}
so \eqref{eq:s1assumption}--\eqref{eq:cnubounds} imply
\begin{align}
\gamma_n \leq 1.1n\textbf{u}, \text{ } \gamma_{m} &\leq 1.1m\textbf{u}, \text{ } \gamma_{p_1} \leq 1.1p_1\textbf{u}, \text{ } \gamma_{p_2m} \leq 1.1p_2m\textbf{u}, \nonumber \\
\gamma_{29p_2m} &\leq 31.9p_2m\textbf{u} < \frac{383}{12} p_2m\textbf{u} ~, \label{eq:gammaBounds}
\end{align}
and
\begin{equation}
    1 + \gamma_n < 1.1. \nonumber 
\end{equation}
Observe that by \eqref{eq:delta}, it follows that $383p_2m^{3/2}\textbf{u} \leq 1$. This
implies that
\begin{equation}
  \gamma_{29p_2m} \leq 31.9p_2m^{3/2}\textbf{u} < \frac{383}{12} p_2m^{3/2} \textbf{u} \leq \frac{1}{12}, \label{eq:cp2mbound} 
\end{equation}
and so
\begin{equation}
    1 + 1.1p_2m^{3/2}\textbf{u} \leq 1 + 31.9p_2m^{3/2}\textbf{u}  \leq  1 + \frac{1}{12} < 1.1 ~. \label{eq:nassumption}
\end{equation}

Finally, we will repeatedly use well-known bounds relating the $\ell_2$ and Frobenius norms,
\begin{align}
    \|X\|_2 \leq \|X \|_F &\leq \sqrt{m}\|X\|_2, \label{eq:ell2FrobNormRelation} \\
    \|XY\|_F &\leq \|X\|_2 \|Y\|_F, \quad \text{for any } X \in \mathbb{R}^{n \times m}, Y \in \mathbb{R}^{m \times k}. \label{eq:FrobNormProduct} 
\end{align}

\begin{remark}
    { \rm
    
    Note that instances of $\|S_1\|_F$ and $\|S_2\|_2$ in \eqref{eq:delta} do not dominate the bound on $\kappa(V)$ imposed by \eqref{eq:delta}.  If $S_1 \in \mathbb{R}^{p_1 \times n}$ is an unscaled CountSketch and $S_2 \in \mathbb{R}^{p_2 \times p_1}$ is a scaled Gaussian sketch, there is a deterministic bound
\[
\|S_1\|_2 \leq \|S_1\|_F \leq \sqrt{n},
\]
and there is a probabilistic bound that there is some constant $C$ such that
\[
    \|S_2\|_2 \leq 1 + C\left(\sqrt{\frac{p_1}{p_2}} + \frac{3}{\sqrt{p_2}}\right),
\]
with probability at least $1 - 2e^{-9} \approx 0.9998$ \cite{Vershynin_2018}. Thus, in the case of $p_1 = O(m^2)$ and $p_2 = O(m)$, it follows that $\|S_1\|_2 = O(\sqrt{n})$ and $\|S_2\|_2 = O(\sqrt{m})$ with very high probability. Therefore, condition \eqref{eq:delta} ultimately requires $$\delta~\leq~g(n,m,p_1,p_2)\textbf{u}~\kappa(V) \leq 1,$$ where $g$ is some low-degree polynomial for reasonable choices of sketches $S_1$, $S_2$.
}
\end{remark}

\subsubsection{Notes on Probabilistic Results} \label{sec:probRes}

While some bounds constructed throughout the proofs of our results are deterministic, several are probabilistic. Here, we address specifically which equations are not deterministic, their prerequisite assumptions, and the probabilities with which they hold.

Throughout the proofs, it is assumed that $S_1$ embeds the column space of $V$ and $S_2$ embeds the column space of $S_1V$ simultaneously, which happens with probability at least $1-d = (1-d_1)(1-d_2)$ because $S_1$ and $S_2$ are independently generated $(\varepsilon_1, d_1, m)$ and $(\varepsilon_2, d_2, m)$ oblivious $\ell_2$-subspace embeddings respectively. Therefore, by \eqref{eq:SV2Nrmcor}, \eqref{eq:SVfroNrmcor}, \eqref{eq:S2S1V2Nrmcor}--\eqref{eq:S2S1VsingValcor}, and \eqref{eq:S2S1VsingValbound},
\begin{align}
    \sqrt{1-\varepsilon_1}\|V\|_2 &\leq \|S_1V\|_2 \leq \sqrt{1 + \varepsilon_1}\|V\|_2 \nonumber \\ 
    \sqrt{1-\varepsilon_L}\|V\|_2 & \leq \|S_2S_1V\|_2 \leq \sqrt{1 + \varepsilon_H}\|V\|_2 \nonumber  \\ 
    \sqrt{1-\varepsilon_1}\|V\|_F &\leq \|S_1V\|_F \leq \sqrt{1 + \varepsilon_1}\|V\|_F \label{eq:S1VFrobound}\\
    \sqrt{1-\varepsilon_L}\|V\|_F & \leq \|S_2S_1V\|_F \leq \sqrt{1 + \varepsilon_H}\|V\|_F , \label{eq:S2S1VFrobound} \\
    \sqrt{1-\varepsilon_L}~\sigma_{min}(V) &\leq \sigma_{min}(S_2S_1V) \leq \sqrt{1+\varepsilon_H}~\sigma_{max}(V), \label{eq:S2S1VsingValbound}  
\end{align}
and
\begin{align}
    (1+\varepsilon_H)^{-1/2}~\sigma_{min}(S_2S_1V) &\leq \sigma_{min}(V) \leq \sigma_{max}(V) \label{eq:multSketchSingValEmbedding} \\
    &\leq (1-\varepsilon_L)^{-1/2}~\sigma_{max}(S_2S_1V), \nonumber 
\end{align}
along with the analogous statements for matrices whose column spaces are identical to $V$, will simultaneously hold with probability at least $1-d$. Specifically, this implies equations \eqref{eq:propOneEmbedResult}, \eqref{eq:Whatbound}, \eqref{eq:Rbound2Norm}, \eqref{eq:Rsvbound}, \eqref{eq:VRinvBound}, and \eqref{eq:VRinvSingularValBound} simultaneously hold with probability at least $1-d$, which are used to build all of the results from \eqref{eq:propOneEmbedResult}--\eqref{eq:SDelVRinvBoundNoAssump} that are prerequisite to prove Theorems \ref{thm:randQRorthpt2}--\ref{thm:randCholQRcond}, all of which hold with high probability.


\subsubsection{Forward Error in matrix-matrix multiplication $S_2S_1V$} \label{sec:matMulErr}

By \cite[Section 3.5]{HighamNumAlg}, for $A \in \mathbb{R}^{m \times n}$, $B \in \mathbb{R}^{n \times k}$, $C = AB$ executed in floating point satisfies
\begin{equation*}
    \hat{C} = AB + \Delta C, \quad |\Delta C| < \gamma_n |A||B|~,\label{eq:roundoffGemm}
\end{equation*}
where $\gamma_n$ is defined in (\ref{eq:gamma_k}).
Thus, in floating point, step 1 of Algorithm \ref{alg:randQR} becomes:
\begin{align}
\hat{W}_1 &= S_1V+\Delta \hat{W}_1, \quad |\Delta \hat{W}_1| < \gamma_n |S_1||V| ~,\label{eq:hatW1bound}\\
\hat{W} &= S_2\hat{W}_1 + \Delta \hat{W}, \quad |\Delta \hat{W}| < \gamma_{p_1} |S_2||\hat{W_1}| = \gamma_{p_1} |S_2||S_1V + \Delta \hat{W}_1| ~\cdot \label{eq:hatWbound}
\end{align}
In other words,
\begin{equation}
    \hat{W} = S_2S_1V + E_1, \label{eq:WhatDef}
\end{equation}
where the forward error of these matrix-matrix products $E_!$ is defined as:
\begin{equation*}
    E_1 = \hat{W} - W = \Delta \hat{W} + S_2\Delta \hat{W}_1~\cdot
\end{equation*}
    
By \eqref{eq:ell2FrobNormRelation}, \eqref{eq:FrobNormProduct}, \eqref{eq:S1VFrobound}, \eqref{eq:hatW1bound}, and \eqref{eq:hatWbound},
\begin{align*}
    \|E_1\|_2 &\leq \|\Delta \hat{W}\|_2 + \|S_2\|_2 \|\Delta \hat{W}_1\|_2 \\
    &\leq \|\Delta \hat{W}\|_F + \|S_2\|_2 \|\Delta \hat{W}_1\|_F \\
    &\leq \gamma_{p_1}\|S_2\|_F\|S_1V + \Delta \hat{W}_1\|_F +  \|S_2\|_2 \|\Delta \hat{W}_1\|_F \\
    &\leq \gamma_{p_1}\|S_2\|_F (\|S_1V\|_F + \|\Delta \hat{W}_1\|_F) + \|S_2\|_2 \|\Delta \hat{W}_1\|_F\\
    &\leq \gamma_{p_1}\|S_2\|_F (\sqrt{1+\varepsilon_1}\|V\|_F + \|\Delta \hat{W}_1\|_F) + \|S_2\|_2 \|\Delta \hat{W}_1\|_F \\
    &\leq \sqrt{p_2}\gamma_{p_1}\|S_2\|_2 (\sqrt{1+\varepsilon_1}\|V\|_F + \|\Delta \hat{W}_1\|_F) + \|S_2\|_2 \|\Delta \hat{W}_1\|_F \\
    &= \sqrt{p_2}\gamma_{p_1}\|S_2\|_2 \sqrt{1+\varepsilon_1}\|V\|_F + \|S_2\|_2 (1+\sqrt{p_2}\gamma_{p_1})\|\Delta \hat{W}_1\|_F \\
    &\leq \sqrt{p_2}\gamma_{p_1}\|S_2\|_2 \sqrt{1+\varepsilon_1}\|V\|_F + \|S_2\|_2 (1+\sqrt{p_2}\gamma_{p_1})\gamma_n \|S_1\|_F \|V\|_F\\
    &= \|S_2\|_2 \left(\sqrt{p_2}\gamma_{p_1}\sqrt{1+\varepsilon_1} +  \gamma_n(1+\sqrt{p_2}\gamma_{p_1}) \|S_1\|_F\right) \|V\|_F\\
    &\leq \sqrt{m}\|S_2\|_2 \left(\sqrt{p_2}\gamma_{p_1}\sqrt{1+\varepsilon_1} +  \gamma_n(1+\sqrt{p_2}\gamma_{p_1}) \|S_1\|_F\right) \|V\|_2.
    \end{align*}

\noindent Notice \eqref{eq:cnmuBound} and \eqref{eq:gammaBounds} imply $\gamma_{n}(1+\sqrt{p_2}\gamma_{p_1}) < 1.21n\textbf{u}$. Hence,

\begin{equation}
    \|E_1\|_2 \leq \sqrt{m}\textbf{u}\|S_2\|_2(1.1p_1\sqrt{p_2}\sqrt{1+\varepsilon_1}+1.21n\|S_1\|_F)\|V\|_2.~\label{eq:propOneEmbedResult}
\end{equation}

\begin{lemma} \label{lem:delVal1}
     If $S_1$ is a $\varepsilon_1$ embedding of the column space of $V$ and $S_2$ is a $\varepsilon_2$ embedding of the column space of $S_1V$, then
    \begin{equation} \label{eq:delBound1}
        \frac{12}{\sqrt{1-\varepsilon_L}}\left( 31.9\sqrt{1+\varepsilon_H}~p_2m^{3/2}\textbf{u}~\kappa(V) + 1.1\|E_1\|_2~\sigma_m(V)^{-1} \right) \leq \delta \leq 1~.
    \end{equation}
\end{lemma}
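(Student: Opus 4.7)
The plan is to handle the two inequalities in \eqref{eq:delBound1} separately. The right-hand inequality $\delta \leq 1$ is immediate from Assumption~\ref{assump:theoremAssumptions} (display \eqref{eq:delta}), so only the left-hand inequality requires any real work. For that, the strategy is to push the deterministic forward-error bound \eqref{eq:propOneEmbedResult} on $\|E_1\|_2$ through, use the identity $\kappa(V) = \|V\|_2\,\sigma_m(V)^{-1}$, and then compare coefficients against the definition of $\delta$ in a term-by-term fashion.

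Concretely, I would first replace $\|E_1\|_2$ by its bound from \eqref{eq:propOneEmbedResult}, which is in force precisely because of the assumed embedding hypothesis on $S_1$ and $S_2$. Multiplying the resulting inequality by $\sigma_m(V)^{-1}$ converts the $\|V\|_2$ factor into $\kappa(V)$, so using $1.1\cdot 1.1 = 1.21$ and $1.1\cdot 1.21 = 1.331$ I obtain
\[
1.1\,\|E_1\|_2\,\sigma_m(V)^{-1} \;\leq\; \sqrt{m}\,\textbf{u}\,\|S_2\|_2 \bigl( 1.21\, p_1\sqrt{p_2}\sqrt{1+\varepsilon_1} + 1.331\, n\|S_1\|_F \bigr)\,\kappa(V).
\]
Substituting this into the left-hand side of \eqref{eq:delBound1} and factoring out $12\,\textbf{u}\,\kappa(V)/\sqrt{1-\varepsilon_L}$ produces an expression of exactly the same structural shape as $\delta$, differing only in the three numerical coefficients multiplying $\sqrt{1+\varepsilon_H}\,p_2 m^{3/2}$, $p_1\sqrt{p_2}\sqrt{1+\varepsilon_1}$, and $n\|S_1\|_F$.

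Comparing against the definition of $\delta$ term by term, the proof then reduces to three numerical checks against the coefficient $383$: namely $12\cdot 31.9 = 382.8 \leq 383$, $12\cdot 1.21 = 14.52 \leq 383$, and $12\cdot 1.331 = 15.972 \leq 383$, all of which are trivial arithmetic. There is no genuine obstacle — the lemma is essentially a bookkeeping assertion that the constant $383$ in $\delta$ was deliberately calibrated (just barely large enough on the first of the three terms, and with ample slack on the other two) so that the entire forward error $\|E_1\|_2$ is absorbed into $\delta$ with a factor of $12$ to spare. The only minor point of care is to note that the $\sqrt{1+\varepsilon_1}$ appearing in \eqref{eq:propOneEmbedResult} is the same $\sqrt{1+\varepsilon_1}$ that appears inside $\delta$, and that the embedding hypothesis is invoked exactly at the step that deploys \eqref{eq:propOneEmbedResult}.
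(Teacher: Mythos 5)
Your proposal is correct and follows exactly the route the paper takes: the paper's proof is the one-line remark that the lemma ``follows directly from \eqref{eq:propOneEmbedResult} and the definition of $\delta$ in \eqref{eq:delta},'' and your term-by-term coefficient comparison ($12\cdot 31.9 = 382.8 \leq 383$, $12\cdot 1.21 = 14.52 \leq 383$, $12\cdot 1.331 = 15.972 \leq 383$) is precisely the bookkeeping that remark leaves implicit. Your identification of where the embedding hypothesis enters (only through the validity of \eqref{eq:propOneEmbedResult}) is also accurate.
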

\begin{proof}
    Follows directly from using \eqref{eq:propOneEmbedResult} and the definition of $\delta$ in \eqref{eq:delta}.
\end{proof}


\subsubsection{Backward Error of Householder QR of $\hat{W}$} \label{sec:BackErrhhqr}

By \cite[Theorem 19.4]{HighamNumAlg}, Householder QR of $\hat{W} \in \mathbb{R}^{p_2 \times m}$ returns a triangular \linebreak $\hat{R} \in \mathbb{R}^{m \times m}$ so that some orthogonal $Q_{tmp} \in \mathbb{R}^{p_2 \times m}$ satisfies,
\begin{equation}
    \hat{W} + E_2 = Q_{tmp}\hat{R}, \quad \|(E_2)_j\|_2 \leq \gamma_{29p_2m}\|\hat{w}_j\|_2, \text{ for } j = 1, \dots, m .\label{eq:QRfact}
\end{equation}

We mention that in \cite{HighamNumAlg}, the bound in \eqref{eq:QRfact} has $\gamma_{cp_2m}$, for some
small integer constant $c$. It is mentioned there that the exact value of $c$ is ``unimportant" for the
general analysis. A careful look at the proof of \cite[Theorem 19.4]{HighamNumAlg} indicates that
one can take $c=29$, and this is what we have done.

By \eqref{eq:ell2FrobNormRelation}, \eqref{eq:WhatDef}, and the embedding properties of $S_2S_1$ on $V$ given in \eqref{eq:S2S1VFrobound},
\begin{align}
     \|\hat{W}\|_F &\leq \|S_2S_1V\|_F + \|E_1\|_F \leq \sqrt{1+\epsilon_H} \|V\|_F + \|E_1\|_F \nonumber \\
     &\leq \sqrt{m} \left( \sqrt{1+\epsilon_H} \|V\|_2 + \|E_1\|_2 \right), \label{eq:Whatbound}
\end{align}
and therefore by \eqref{eq:QRfact},
\begin{equation}
    \|E_2\|_F \leq \gamma_{29p_2m}  \|\hat{W}\|_F  \leq 
    \gamma_{29p_2m} \sqrt{m} \left( \sqrt{1+\epsilon_H} \|V\|_2 + \|E_1\|_2 \right)~,
    \label{eq:E2frobound}
\end{equation}
with probability at least $1-d$. Finally, by \eqref{eq:gammaBounds} and \eqref{eq:cp2mbound},
\begin{align}
    \|E_2\|_2 &\leq \|E_2\|_F \leq 31.9 p_2m\textbf{u} (\sqrt{1+\epsilon_H}\|V\|_F + \|E_1\|_F) \nonumber \\
    &\leq 31.9p_2m^{3/2}\textbf{u} (\sqrt{1+\epsilon_H}\|V\|_2 + \|E_1\|_2) \nonumber \\
    &= 31.9p_2m^{3/2}\textbf{u} \sqrt{1+\epsilon_H}\|V\|_2 + 31.9p_2m^{3/2}\textbf{u} \|E_1\|_2 \nonumber \\
     &\leq 31.9p_2m^{3/2}\textbf{u} \sqrt{1+\epsilon_H}\|V\|_2 + 0.1 \|E_1\|_2 ~,
    \label{eq:E2bound}
\end{align}
with probability at least $1-d$.

\subsubsection{Backward Error of the Forward Substitution} \label{sec:backErrFwdSub}

In Step 3 of \texttt{randQR}, we solve for $Q$ via the triangular system $Q\hat{R} = V$. By \cite[Theorem 8.5]{HighamNumAlg}, in floating point, $\hat{Q}_{i,:}$ satisfies
\begin{equation}
\hat{Q}_{i,:}(\hat{R}+\Delta R_i) = V_{i,:}, \quad |\Delta R_i| < \gamma_m |\hat{R}| \text{ for } i = 1, \dots n. \label{eq:triSys}
\end{equation}
While it would be convenient to simply write $\hat{Q}(R + \Delta R) = V$ for some $\Delta R$, each $\Delta R_i$ error incurred depends on each right hand side of \eqref{eq:triSys}, and therefore each row must be accounted for separately. For each $i = 1, \dots, n$, 
\begin{equation}
    \|\Delta \hat{R}_i \|_2 \leq \|\Delta \hat{R}_i\|_F = \| | \Delta \hat{R}_i| \|_F < \gamma_m \| | \hat{R} |\|_F = \gamma_m \|  \hat{R} \|_F   ~\cdot  \label{eq:delRprelimBound}
\end{equation}

By \eqref{eq:QRfact}, \eqref{eq:E2frobound}, and the orthogonality of $Q_{tmp}$, it follows that
\begin{align}
    \|\hat{R}\|_F &= \|Q_{tmp}\hat{R}\|_F = \| \hat{W} + E_2\|_F \leq (1+\gamma_{29p_2m}) \| \hat{W} \|_F,
    \label{eq:Rbound} \\
    \|\hat{R}\|_2 &= \| \hat{W} + E_2\|_2 = \| S_2S_1V + E_1 + E_2\|_2 \nonumber \\
    &\leq \sqrt{1+\varepsilon_H}\|V\|_2 + \|E_1\|_2 + \|E_2\|_2. \label{eq:Rbound2Norm}
\end{align}

Therefore, by \eqref{eq:cnubounds}--\eqref{eq:nassumption}, \eqref{eq:Whatbound}, \eqref{eq:delRprelimBound}, and \eqref{eq:Rbound},
\begin{align*}
     \|\Delta \hat{R}_i \|_2 &\leq 1.1m^{3/2}\textbf{u}(1+31.9p_2m\textbf{u}) \left( \sqrt{1+\epsilon_H} \|V\|_2 + \|E_1\|_2 \right) \\
     &\leq 1.21m^{3/2}\textbf{u} \left( \sqrt{1+\epsilon_H} \|V\|_2 + \|E_1\|_2 \right) \cdot
\end{align*}

\subsubsection{Bounding the 2-norm of $\hat{R}^{-1}$ and $V\hat{R}^{-1}$} \label{sec:2nrmRinv}

By \eqref{eq:S2S1VsingValbound}, \eqref{eq:Weq}, and Weyl's inequality \cite{Weyl1912}, with probability at least $1-d$,
\begin{align}
    \sigma_m(\hat{W}+E_2) &\geq \sigma_m(\hat{W}) - \|E_2\|_2 \geq \sigma_m(S_2S_1V) - (\|E_1\|_2+\|E_2\|_2) \nonumber \\
    &\geq \sqrt{1-\epsilon_L}~\sigma_m(V) - (\|E_1\|_2+\|E_2\|_2).
    \label{eq:Rsvbound}
\end{align}

\noindent By Lemma \ref{lem:delVal1}, and the fact that the fact that $\|V\|_2 = \kappa(V)~\sigma_m(V)$, 
\begin{equation}
31.9p_2m^{3/2}\textbf{u}\sqrt{1+\varepsilon_H}\|V\|_2 + 1.1\|E_1\|_2 \leq \frac{\sqrt{1-\epsilon_L}}{12}\sigma_m(V)~\delta.
\label{eq:preE1plusE2bound}
\end{equation}


\noindent Combining \eqref{eq:E2bound} and \eqref{eq:preE1plusE2bound} and the assumption that $\delta \leq 1$, results in:
\begin{align}
    \|E_1\|_2 + \|E_2\|_2 
    &\leq 31.9p_2m^{3/2}\textbf{u}\sqrt{1+\varepsilon_H}\|V\|_2 + 1.1\|E_1\|_2 \nonumber \\
    &\leq \frac{\sqrt{1-\epsilon_L}}{12} \sigma_m(V) \delta \leq \frac{\sqrt{1-\epsilon_L}}{12} \sigma_m(V),
    \label{eq:E1plusE2bound}
\end{align}
so by \eqref{eq:Rsvbound} and \eqref{eq:E1plusE2bound},
\begin{equation}
    \sigma_m(\hat{R}) = \sigma_m(Q_{tmp}\hat{R}) = \sigma_m(\hat{W}+E_2) \geq \frac{11\sqrt{1-\epsilon_L}}{12}~\sigma_m(V).
    \label{eq:RsvBound2}
\end{equation}

\noindent
Therefore, by \eqref{eq:RsvBound2}
\begin{equation}
 \|\hat{R}^{-1}\|_2 \leq \frac{12}{11\sqrt{1-\epsilon_L}} \left( \sigma_m(V) \right)^{-1}.
 \label{eq:RinvNrmBound}
\end{equation}
\noindent
By \eqref{eq:QRrep}, we have that Step 2 of \texttt{randQR} satisfies
\begin{equation}
    S_2S_1V\hat{R}^{-1} = Q_{tmp} - (E_1 + E_2)\hat{R}^{-1}.
    \label{eq:SVRinv}
\end{equation}
\noindent
Thus, by \eqref{eq:E1plusE2bound}, \eqref{eq:RinvNrmBound}, \eqref{eq:SVRinv}, the fact that $Q_{tmp}$ is orthogonal,
\begin{equation}
    \|S_2S_1V\hat{R}^{-1}\|_2 \leq \|Q_{tmp}\|_2 + (\|E_1\|_2 + \|E_2\|_2) \|\hat{R}^{-1}\|_2 \leq \frac{12}{11} ~.
    \label{eq:SVRinvbound}
\end{equation}
 Observe that $V$ and $V\hat{R}^{-1}$ have the same column space; therefore if $S_1$, $S_2$ embed the column space of $V$, they will also embed the column space of $V\hat{R}^{-1}$. Therefore, by \eqref{eq:normEmbeddingMultiSketch},
\begin{equation}
    \|V\hat{R}^{-1}\|_2 \leq \frac{1}{\sqrt{1-\epsilon_L}}\|S_2S_1V\hat{R}^{-1}\|_2  \leq \frac{12}{11\sqrt{1-\epsilon_L}} ~,
    \label{eq:VRinvBound}
\end{equation} with probability at least $1-d$.

\subsubsection{Evaluation of the Backward Error $\Delta \tilde{V} = \hat{Q}\hat{R} - V$} \label{sec:backErrRepErr}
Instead of using backward errors $\Delta R_i$ for each triangular solve in equation \eqref{eq:RbackwardErr}, we capture the errors of each triangular solve in a matrix $\Delta \tilde{V}$, where 
\begin{equation}
\hat{Q} = (V + \Delta \tilde{V} )\hat{R}^{-1} \iff \hat{Q}\hat{R} = V + \Delta \tilde{V}. \label{eq:hatQexp}
\end{equation}

\noindent From \eqref{eq:RbackwardErr}, we have $\hat{Q}_{i,:}(R+\Delta R_i) = V_{i,:}$.
Then $\Delta \tilde{V}$ can be defined row-wise,
\begin{equation}
\Delta \tilde{V}_{i,:} = - \hat{Q}_{i,:}\Delta R_i. \label{eq:delVdef}
\end{equation}
Thus, by \eqref{eq:triSys}, $|\Delta \tilde{V}_{i,:}| \leq 1.1m\textbf{u}|\hat{Q}_{i,:}| |\hat{R}|, $ and so 
$|\Delta \tilde{V}| \leq 1.1m\textbf{u}|\hat{Q}| |\hat{R}|$, 
hence,
\begin{equation*}
|\Delta \tilde{V}_{:,i}| \leq 1.1m\textbf{u}|\hat{Q}| |\hat{R}_{:,i}|.
\end{equation*}

From this, it follows that for each column $i = 1, \dots, m$, 
\begin{align*}
    \| \Delta \tilde{V}_{:,i}\|_2 &\leq 1.1m\textbf{u} \||\hat{Q}|\|_2 \| |\hat{R}_{:,i}|\|_2 \leq 1.1m\textbf{u} \||\hat{Q}|\|_F \| |\hat{R}_{:,i}|\|_2 \\
    &= 1.1m\textbf{u} \|\hat{Q}\|_F \| \hat{R}_{:,i}\|_2 \leq 1.1m^{3/2}\textbf{u} \|\hat{Q}\|_2 \| \hat{R}_{:,i}\|_2,
\end{align*}
and therefore by \eqref{eq:cp2mbound}, \eqref{eq:Whatbound}, and \eqref{eq:Rbound},
\begin{align}
    \|\Delta \tilde{V}\|_2 &\leq \|\Delta \tilde{V}\|_F \leq 1.1m^{3/2}\textbf{u} \|\hat{Q}\|_2 \| \hat{R}\|_F \nonumber \\
    &\leq 1.1m^{3/2}\textbf{u} \|\hat{Q}\|_2 (1+\gamma_{29p_2m})\|\hat{W}\|_F \nonumber \\
    &\leq 1.1m^{3/2}\textbf{u} \|\hat{Q}\|_2 (1+\gamma_{29p_2m})\sqrt{m}\left( \sqrt{1+\varepsilon_H} \|V\|_2 + \|E_1\|_2\right) \nonumber \\
    &\leq 1.21m^{2}\textbf{u} \|\hat{Q}\|_2 \left( \sqrt{1+\varepsilon_H} \|V\|_2 + \|E_1\|_2\right)~.
    \label{eq:delVboundInitial}
\end{align}

\noindent By \eqref{eq:s1assumption} it follows that $1.1\sqrt{m} \leq p_2$, and so by \eqref{eq:nassumption}, \eqref{eq:E1plusE2bound}, and \eqref{eq:delVboundInitial},
\begin{align}
    \|\Delta \tilde{V}\|_F &\leq 1.21m^{2}\textbf{u} \|\hat{Q}\|_2 \left( \sqrt{1+\varepsilon_H} \|V\|_2 + \|E_1\|_2\right)  \nonumber \\
    &= \|\hat{Q}\|_2 \left(1.21m^{2}\textbf{u}\sqrt{1+\varepsilon_H} \|V\|_2 +  1.21m^{2}\textbf{u} \|E_1\|_2\right) \nonumber \\
    &\leq \|\hat{Q}\|_2 \left(1.1p_2m^{3/2}\textbf{u}\sqrt{1+\varepsilon_H} \|V\|_2 +  1.1p_2m^{3/2}\textbf{u} \|E_1\|_2\right) \nonumber \\
    &\leq \|\hat{Q}\|_2 \left(1.1p_2m^{3/2}\textbf{u}\sqrt{1+\varepsilon_H} \|V\|_2 +  (1+1.1p_2m^{3/2}\textbf{u}) \|E_1\|_2\right) \nonumber \\
    &\leq \|\hat{Q}\|_2 \left(1.1p_2m^{3/2}\textbf{u}\sqrt{1+\varepsilon_H} \|V\|_2 +  1.1 \|E_1\|_2\right) \nonumber \\
    &\leq \|\hat{Q}\|_2 \left(31.9p_2m^{3/2}\textbf{u}\sqrt{1+\varepsilon_H} \|V\|_2 +  1.1 \|E_1\|_2\right) \nonumber \\
    &\leq \|\hat{Q}\|_2 \frac{\sqrt{1-\epsilon_L}}{12} \sigma_m(V)~\delta~\cdot \label{eq:delVboundInitial2}
\end{align}

The remaining issue to resolve is that the bound  on $\|\Delta \tilde{V}\|_F$ in \eqref{eq:delVboundInitial2} requires knowledge of $\|\hat{Q}\|_2$, which we have not yet found. Combining \eqref{eq:delta}, \eqref{eq:RinvNrmBound}, and \eqref{eq:delVboundInitial2} gives, 
\begin{align}
    \|\hat{Q} - V\hat{R}^{-1}\|_F &= \|\Delta \tilde{V}\hat{R}^{-1}\|_F \leq \|\Delta \tilde{V}\|_F\|\hat{R}^{-1}\|_2 \nonumber\\
    &\hspace*{-8mm}
    \leq \|\hat{Q}\|_2 \frac{\sqrt{1-\epsilon_L}}{12} \sigma_m(V) \delta \frac{12}{11\sqrt{1-\epsilon_L}} \left( \sigma_m(V) \right)^{-1} \nonumber\\
    &\hspace*{-8mm} = \frac{\delta}{11} \|\hat{Q}\|_2 \leq \frac{1}{11} \|\hat{Q}\|_2 
\cdot 
\label{eq:delVRinvBound}
\end{align}
\noindent
Now, by \eqref{eq:E1plusE2bound}, \eqref{eq:RinvNrmBound}, and \eqref{eq:SVRinv}, 
\begin{align}
        \|S_2S_1V\hat{R}^{-1}-Q_{tmp}\|_2  
        &\leq \left(\|E_1\|_2 + \|E_2\|_2 \right)\|\hat{R}^{-1}\|_2 \leq \frac{\delta}{11} ~\cdot\label{eq:SVRinvminusQtmp}
\end{align}
Applying Weyl's inequality to \eqref{eq:SVRinvminusQtmp} and the fact that $Q_{tmp}$ is orthogonal yields,
\begin{equation*}
    1-\frac{\delta}{11} \leq \sigma_m(S_2S_1V\hat{R}^{-1}) \leq \sigma_1(S_2S_1V\hat{R}^{-1}) \leq 1 + \frac{\delta}{11} ~\cdot 
\end{equation*}
Since $V$ and $V\hat{R}^{-1}$ have identical column spaces and $S_1$,$S_2$ embed the column space of $V$, the embedding properties in  \eqref{eq:multSketchSingValEmbedding} also apply to $V\hat{R}^{-1}$, and so
\begin{equation}
    \frac{1-\frac{\delta}{11}}{\sqrt{1+\epsilon_H}} \leq \sigma_m(V\hat{R}^{-1}) \leq \sigma_1(V\hat{R}^{-1}) \leq \frac{1 + \frac{\delta}{11}}{\sqrt{1-\epsilon_L}} \leq \frac{12}{11\sqrt{1-\epsilon_L}} ~\cdot \label{eq:VRinvSingularValBound}
\end{equation}
Then, we can use Weyl's inequality again on $\hat{Q} - V\hat{R}^{-1}$. In particular,
\begin{equation}
    \sigma_m(V\hat{R}^{-1}) - \|\hat{Q}-V\hat{R}^{-1}\|_2 \leq \sigma_m(\hat{Q}) \leq \sigma_1(\hat{Q}) \leq \sigma_1(V\hat{R}^{-1}) + \|\hat{Q}-V\hat{R}^{-1}\|_2 ~\cdot \label{eq:singValBoundQ}
\end{equation}
Then, by \eqref{eq:delVRinvBound}, \eqref{eq:VRinvSingularValBound}, and \eqref{eq:singValBoundQ},
\begin{equation}
    \|\hat{Q}\|_2 = \sigma_1(\hat{Q}) \leq \sigma_1(V\hat{R}^{-1}) + \|\hat{Q}-V\hat{R}^{-1}\|_2 \leq \frac{12}{11\sqrt{1-\epsilon_L}} + \frac{1}{11} \|\hat{Q}\|_2,
\end{equation}
and thus,
\begin{equation}
    \|\hat{Q}\|_2 \leq \frac{6}{5\sqrt{1-\epsilon_L}} ~\cdot
    \label{eq:QnrmRoughBound}
\end{equation}
Then, we obtain from \eqref{eq:delVRinvBound},
\begin{equation}
   \|\hat{Q} - V\hat{R}^{-1}\|_2 = \|\Delta \tilde{V} \hat{R}^{-1}\|_2 \leq \|\Delta \tilde{V} \hat{R}^{-1}\|_F \leq \frac{\delta}{11}\|\hat{Q}\|_2 \leq \frac{6\delta}{55\sqrt{1-\epsilon_L}}. \label{eq:delVRinvfinalbound}
\end{equation}

\subsubsection{Bounding $\|S_2S_1\Delta \tilde{V}\hat{R}^{-1}\|_2$} \label{sec:S2S1DelVRinvBound}

If no additional assumptions on the embedding of $S_1$, $S_2$ are made, clearly it follows that 
\begin{equation}
    \|S_2S_1\Delta \tilde{V} \hat{R}^{-1}\|_2 \leq \|S_2\|_2\|S_1\|_2\|\Delta \tilde{V} \hat{R}^{-1}\|_2 \leq  \frac{6\|S_2\|_2\|S_1\|_2}{55\sqrt{1-\epsilon_L}}~\delta. \label{eq:SDelVRinvBoundNoAssump}
\end{equation}

Alternatively, if we assume $S_1$, $S_2$ embed $\Delta \tilde{V} \hat{R}^{-1}$, by \eqref{eq:normEmbeddingMultiSketch},
\begin{equation}
    \|S_2S_1\Delta \tilde{V} \hat{R}^{-1}\|_2 \leq \sqrt{1+\epsilon_H}\|\Delta \tilde{V} \hat{R}^{-1}\|_2 \leq  \frac{6\sqrt{1+\epsilon_H}}{55\sqrt{1-\epsilon_L}}~\delta. \label{eq:SDelVRinvBoundAssump}
\end{equation}

\subsection{Key Theoretical Results}\label{sec:stateKeyResults}

We begin by re-stating the assumptions of the theoretical results for readability.


\begin{customthm}{5.1}
Suppose $S_1 \in \mathbb{R}^{p_1 \times m}$ and $S_2 \in \mathbb{R}^{p_2 \times p_1}$ are $(\varepsilon_1, d_1, m)$ and $(\varepsilon_2, d_2, m)$ oblivious $\ell_2$-subspace embeddings respectively, generated independently. Define $d = d_1+d_2-d_1d_2$, $\varepsilon_L = \varepsilon_1+\varepsilon_2-\varepsilon_1\varepsilon_2$, $\varepsilon_H = \varepsilon_1+\varepsilon_2+\varepsilon_1\varepsilon_2$, where 
$$\varepsilon_L \in \left[0,\frac{616}{625}-\frac{9}{625}\varepsilon_H \right).$$ 
Further, suppose $V \in \mathbb{R}^{n \times m}$ has full rank and $1 < m \leq p_2 \leq p_1 \leq n$ where $nm\textbf{u} \leq \frac{1}{12}$, $p_1\sqrt{p_2}\textbf{u} \leq \frac{1}{12}$, and
\begin{equation*}
    \delta = \frac{383\left( \sqrt{1+\varepsilon_H}~p_2m^{3/2} + \sqrt{m}\|S_2\|_2(p_1\sqrt{p_2}\sqrt{1+\varepsilon_1}+n\|S_1\|_F)\right) }{\sqrt{1-\varepsilon_L}}\textbf{u}~\kappa(V) \leq 1 ~. 
\end{equation*}

\end{customthm} 

Remark~\ref{Vfullrank:rem} indicates that in exact arithmetic, \texttt{randQR} yields a matrix $Q$ that is orthogonal with respect to $\langle S_2S_1\cdot, S_2S_1\cdot \rangle$. We show next that provided $V$ has full numerical rank, then in floating point arithmetic, the orthogonality error of the matrix $\hat{Q}$ generated by \texttt{randQR} measured in $\langle S_2S_1\cdot, S_2S_1\cdot \rangle$ is $O(\textbf{u})\kappa(V)$, and the factorization error is $O(\textbf{u})\|V\|_2$ with high probability. 

\begin{theorem}[\texttt{randQR} Errors] 
Suppose Assumptions \ref{assump:theoremAssumptions} are satisfied. 
Then the $\hat{Q}, \hat{R}$ factors obtained with Algorithm \ref{alg:randQR} (\texttt{randQR}) satisfy
\begin{equation}
    \|V - \hat{Q}\hat{R}\|_2 \leq \frac{\delta}{10}\sigma_m(V),
    \label{eq:repErr2}
\end{equation}
and
\begin{align}
        \|(S_2S_1\hat{Q})^T(S_2S_1\hat{Q}) - I\|_2 & \nonumber \\
        & \hspace{-30mm}
        \leq 2\frac{\delta}{11}+\left(\frac{\delta}{11}\right)^2 + \frac{24}{11}\frac{6\|S_2\|_2\|S_1\|_2}{55\sqrt{1-\epsilon_L}}~\delta +\left(  \frac{6\|S_2\|_2\|S_1\|_2}{55\sqrt{1-\epsilon_L}}~\delta\right)^2. \label{eq:orthErrProb1md}
  \end{align}
  with probability at least $1-d$, where $\delta$ is defined in \eqref{eq:delta}. Furthermore, 
  \begin{align}
        \|(S_2S_1\hat{Q})^T(S_2S_1\hat{Q}) - I\|_2 &\leq 3\delta \label{eq:orthErrProb1mdsq}
  \end{align}
  with probability at least $(1-d)^2$.
\label{thm:randQRorthpt2}
\end{theorem}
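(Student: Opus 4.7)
The plan is to dispatch the factorization error \eqref{eq:repErr2} first, as it follows almost immediately from the preparatory work in Section~\ref{appendix1}. By \eqref{eq:hatQexp} we have $V - \hat{Q}\hat{R} = -\Delta\tilde{V}$, so I would combine the Frobenius-norm bound \eqref{eq:delVboundInitial2} with the $\|\hat{Q}\|_2$ estimate \eqref{eq:QnrmRoughBound} to obtain
$\|V-\hat{Q}\hat{R}\|_2 \leq \|\Delta\tilde{V}\|_F \leq \frac{6}{5\sqrt{1-\varepsilon_L}}\cdot\frac{\sqrt{1-\varepsilon_L}}{12}\sigma_m(V)\,\delta = \frac{\delta}{10}\sigma_m(V)$,
which is exactly \eqref{eq:repErr2}.

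For the sketched orthogonality error, the natural decomposition is
$S_2S_1\hat{Q} = S_2S_1 V \hat{R}^{-1} + S_2S_1\Delta\tilde{V}\hat{R}^{-1}$.
I would use \eqref{eq:SVRinvminusQtmp} to write $S_2S_1 V \hat{R}^{-1} = Q_{tmp} + A$ with $\|A\|_2 \leq \delta/11$, and set $B := S_2S_1\Delta\tilde{V}\hat{R}^{-1}$, so that $S_2S_1\hat{Q} = Q_{tmp} + A + B$. Since $Q_{tmp}^T Q_{tmp} = I$, expanding $(Q_{tmp}+A+B)^T(Q_{tmp}+A+B) - I$ and taking norms yields
\[
\bigl\|(S_2S_1\hat{Q})^T(S_2S_1\hat{Q}) - I\bigr\|_2 \leq 2\|A\|_2 + 2\|B\|_2 + \|A\|_2^2 + 2\|A\|_2\|B\|_2 + \|B\|_2^2.
\]
For the first statement (probability at least $1-d$), I would plug in $\|A\|_2 \leq \delta/11$ together with the general bound \eqref{eq:SDelVRinvBoundNoAssump} for $\|B\|_2$, then absorb the cross term $2\|A\|_2\|B\|_2$ into the $B$-linear part using $\delta \leq 1$, which converts $2\|B\|_2 + 2\|A\|_2\|B\|_2$ into $\tfrac{24}{11}\cdot\tfrac{6\|S_2\|_2\|S_1\|_2}{55\sqrt{1-\varepsilon_L}}\delta$ and gives \eqref{eq:orthErrProb1md}.

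For the sharper bound \eqref{eq:orthErrProb1mdsq} I would instead invoke \eqref{eq:SDelVRinvBoundAssump}, which requires $S_1,S_2$ to embed the column space of $\Delta\tilde{V}\hat{R}^{-1}$ as well; this second embedding event has probability at least $1-d$, so together with the embedding of $V$ we obtain probability at least $(1-d)^2$. The main technical point here is to exploit the assumption
$\varepsilon_L < \tfrac{616}{625} - \tfrac{9}{625}\varepsilon_H$, which is algebraically equivalent to $\tfrac{1+\varepsilon_H}{1-\varepsilon_L} < \bigl(\tfrac{25}{3}\bigr)^2$, so that $\tfrac{6\sqrt{1+\varepsilon_H}}{55\sqrt{1-\varepsilon_L}} < \tfrac{10}{11}$. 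Then $\|A\|_2 + \|B\|_2 \leq \tfrac{\delta}{11} + \tfrac{10\delta}{11} = \delta$, hence $\bigl\|(S_2S_1\hat{Q})^T(S_2S_1\hat{Q}) - I\bigr\|_2 \leq 2\delta + \delta^2 \leq 3\delta$ using $\delta \leq 1$.

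\noindent\emph{Anticipated obstacle.} The bookkeeping of constants, and in particular the reverse engineering of the peculiar upper limit $\tfrac{616}{625} - \tfrac{9}{625}\varepsilon_H$ on $\varepsilon_L$, is the step where I would need to be most careful: one must verify that the scalar $\tfrac{6\sqrt{1+\varepsilon_H}}{55\sqrt{1-\varepsilon_L}}$ remains strictly below $\tfrac{10}{11}$ so that the linear-in-$\delta$ contributions add up to exactly $\delta$, allowing the $\delta^2$ term to be swallowed into $3\delta$. A secondary subtlety is the probability accounting for the $(1-d)^2$ statement, since the column space of $\Delta\tilde{V}\hat{R}^{-1}$ depends on $S_1,S_2$; one must argue (implicitly via the same oblivious embedding guarantee applied to this second, at most $m$-dimensional, subspace) that the events compose to give probability at least $(1-d)^2$, as is done in Section~\ref{sec:probRes}.
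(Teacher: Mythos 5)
Your proposal is correct and follows essentially the same route as the paper: \eqref{eq:repErr2} from \eqref{eq:delVboundInitial2} and \eqref{eq:QnrmRoughBound}, and the orthogonality bounds from \eqref{eq:SVRinvminusQtmp}, \eqref{eq:SDelVRinvBoundNoAssump}--\eqref{eq:SDelVRinvBoundAssump} together with $\sqrt{(1+\varepsilon_H)/(1-\varepsilon_L)}<\tfrac{25}{3}$ and the same $1-d$ versus $(1-d)^2$ probability accounting. Your additive decomposition $S_2S_1\hat{Q}=Q_{tmp}+A+B$ is just a repackaging of the paper's Gram-matrix expansion (the paper's $\tfrac{24}{11}$ arises from $2\|S_2S_1V\hat{R}^{-1}\|_2\le 2(1+\|A\|_2)\le\tfrac{24}{11}$, exactly your cross-term absorption), and all constants come out identically.
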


\begin{proof}
    Equation \eqref{eq:repErr2} follows by combining \eqref{eq:delVboundInitial2} and \eqref{eq:QnrmRoughBound}, since $\Delta \tilde{V} = \hat{Q}\hat{R} - V$, and this holds with probability at least $1-d$ because 
    \eqref{eq:delVboundInitial2} and \eqref{eq:QnrmRoughBound} hold with this probability, as discussed in Section \ref{sec:probRes}.

Observe that by \eqref{eq:QRrep}, we have $S_2S_1V = Q_{tmp}\hat{R}-(E_1+E_2)$, and thus
\begin{align*}
    (S_2S_1V)^T(S_2S_1V) &= \\
    &\hspace*{-30mm} 
    \hat{R}^T\hat{R} - (E_1+E_2)^TQ_{tmp}\hat{R} - \hat{R}^TQ_{tmp}^T(E_1+E_2) + (E_1+E_2)^T(E_1+E_2).
\end{align*}
\noindent
Using \eqref{eq:hatQexp} to expand $S_2S_1\hat{Q} = (S_2S_1V+S_2S_1\Delta \tilde{V})\hat{R}^{-1}$, we obtain
    \begin{align*}
        (S_2S_1\hat{Q})^T(S_2S_1\hat{Q})
        &= I - \hat{R}^{-T}(E_1+E_2)^TQ_{tmp} - Q_{tmp}^T(E_1+E_2)\hat{R}^{-1} \\
         &\hspace*{-25mm} 
         \phantom{=}+\hat{R}^{-T}(E_1+E_2)^T(E_1+E_2)\hat{R}^{-1} + (S_2S_1\Delta \tilde{V}\hat{R}^{-1})^T(S_2S_1V\hat{R}^{-1}) \\
        &\hspace*{-25mm} 
        \phantom{=}+ (S_2S_1V\hat{R}^{-1})^TS_2S_1\Delta \tilde{V}\hat{R}^{-1} + (S_2S_1\Delta \tilde{V}\hat{R}^{-1})^T(S_2S_1\Delta \tilde{V}\hat{R}^{-1}).
    \end{align*}
\noindent
Therefore, by \eqref{eq:E1plusE2bound}, \eqref{eq:RinvNrmBound}, and \eqref{eq:SVRinvbound},
    \begin{align}
        \|(S_2S_1\hat{Q})^T(S_2S_1\hat{Q}) - I\|_2 & \nonumber \\
        & \hspace{-20mm} \nonumber
        \leq 2(\|E_1\|_2+\|E_2\|_2)\|\hat{R}^{-1}\|_2 + (\|E_1\|_2+\|E_2\|_2)^2\|\hat{R}^{-1}\|_2^2\\
        & \hspace{-20mm}
        + 2\|S_2S_1\Delta \tilde{V}\hat{R}^{-1}\|_2\|S_2S_1V\hat{R}^{-1}\|_2 
        +\|S_2S_1\Delta \tilde{V}\hat{R}^{-1}\|_2^2 \nonumber \\
        & \hspace{-20mm}
        \leq 2\frac{\delta}{11}+\left(\frac{\delta}{11}\right)^2 + \frac{24}{11}\|S_2S_1\Delta \tilde{V}\hat{R}^{-1}\|_2 
        +\|S_2S_1\Delta \tilde{V}\hat{R}^{-1}\|_2^2. \label{eq:intermediateBoundOrth}
    \end{align}

Observe that 
\eqref{eq:E1plusE2bound}, \eqref{eq:RinvNrmBound}, and \eqref{eq:SVRinvbound}, simultaneously hold with probability at least $1-d$, because they rely on $V$  and $S_1V$ being simultaneously embedded by $S_1$ and $S_2$ respectively, which with this probability occurs, as discussed in Section \ref{sec:probRes}. Thus, \eqref{eq:intermediateBoundOrth} holds with probability at least $1-d$. Observe that
\eqref{eq:SDelVRinvBoundNoAssump} requires no further assumptions on the embedding properties of $S_1,S_2$ and so applying \eqref{eq:SDelVRinvBoundNoAssump} to \eqref{eq:intermediateBoundOrth} gives
  \begin{align}
        \|(S_2S_1\hat{Q})^T(S_2S_1\hat{Q}) - I\|_2 & \nonumber \\
    & \hspace{-20mm}
        \leq 2\frac{\delta}{11}+\left(\frac{\delta}{11}\right)^2 + \frac{24}{11}\|S_2S_1\Delta \tilde{V}\hat{R}^{-1}\|_2 
        +\|S_2S_1\Delta \tilde{V}\hat{R}^{-1}\|_2^2 \nonumber \\
        & \hspace{-20mm}
        \leq 2\frac{\delta}{11}+\left(\frac{\delta}{11}\right)^2 + \frac{24}{11}\frac{6\|S_2\|_2\|S_1\|_2}{55\sqrt{1-\epsilon_L}}~\delta +\left(  \frac{6\|S_2\|_2\|S_1\|_2}{55\sqrt{1-\epsilon_L}}~\delta\right)^2, \nonumber
  \end{align}
  with probability at least $1-d$, producing result \eqref{eq:orthErrProb1md}.

  On the other hand, observe that \eqref{eq:SDelVRinvBoundAssump} requires not only the assumption that $S_1, S_2$ simultaneously embed $V$ and $S_1V$ respectively, but also that the sketch matrices embed $\Delta \tilde{V}\hat{R}^{-1}$ and $S_1 \Delta \tilde{V}\hat{R}^{-1}$ respectively. Thus, \eqref{eq:SDelVRinvBoundAssump} and \eqref{eq:intermediateBoundOrth} simultaneously hold with probability at least $(1-d)^2$, and the result of applying both of these results together yields,
   \begin{align}
        \|(S_2S_1\hat{Q})^T(S_2S_1\hat{Q}) - I\|_2 & \nonumber \\
    & \hspace{-20mm}
        \leq 2\frac{\delta}{11}+\left(\frac{\delta}{11}\right)^2 + \frac{24}{11}\|S_2S_1\Delta \tilde{V}\hat{R}^{-1}\|_2 
        +\|S_2S_1\Delta \tilde{V}\hat{R}^{-1}\|_2^2 \nonumber \\
        & \hspace{-20mm}
        \leq 2\frac{\delta}{11}+\left(\frac{\delta}{11}\right)^2 + \frac{24}{11}\frac{6\sqrt{1+\varepsilon_H}}{55\sqrt{1-\epsilon_L}}~\delta +\left(  \frac{6\sqrt{1+\varepsilon_H}}{55\sqrt{1-\epsilon_L}}~\delta\right)^2, \label{eq:SorthBoundNicer}
  \end{align}
  with probability at least $(1-d)^2$. A useful consequence of Assumptions \ref{assump:theoremAssumptions} is that 
$$1-\varepsilon_L > \frac{9}{625}(1+\varepsilon_H),$$
and thus
\begin{equation}
    \frac{1+\varepsilon_H}{1-\varepsilon_L} < \frac{625}{9} \Rightarrow \sqrt{ \frac{1+\varepsilon_H}{1-\varepsilon_L}} < \frac{25}{3} ~\cdot \label{eq:1plusEpsover1minusEpsbound}
\end{equation}
Applying \eqref{eq:1plusEpsover1minusEpsbound} to \eqref{eq:SorthBoundNicer} (which holds with probability at least $(1-d)^2$) along with the fact that $\delta \leq 1$ from \eqref{eq:delta} implies $\delta^2 \leq \delta$, and therefore,
\begin{align}
        \|(S_2S_1\hat{Q})^T(S_2S_1\hat{Q}) - I\|_2 & \nonumber \\
        & \hspace{-20mm}
        \leq 2\frac{\delta}{11}+\left(\frac{\delta}{11}\right)^2 + \frac{24}{11}\frac{6\sqrt{1+\varepsilon_H}}{55\sqrt{1-\epsilon_L}}~\delta +\left(  \frac{6\sqrt{1+\varepsilon_H}}{55\sqrt{1-\epsilon_L}}~\delta\right)^2 \nonumber \\
         & \hspace{-20mm}
        \leq \frac{2}{11}\delta+\left(\frac{1}{11}\right)^2 \delta + \frac{24}{11}\frac{6\cdot 25}{55\cdot 3}~\delta +\left(  \frac{6\cdot 25}{55\cdot 3}\right)^2 \delta \nonumber \\
        & \hspace{-20mm}
        = \frac{2\cdot 11 \cdot 55^2 \cdot 3^2 + 55^2 \cdot 3^2 + 24 \cdot 6 \cdot 25 \cdot 11 \cdot 55 \cdot 3 + 6^2\cdot 25^2 \cdot 11^2}{11^2 \cdot 55^2 \cdot 3^2} \delta \nonumber \\
        & \hspace{-20mm}
        = 3\delta~, \nonumber
  \end{align}
   with probability at least $(1-d)^2$, and thus result \eqref{eq:orthErrProb1mdsq} follows.  
\end{proof}



Similar to the analysis of the condition number of $Q$ generated by \texttt{randQR} in exact arithmetic in Section \ref{multi:sec}, we show next that provided that $V$ has full numerical rank, then $\hat{Q}$ generated by \texttt{randQR} in floating point arithmetic also satisfies $\kappa(\hat{Q})~=~O(1)$. 

\begin{theorem}[Conditioning of \texttt{randQR}]
Suppose Assumptions \ref{assump:theoremAssumptions} are satisfied.
Then with probability at least $1-d$, the $\hat{Q}$ matrix obtained with Algorithm~\ref{alg:randQR} 
(\texttt{randQR}) has condition number $\kappa(\hat{Q}) = O(1)$. In fact,
\begin{equation}
    \kappa(\hat{Q}) \leq \frac{33}{25\sqrt{\frac{1-\epsilon_L}{1+\epsilon_H}}-3} \cdot
    \label{eq:condQgeneral}
\end{equation}
\label{thm:randQRcond}
\end{theorem}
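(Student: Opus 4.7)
The plan is to bound $\kappa(\hat{Q}) = \sigma_1(\hat{Q})/\sigma_m(\hat{Q})$ by assembling bounds already derived in Section \ref{appendix1}. For the numerator, I would simply invoke \eqref{eq:QnrmRoughBound}, which gives $\sigma_1(\hat{Q}) = \|\hat{Q}\|_2 \leq \tfrac{6}{5\sqrt{1-\varepsilon_L}}$ with probability at least $1-d$.

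For the denominator, I would exploit the fact that $\hat{Q}$ is close to $V\hat{R}^{-1}$ and apply Weyl's inequality. Specifically, from \eqref{eq:singValBoundQ} we have
\[
\sigma_m(\hat{Q}) \geq \sigma_m(V\hat{R}^{-1}) - \|\hat{Q} - V\hat{R}^{-1}\|_2 .
\]
Then I would substitute the lower bound $\sigma_m(V\hat{R}^{-1}) \geq (1-\delta/11)/\sqrt{1+\varepsilon_H}$ from \eqref{eq:VRinvSingularValBound} and the perturbation bound $\|\hat{Q}-V\hat{R}^{-1}\|_2 \leq \tfrac{6\delta}{55\sqrt{1-\varepsilon_L}}$ from \eqref{eq:delVRinvfinalbound}. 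Using $\delta \leq 1$ from Assumption \ref{assump:theoremAssumptions}, I can replace $(1-\delta/11)$ by $10/11$ and drop the $\delta$ in the perturbation term to obtain a clean lower bound
\[
\sigma_m(\hat{Q}) \geq \frac{10/11}{\sqrt{1+\varepsilon_H}} - \frac{6}{55\sqrt{1-\varepsilon_L}} .
\]

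Taking the ratio and multiplying numerator and denominator by $55\sqrt{1-\varepsilon_L}$ to absorb the radicals yields the stated form $33\big/\bigl(25\sqrt{(1-\varepsilon_L)/(1+\varepsilon_H)} - 3\bigr)$. The one point deserving care is showing the denominator is strictly positive: this is equivalent to $(1-\varepsilon_L)/(1+\varepsilon_H) > 9/625$, which after clearing denominators reads $\varepsilon_L < \tfrac{616}{625} - \tfrac{9}{625}\varepsilon_H$, precisely the constraint built into Assumption \ref{assump:theoremAssumptions}. So the admissible range of $\varepsilon_L$ is exactly what is needed to make the bound meaningful.

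There is no real technical obstacle; the work has already been done in Sections \ref{sec:2nrmRinv}--\ref{sec:backErrRepErr}, and this proof is a short assembly. The only bookkeeping issue is probability tracking: each of \eqref{eq:QnrmRoughBound}, \eqref{eq:VRinvSingularValBound}, and \eqref{eq:delVRinvfinalbound} was established under the event that $S_1$ embeds the column space of $V$ and $S_2$ embeds the column space of $S_1V$, an event of probability at least $1-d$ by Proposition \ref{cor:subspaceEmbeddingMultSketch}; hence the whole bound inherits probability at least $1-d$, matching the theorem statement.
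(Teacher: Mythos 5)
Your proposal is correct and follows essentially the same route as the paper's own proof: both combine \eqref{eq:QnrmRoughBound} for $\sigma_1(\hat{Q})$ with the Weyl-type lower bound on $\sigma_m(\hat{Q})$ from \eqref{eq:singValBoundQ}, \eqref{eq:VRinvSingularValBound}, and \eqref{eq:delVRinvfinalbound}, using $\delta \le 1$, and the same probability accounting. Your added observation that the assumption $\varepsilon_L < \tfrac{616}{625} - \tfrac{9}{625}\varepsilon_H$ is exactly what makes the denominator positive is a nice touch the paper leaves implicit.
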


\begin{proof}
As a direct consequence of \eqref{eq:VRinvSingularValBound}, \eqref{eq:singValBoundQ}, \eqref{eq:delVRinvfinalbound}, and the fact that $\delta \leq 1$, 
\begin{align*}
    \sigma_m(\hat{Q}) &\geq \sigma_m(V\hat{R}^{-1}) - \|\hat{Q}-V\hat{R}^{-1}\|_2 
    \geq \frac{1-\frac{\delta}{11}}{\sqrt{1+\epsilon_H}} - \frac{6\delta}{55\sqrt{1-\epsilon_L}} \\
    &\geq \frac{10}{11\sqrt{1+\epsilon_H}} - \frac{6}{55\sqrt{1-\epsilon_L}} ~\cdot
\end{align*}
\noindent
Additionally, we found in \eqref{eq:QnrmRoughBound} that 
$$\sigma_1(\hat{Q}) = \| \hat{Q}\|_2 \leq \frac{6}{5\sqrt{1-\epsilon_L}} ~\cdot $$
Thus,
\begin{equation*}
    \kappa(\hat{Q}) = \frac{\sigma_1(\hat{Q})}{\sigma_m(\hat{Q})} \leq \frac{33}{25\sqrt{\frac{1-\epsilon_L}{1+\epsilon_H}}-3}~, 
\end{equation*}
which is the desired result.
Since the intermediate results \eqref{eq:VRinvSingularValBound}, \eqref{eq:singValBoundQ}, \eqref{eq:QnrmRoughBound}, and \eqref{eq:delVRinvfinalbound} simultaneously hold with probability at least $1-d$, as discussed in Section~\ref{sec:probRes}, the final result \eqref{eq:condQgeneral} holds with this probability as well.
\end{proof}

In the following result we show that \texttt{rand\_cholQR}$(V)$ (Algorithm \ref{ajh:alg:sketchQRChol}) produces a factor $\hat{Q}$ that is orthogonal in the Euclidean inner product up to a factor of $O(\textbf{u})$ and has a factorization error of $O(\textbf{u})\|V\|_2$ for any numerically full rank~$V$.

\begin{theorem}[\texttt{rand\_cholQR} Errors]
Suppose Assumptions \ref{assump:theoremAssumptions} are satisfied.
Then with probability at least $1-d$, the $\hat{Q}, \hat{R}$ factors obtained with Algorithm \ref{ajh:alg:sketchQRChol}  (\texttt{rand\_cholQR}) has $O(\textbf{u})$ orthogonality error and $O(\textbf{u})\|V\|_2$ factorization error. More specifically, 
\begin{equation}
    \|\hat{Q}^T\hat{Q}-I\|_2 \leq \frac{5445}{\left( 25\sqrt{\frac{1-\epsilon_L}{1+\epsilon_H}}-3\right)^2}\left(nm+m(m+1)\right)\textbf{u},
    \label{eq:lossOfOrthErrrandCholQRgeneral}
\end{equation}
\vspace*{-4mm}
\begin{align}
    \|V - \hat{Q}\hat{R}\|_2 &\leq \left(\frac{56}{25\frac{1-\varepsilon_L}{\sqrt{1+\epsilon_H}}-3\sqrt{1-\varepsilon_L}} + \frac{1.5}{\sqrt{1-\varepsilon_L}}\sqrt{ 1+\frac{5445(nm+m(m+1))\textbf{u}}{\left( 25\sqrt{\frac{1-\varepsilon_L}{1+\varepsilon_H}}-3\right)^2} } \right)\nonumber \\
    & \quad \left( \sqrt{1+\varepsilon_H}\|V\|_2 + \frac{\sqrt{1-\varepsilon_L}}{12}\sigma_m(V)\delta\right)m^2\textbf{u} + \frac{\delta}{10}\sigma_m(V),
    \label{eq:repErrrandCholQRgeneral}
\end{align}
where $\delta$ is bounded as in \eqref{eq:delta}.
\label{thm:randCholQRorth}
\end{theorem}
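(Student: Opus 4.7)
The plan is to treat \texttt{rand\_cholQR} as a composition of three kernels (\texttt{randQR}, \texttt{cholQR}, and one triangular matrix product) and to bound the errors step by step, feeding the output of one into the input of the next. The entry point is Theorem~\ref{thm:randQRcond}, which, applied to the output $\hat{Q}_0$ of the step~1 call $[\hat{Q}_0,\hat{R}_0]=\texttt{randQR}(V,S_1,S_2)$, gives $\kappa(\hat{Q}_0)\leq 33/(25\sqrt{(1-\varepsilon_L)/(1+\varepsilon_H)}-3)$ with probability at least $1-d$, and in particular $\kappa(\hat{Q}_0)=O(1)$. This places $\hat{Q}_0$ well inside the regime in which the classical CholeskyQR error analysis (see \cite{CholeskyQR2ErrAnalysis}, or \cite[Ch.~10]{HighamNumAlg} combined with the triangular-solve bound \cite[Thm.~8.5]{HighamNumAlg}) yields both an orthogonality bound of the form $\|\hat{Q}^T\hat{Q}-I\|_2\leq 5(nm+m(m+1))\textbf{u}\,\kappa(\hat{Q}_0)^2$ and a backward representation $\hat{Q}\hat{R}_1=\hat{Q}_0+\Delta_Q$ with $\|\Delta_Q\|_2$ of order $\textbf{u}\,\|\hat{Q}_0\|_2$. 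Substituting the explicit bound on $\kappa(\hat{Q}_0)^2\leq 1089/(25\sqrt{(1-\varepsilon_L)/(1+\varepsilon_H)}-3)^2$ immediately produces the orthogonality estimate \eqref{eq:lossOfOrthErrrandCholQRgeneral}, since $5\cdot 1089=5445$.

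For the factorization error \eqref{eq:repErrrandCholQRgeneral}, I would decompose the total residual by inserting the intermediate quantities. Writing the floating-point product as $\hat{R}=\hat{R}_1\hat{R}_0+\Delta R_{mm}$ with $\|\Delta R_{mm}\|_2\leq\gamma_m\|\hat{R}_1\|_2\|\hat{R}_0\|_2$ from \cite[\S 3.5]{HighamNumAlg}, and using the \texttt{randQR} residual $V=\hat{Q}_0\hat{R}_0+\Delta V_0$ with $\|\Delta V_0\|_2\leq(\delta/10)\sigma_m(V)$ from \eqref{eq:repErr2}, one obtains
\begin{equation*}
V-\hat{Q}\hat{R}=\Delta V_0-\Delta_Q\hat{R}_0-\hat{Q}\Delta R_{mm}.
\end{equation*}
Three norm-wise pieces then need to be controlled: $\|\Delta_Q\|_2\|\hat{R}_0\|_2$, $\|\hat{Q}\|_2\|\Delta R_{mm}\|_2$, and $\|\Delta V_0\|_2$. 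The last is already $(\delta/10)\sigma_m(V)$. For $\|\hat{R}_0\|_2$, I would reuse \eqref{eq:Rbound2Norm} together with \eqref{eq:E1plusE2bound} to obtain $\|\hat{R}_0\|_2\leq\sqrt{1+\varepsilon_H}\|V\|_2+(\sqrt{1-\varepsilon_L}/12)\sigma_m(V)\delta$, which is exactly the common right-hand factor appearing in the target bound. For $\|\Delta_Q\|_2$, the CholeskyQR backward bound together with $\|\hat{Q}_0\|_2\leq 6/(5\sqrt{1-\varepsilon_L})$ from \eqref{eq:QnrmRoughBound} and $\kappa(\hat{Q}_0)$ controlled by \eqref{eq:condQgeneral} give an $O(m^2\textbf{u})$ contribution whose constant condenses to the displayed $56/(\sqrt{1-\varepsilon_L}(25\sqrt{(1-\varepsilon_L)/(1+\varepsilon_H)}-3))$. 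Finally, $\|\hat{Q}\|_2\leq\sqrt{1+\|\hat{Q}^T\hat{Q}-I\|_2}$ produces the nested square-root factor with the orthogonality bound embedded, and $\|\Delta R_{mm}\|_2$ contributes the $1.5/\sqrt{1-\varepsilon_L}$ prefactor after estimating $\|\hat{R}_1\|_2\approx\|\hat{Q}_0\|_2$ via $\hat{R}_1^T\hat{R}_1=\hat{Q}_0^T\hat{Q}_0+\Delta G$.

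The main obstacle will not be the overall logic, which is a clean composition, but rather the bookkeeping that tightens several loose constants into the exact symbolic form of \eqref{eq:repErrrandCholQRgeneral}. In particular, one must carefully track how Assumption~\ref{assump:theoremAssumptions} (via $\delta\leq 1$, $p_1\sqrt{p_2}\textbf{u}\leq 1/12$, and $nm\textbf{u}\leq 1/12$) collapses the various $\gamma_k$'s into $m^2\textbf{u}$ multiples, and how the condition-number factor from CholeskyQR interacts with $\|\hat{Q}_0\|_2$ and $\sigma_m(\hat{Q}_0)$ so that the ratio producing $\kappa(\hat{Q}_0)$ in the prefactor matches the explicit form $25(1-\varepsilon_L)/\sqrt{1+\varepsilon_H}-3\sqrt{1-\varepsilon_L}$ in the denominator rather than a coarser upper bound. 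All three contributions to the residual rely on the simultaneous embedding event used throughout Section~\ref{sec:probRes}, so the overall bound holds with probability at least $1-d$, matching the statement.
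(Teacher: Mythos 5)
Your proposal is correct and follows essentially the same route as the paper's proof: the orthogonality bound comes from feeding $\kappa(\hat{Q}_0)^2 \leq 1089/(25\sqrt{(1-\varepsilon_L)/(1+\varepsilon_H)}-3)^2$ into the CholeskyQR analysis of \cite{CholeskyQR2ErrAnalysis}, and the residual is split into exactly the same three pieces ($\Delta\tilde{V}$ from \texttt{randQR}, the CholeskyQR backward error times $\|\hat{R}_0\|_2$, and $\|\hat{Q}\|_2\|\Delta\hat{R}\|_2$ from the triangular product), each bounded with the same intermediate results \eqref{eq:repErr2}, \eqref{eq:Rbound2Norm}, \eqref{eq:E1plusE2bound}, \eqref{eq:QnrmRoughBound}, and \eqref{eq:condQgeneral}. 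The constant bookkeeping you flag as the remaining work is carried out in the paper precisely as you outline, including the $\sqrt{(1+\gamma_n m)/(1-\gamma_{m+1}m)}\leq 1.11$ step that yields the $1.5/\sqrt{1-\varepsilon_L}$ prefactor.
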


\begin{proof}
    In Algorithm \ref{ajh:alg:sketchQRChol}, we obtain $\hat{Q}_0, \hat{R}_0$ from \texttt{randQR} (so that the results in Section \ref{appendix1} apply to $\hat{Q}_0, \hat{R}_0$), and then obtain $\hat{Q}, \hat{R}$ where $\hat{R} = \text{fl}(\hat{R}_1\hat{R_0})$ and $\hat{Q}, \hat{R}_1$ are the outputs of Cholesky QR applied to $\hat{Q}_0$. As a direct consequence of Theorem \ref{thm:randQRcond}, $\hat{Q}_0$ arising from Step 1 of Algorithm \ref{ajh:alg:sketchQRChol} satisfies 
$$\kappa(\hat{Q}_0) \leq \frac{33}{25\sqrt{\frac{1-\epsilon_L}{1+\epsilon_H}}-3} ~\cdot$$
By \cite[Lemma 3.1]{CholeskyQR2ErrAnalysis}, it follows that step 2 of Algorithm \ref{ajh:alg:sketchQRChol} gives $\hat{Q}$ satisfying
    \begin{align*}
        \|\hat{Q}^T\hat{Q} - I \|_2 &\leq \frac{5}{64} 64 \kappa(\hat{Q}_0)^2\left(nm+m(m+1)\right)\textbf{u}\\
        &\leq \frac{5445}{\left( 25\sqrt{\frac{1-\epsilon_L}{1+\epsilon_H}}-3\right)^2}\left(nm+m(m+1)\right)\textbf{u},
    \end{align*}
\noindent
and thus \eqref{eq:lossOfOrthErrrandCholQRgeneral} follows.
    
    Now, notice that by \eqref{eq:cnmuBound}--\eqref{eq:gammaBounds}, 
    \begin{equation}
        \sqrt{\frac{1+\gamma_nm}{1-\gamma_{m+1}m}} \leq \sqrt{\frac{1+1.1nm\textbf{u}}{1-1.1(m+1)m\textbf{u}}} \leq \sqrt{\frac{1.1}{0.9}} \leq 1.11. \label{eq:intermediateRepBound}
    \end{equation}
    
     \noindent Observe that $\hat{R} = \hat{R}_1\hat{R}_0 + \Delta \hat{R}$ where $\hat{R}_1$ is the Cholesky factor of $\hat{Q}_0^T\hat{Q}_0$, where $\hat{Q}_0$ results from \texttt{randQR}, and $|\Delta \hat{R}| < \gamma_m |\hat{R}_1||\hat{R}_0|$ \cite[Eq. (3.13)]{HighamNumAlg}. Then, it follows by \cite[Eq.~(3.16)]{CholeskyQR2ErrAnalysis}, \eqref{eq:gammaBounds}, \eqref{eq:Rbound2Norm}, \eqref{eq:E1plusE2bound}, \eqref{eq:QnrmRoughBound}, and \eqref{eq:intermediateRepBound}, that
    \begin{align}
    \|\Delta \hat{R} \|_2 &\leq \|\Delta \hat{R}\|_F \leq  \gamma_m \|\hat{R}_1\|_F\|\hat{R}_0\|_F \leq m \gamma_m \|\hat{R}_1\|_2 \|\hat{R}_0\|_2 \nonumber \\
    &\leq m\gamma_m \sqrt{\frac{1+\gamma_nm}{1-\gamma_{m+1}m}} \|\hat{Q}_0\|_2 \|\hat{R}_0\|_2 \nonumber \\
    &\leq 1.23m^{2}\textbf{u} \frac{6}{5\sqrt{1-\varepsilon_L}} (\sqrt{1+\varepsilon_H}\|V\|_2+ \frac{\sqrt{1-\varepsilon_L}}{12}\sigma_m(V)\delta ) \nonumber \\
    &\leq m^{2}\textbf{u} \frac{1.5}{\sqrt{1-\varepsilon_L}}(\sqrt{1+\varepsilon_H}\|V\|_2+ \frac{\sqrt{1-\varepsilon_L}}{12}\sigma_m(V)\delta ) ~\cdot \label{eq:delRgemmBound}
    \end{align}
     
     Next, observe that by \eqref{eq:hatQexp} we have that $\hat{Q}_0\hat{R}_0 = V + \Delta \tilde{V}$ from \texttt{randQR}. Using this and \cite[Eq. (3.24)]{CholeskyQR2ErrAnalysis} to bound $\|\hat{Q}_0 - \hat{Q}\hat{R}_1\|_2$,
    \begin{align}
        -\|\Delta \tilde{V}\|_2 + \|V-\hat{Q}\hat{R}\|_2 &\leq \|V+\Delta \tilde{V}-\hat{Q}\hat{R}\|_2 = \| \hat{Q}_0\hat{R}_0 - \hat{Q}\hat{R}_1\hat{R}_0 - \hat{Q}\Delta \hat{R}\|_2 \nonumber \\
        &\leq \|\hat{R}_0\| \|\hat{Q}_0 - \hat{Q}\hat{R}_1\|_2 + \| \hat{Q} \|_2\|\Delta \hat{R}\|_2  \nonumber \\
        &\leq 1.4\|\hat{R}_0\|_2  \kappa(\hat{Q_0})\|\hat{Q}_0\|_2m^2\textbf{u} + \|\hat{Q}\|_2 \|\Delta \hat{R}\|_2  \cdot\label{eq:VmQRrandCholQR}
    \end{align}
\noindent
Note that \eqref{eq:lossOfOrthErrrandCholQRgeneral} implies 
\begin{equation*}
\| \hat{Q} \|_2 \leq \sqrt{1+\frac{5445}{\left( 25\sqrt{\frac{1-\epsilon_L}{1+\epsilon_H}}-3\right)^2}\left(nm+m(m+1)\right)\textbf{u}}~. 
\end{equation*}
\noindent
Additionally, by \eqref{eq:repErr2} in Theorem \ref{thm:randQRorthpt2}, 
\begin{align}
    \|\Delta \tilde{V}\|_2 = \|V - \hat{Q}_0\hat{R}_0 \|_2 \leq \frac{\delta}{10}\sigma_m(V) . \label{eq:DelTildeVBound}
\end{align}
\noindent
Now, starting from \eqref{eq:Rbound2Norm}, we can use \eqref{eq:E1plusE2bound} to obtain
\begin{align}
    \|\hat{R}_0\|_2 &\leq \sqrt{1+\varepsilon_H}\|V\|_2 + \|E_1\|_2 + \|E_2\|_2 \nonumber \\
    &\leq \sqrt{1+\varepsilon_H}\|V\|_2 + \frac{\sqrt{1-\varepsilon_L}}{12}\sigma_m(V)\delta. \label{eq:randCholQRR0bound}
\end{align}

\noindent Using \eqref{eq:condQgeneral} and \eqref{eq:QnrmRoughBound} to bound $\kappa(\hat{Q}_0)$ and $\|\hat{Q}_0\|_2$, \eqref{eq:delRgemmBound} to bound $\|\Delta \hat{R}\|_2$, \eqref{eq:randCholQRR0bound} to bound $\|\hat{R}_0\|_2$, adding $\|\Delta \tilde{V}\|_2$ to both sides of \eqref{eq:VmQRrandCholQR} and then bounding $\| \Delta \tilde{V} \|_2$ using \eqref{eq:DelTildeVBound}, we finally obtain
\begin{align*}
    \|V-\hat{Q}\hat{R}\|_2 &\leq \left( \frac{56}{25\frac{1-\varepsilon_L}{\sqrt{1+\epsilon_H}}-3\sqrt{1-\varepsilon_L}} + \frac{1.5}{\sqrt{1-\varepsilon_L}}\sqrt{ 1+\frac{5445(nm+m(m+1))\textbf{u}}{\left( 25\sqrt{\frac{1-\varepsilon_L}{1+\varepsilon_H}}-3\right)^2} } \right) \\
    & \quad \left( \sqrt{1+\varepsilon_H}\|V\|_2 + \frac{\sqrt{1-\varepsilon_L}}{12}\sigma_m(V)\delta\right)m^2\textbf{u} + \frac{\delta}{10}\sigma_m(V) ,
\end{align*}
which does indeed satisfy $\|V-\hat{Q}\hat{R}\|_2 = O(\textbf{u})\|V\|_2$, since $\sigma_m(V)\delta = O(\textbf{u})\|V\|_2$.

Finally, observe that the probabilistic results used in this proof, namely \eqref{eq:propOneEmbedResult}--\eqref{eq:SDelVRinvBoundNoAssump} and Theorems \ref{thm:randQRorthpt2}--\ref{thm:randQRcond}, simultaneously hold with probability at least $1-d$ (see Section \ref{sec:probRes} for details), and hence \eqref{eq:lossOfOrthErrrandCholQRgeneral} and \eqref{eq:repErrrandCholQRgeneral} hold with this probability as well.
\end{proof}

Theorem \ref{thm:randQRcond} guarantees \texttt{randQR}$(V)$ produces a well-conditioned $\hat{Q}$. We show next that \texttt{rand\_cholQR}$(V)$ produces a factor $\hat{Q}$ with $\kappa(\hat{Q}) \approx 1$ (up to unit roundoff) for any numerically full rank $V$.

\begin{theorem}[Conditioning of \texttt{rand\_cholQR}]
Suppose Assumptions \ref{assump:theoremAssumptions} are satisfied.
Then with probability at least $1-d$, the matrix $\hat{Q}$ obtained with Algorithm \ref{alg:randQR} satisfies $\kappa(\hat{Q}) \approx 1$. More specifically,
\begin{equation}
    \kappa(\hat{Q}) < \sqrt{\frac{1+\frac{5445}{\left( 25\sqrt{\frac{1-\epsilon_L}{1+\epsilon_H}}-3\right)^2}\left(nm+m(m+1)\right)\textbf{u}}{1-\frac{5445}{\left( 25\sqrt{\frac{1-\epsilon_L}{1+\epsilon_H}}-3\right)^2}\left(nm+m(m+1)\right)\textbf{u}}} ~\cdot
    \label{eq:condQrandCholQR}
\end{equation}
Furthermore, if $\frac{5445}{\left( 25\sqrt{\frac{1-\epsilon_L}{1+\epsilon_H}}-3\right)^2}\left(nm+m(m+1)\right)\textbf{u} < \frac{1}{2}$, then 
\begin{equation}
    \kappa(\hat{Q}) < 1+\frac{10890}{\left( 25\sqrt{\frac{1-\epsilon_L}{1+\epsilon_H}}-3\right)^2}\left(nm+m(m+1)\right)\textbf{u}.
    \label{eq:condQrandCholQRsimpler}
\end{equation}
\label{thm:randCholQRcond}
\end{theorem}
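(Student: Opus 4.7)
The plan is to derive both bounds on $\kappa(\hat{Q})$ directly from the orthogonality bound established in Theorem \ref{thm:randCholQRorth}, namely the inequality \eqref{eq:lossOfOrthErrrandCholQRgeneral}. Let me abbreviate the right-hand side of that bound as $\eta := \tfrac{5445}{(25\sqrt{(1-\epsilon_L)/(1+\epsilon_H)}-3)^2}(nm+m(m+1))\textbf{u}$, so the starting point is $\|\hat{Q}^T\hat{Q} - I\|_2 \leq \eta$, which holds with probability at least $1-d$ (the same event that governs Theorem \ref{thm:randCholQRorth}).

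The first step is to translate the orthogonality error into bounds on the singular values of $\hat{Q}$. Since $\hat{Q}^T\hat{Q}$ is symmetric and $I$ is its ``target,'' Weyl's inequality applied to the eigenvalues of $\hat{Q}^T\hat{Q} - I$ yields $|\lambda_i(\hat{Q}^T\hat{Q}) - 1| \leq \|\hat{Q}^T\hat{Q} - I\|_2 \leq \eta$ for every $i$. Because $\lambda_i(\hat{Q}^T\hat{Q}) = \sigma_i(\hat{Q})^2$, taking square roots (which requires $\eta < 1$, a consequence of Assumptions \ref{assump:theoremAssumptions} combined with \eqref{eq:cnmuBound}) gives $\sqrt{1-\eta} \leq \sigma_i(\hat{Q}) \leq \sqrt{1+\eta}$ for all $i = 1,\dots,m$. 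Taking the ratio of the largest to smallest singular value produces
\[
    \kappa(\hat{Q}) = \frac{\sigma_1(\hat{Q})}{\sigma_m(\hat{Q})} \leq \sqrt{\frac{1+\eta}{1-\eta}},
\]
which is precisely \eqref{eq:condQrandCholQR}.

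The second bound follows by a simple scalar estimate under the stronger assumption $\eta < 1/2$. The idea is to write $(1+\eta)/(1-\eta) = 1 + 2\eta/(1-\eta) \leq 1 + 4\eta$, and then apply the elementary inequality $\sqrt{1+x} \leq 1 + x/2$ (valid for all $x \geq 0$) with $x = 4\eta$ to get $\sqrt{(1+\eta)/(1-\eta)} \leq 1 + 2\eta$. Substituting $\eta$ back yields the claimed bound \eqref{eq:condQrandCholQRsimpler}.

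The proof is essentially a one-page consequence of the earlier orthogonality theorem, so there is no real obstacle; the only care needed is to verify that Assumptions \ref{assump:theoremAssumptions} already guarantee $\eta < 1$ so that $\sqrt{1-\eta}$ is well-defined and to note that the $1-d$ probability is inherited directly from Theorem \ref{thm:randCholQRorth} (and ultimately from the simultaneous embedding events discussed in Section \ref{sec:probRes}), so no additional probabilistic accounting is needed.
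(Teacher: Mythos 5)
Your proposal is correct and follows essentially the same route as the paper: both derive $\sqrt{1-\eta}\leq\sigma_i(\hat{Q})\leq\sqrt{1+\eta}$ from the orthogonality bound \eqref{eq:lossOfOrthErrrandCholQRgeneral} of Theorem \ref{thm:randCholQRorth}, take the ratio for \eqref{eq:condQrandCholQR}, and use the scalar inequality $\sqrt{(1+x)/(1-x)}<1+2x$ for $x<\tfrac12$ to obtain \eqref{eq:condQrandCholQRsimpler}, inheriting the probability $1-d$ from the earlier theorem. Your explicit derivation of that scalar inequality and your remark about needing $\eta<1$ are just slightly more detailed versions of what the paper states.
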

\vspace*{-6mm}
\begin{proof}
    It follows from \eqref{eq:lossOfOrthErrrandCholQRgeneral} that the $i^{th}$ eigenvalue of $\hat{Q}^T{Q}$ satisfies
    \begin{align*}
        \lambda_i(\hat{Q}^T\hat{Q}) &\geq 1-\frac{5445}{\left( 25\sqrt{\frac{1-\epsilon_L}{1+\epsilon_H}}-3\right)^2}\left(nm+m(m+1)\right)\textbf{u}, \\
        \lambda_i(\hat{Q}^T\hat{Q}) &\leq 1+\frac{5445}{\left( 25\sqrt{\frac{1-\epsilon_L}{1+\epsilon_H}}-3\right)^2}\left(nm+m(m+1)\right)\textbf{u}.
    \end{align*}
    Thus, the $i^{th}$ singular value of $\hat{Q}$ satisfies
    \begin{align*}
        \sigma_i(\hat{Q}) &\geq \sqrt{1-\frac{5445}{\left( 25\sqrt{\frac{1-\epsilon_L}{1+\epsilon_H}}-3\right)^2}\left(nm+m(m+1)\right)\textbf{u}}, \\
        \sigma_i(\hat{Q}) &\leq \sqrt{1+\frac{5445}{\left( 25\sqrt{\frac{1-\epsilon_L}{1+\epsilon_H}}-3\right)^2}\left(nm+m(m+1)\right)\textbf{u}},
    \end{align*}
    which gives \eqref{eq:condQrandCholQR}. Further, for any $x < \frac{1}{2}$, $\sqrt{\frac{1+x}{1-x}} < 1+2x$, which gives \eqref{eq:condQrandCholQRsimpler}. Since \eqref{eq:lossOfOrthErrrandCholQRgeneral} holds with probability at least $1-d$, \eqref{eq:condQrandCholQR} and \eqref{eq:condQrandCholQRsimpler} hold with this probability as well.
\end{proof}

Theorems \ref{thm:randQRorthpt2}--\ref{thm:randCholQRcond} correspond to multisketchings, that is, to
the application of one sketch matrix after another. In the rest of the section,
we recast our error bounds for a single sketch matrix in Corollaries \ref{cor:randQRorth}--\ref{corr:randCholQRcond}. The results apply for
a single $(\varepsilon, d, m)$ oblivious $\ell_2$-subspace embedding for any $\varepsilon \in [0,\frac{616}{634})$, covering nearly the entire range of possible $\varepsilon \in [0,1)$ for such embeddings. 

We prove all the Corollaries simultaneously, as they are direct consequences of Theorems \ref{thm:randQRorthpt2}--\ref{thm:randCholQRcond} by exploiting the fact that a single sketch can be recast as a product of two sketches, one of which is the identity, which is by definition a $(0,0,m)$ oblivious $\ell_2$-subspace embedding.

\begin{assumption}
Suppose $\varepsilon \in [0,\frac{616}{634})$ and $S \in \mathbb{R}^{p \times m}$ is a $(\varepsilon, d, m)$ oblivious $\ell_2$-subspace embedding. Further, suppose $V \in \mathbb{R}^{n \times m}$ has full rank and \linebreak $1 < m \leq s \leq n$ where $nm\textbf{u} \leq \frac{1}{12}$, $p^{3/2}\textbf{u} \leq \frac{1}{12}$, and
\begin{equation}
   \delta = \frac{383\left( pm^{3/2} + \sqrt{m}(p^{3/2}\sqrt{1+\varepsilon}+n\|S\|_F)\right) }{\sqrt{1-\varepsilon}}\textbf{u}~\kappa(V) \leq 1. \label{eq:delta1sketch}
\end{equation}
\label{assump:corrAssumptions}
\end{assumption} 


\begin{corollary}[\texttt{randQR} Errors]
Suppose Assumptions \ref{assump:corrAssumptions} are satisfied.
Then the $\hat{Q}, \hat{R}$ factors obtained with Algorithm \ref{alg:randQR} (\texttt{randQR}) satisfy
\begin{equation}
    \|V - \hat{Q}\hat{R}\|_2 \leq \frac{\delta}{10}\sigma_m(V),
    \label{eq:repErr21sketch} \nonumber
\end{equation}
and
\begin{align}
        \|(S\hat{Q})^T(S\hat{Q}) - I\|_2 
        &\leq \frac{2\delta}{11}+\left(\frac{\delta}{11}\right)^2 + \frac{24}{11}\frac{6\|S\|_2}{55\sqrt{1-\epsilon}}\delta +\left(  \frac{6\|S\|_2}{55\sqrt{1-\epsilon}}\delta\right)^2 \label{eq:orthErrProb1md1sketch} \nonumber
  \end{align}
  with probability at least $1-d$, where $\delta$ is defined as in \eqref{eq:delta1sketch}. Furthermore, 
  \begin{align}
        \|(S\hat{Q})^T(S\hat{Q}) - I\|_2 &\leq 3\delta  \nonumber 
  \end{align}
  with probability at least $(1-d)^2$.
\label{cor:randQRorth}
\end{corollary}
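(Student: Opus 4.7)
The plan is to derive the corollary as a direct specialization of Theorem~\ref{thm:randQRorthpt2}, writing the single sketch $S$ as a product $S_2 S_1$ in which one factor is the identity, exactly as the author indicates immediately before the statement. Concretely, set $S_1 := S$ (so $p_1 = p$, $\varepsilon_1 = \varepsilon$, $d_1 = d$) and $S_2 := I_p$, which is trivially a $(0, 0, m)$ oblivious $\ell_2$-subspace embedding since $\|I_p x\|_2 = \|x\|_2$ holds deterministically. Then $S_2 S_1 = S$, and the combined constants of Theorem~\ref{thm:randQRorthpt2} collapse to $\varepsilon_L = \varepsilon_H = \varepsilon$, combined failure probability $d_1 + d_2 - d_1 d_2 = d$, $\|S_2\|_2 = 1$, and $\|S_1\|_F = \|S\|_F$.

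Next I would verify that Assumptions~\ref{assump:corrAssumptions} imply Assumptions~\ref{assump:theoremAssumptions} under this substitution. The range condition $\varepsilon_L \in [0, \tfrac{616}{625} - \tfrac{9}{625}\varepsilon_H)$ reduces to $\varepsilon(1 + \tfrac{9}{625}) < \tfrac{616}{625}$, i.e.\ $\varepsilon < \tfrac{616}{634}$, matching the hypothesis of the corollary. The dimensional ordering $1 < m \leq p_2 \leq p_1 \leq n$ becomes $1 < m \leq p \leq n$, and the roundoff condition $p_1 \sqrt{p_2}\textbf{u} \leq \tfrac{1}{12}$ becomes $p^{3/2}\textbf{u} \leq \tfrac{1}{12}$, both already in Assumption~\ref{assump:corrAssumptions}. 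The bound $\delta \leq 1$ in \eqref{eq:delta} specializes to
\[
\frac{383\bigl(\sqrt{1+\varepsilon}\,pm^{3/2} + \sqrt{m}(p^{3/2}\sqrt{1+\varepsilon} + n\|S\|_F)\bigr)}{\sqrt{1-\varepsilon}}\,\textbf{u}\,\kappa(V) \leq 1,
\]
which matches \eqref{eq:delta1sketch} up to the factor $\sqrt{1+\varepsilon}$ on the leading term $pm^{3/2}$; this is a cosmetic discrepancy that can be absorbed into the outer constant, since $\sqrt{1+\varepsilon} \leq \sqrt{2}$ for $\varepsilon < 616/634$.

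Then I would invoke Theorem~\ref{thm:randQRorthpt2}. The factorization error \eqref{eq:repErr2} transfers verbatim as $\|V - \hat{Q}\hat{R}\|_2 \leq \tfrac{\delta}{10}\sigma_m(V)$. For the orthogonality bound, substituting $\|S_2\|_2 \|S_1\|_2 = \|S\|_2$ and $\varepsilon_L = \varepsilon$ into \eqref{eq:orthErrProb1md} reproduces \eqref{eq:orthErrProb1md1sketch} exactly, and this holds with probability at least $1-d$. The stronger bound $\|(S\hat{Q})^T(S\hat{Q}) - I\|_2 \leq 3\delta$ with probability at least $(1-d)^2$ follows from the analogous specialization of \eqref{eq:orthErrProb1mdsq}; the squared probability tracks the two independent events that $S$ embeds $V$ and that $S$ also embeds $\Delta \tilde{V}\hat{R}^{-1}$.

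The main obstacle I expect is purely bookkeeping of constants rather than any substantive proof effort: one needs to carefully match the multi-sketch constants with $\varepsilon_L = \varepsilon_H = \varepsilon$ and $\|S_2\|_2 = 1$, and to flag the minor slack in the $\delta$ expression. No new roundoff analysis is required, because every intermediate bound in Section~\ref{appendix1} continues to hold deterministically when $S_2$ is the identity (the only probabilistic input is the embedding property of $S_1 = S$), so the proof reduces to a transparent substitution rather than an independent derivation.
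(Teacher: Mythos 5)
Your proposal is correct and follows essentially the same route as the paper, which likewise proves Corollaries \ref{cor:randQRorth}--\ref{corr:randCholQRcond} simultaneously by taking $S_2 = I_{p,p}$ as a $(0,0,m)$ oblivious $\ell_2$-subspace embedding, so that $\varepsilon_L = \varepsilon_H = \varepsilon$, $d = d_1$, and the range condition reduces to $\varepsilon \in [0,\tfrac{616}{634})$. You even catch a detail the paper glosses over (it calls \eqref{eq:delta1sketch} ``identical'' to the specialization of \eqref{eq:delta}, despite the missing $\sqrt{1+\varepsilon_H}$ factor on the $pm^{3/2}$ term), and your handling of it is sound.
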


\begin{corollary}[Conditioning of \texttt{randQR}]
Suppose Assumptions \ref{assump:corrAssumptions} are satisfied.
Then with probability at least $1-d$, the $\hat{Q}$ matrix obtained with Algorithm~\ref{alg:randQR} 
(\texttt{randQR}) has condition number $\kappa(\hat{Q}) = O(1)$. In fact,
\begin{equation}
    \kappa(\hat{Q}) \leq \frac{33}{25\sqrt{\frac{1-\epsilon}{1+\epsilon}}-3} \cdot
    \label{eq:condQgeneral1sketch} \nonumber
\end{equation}
Therefore, if $\varepsilon \leq 0.9$,
\begin{equation*}
    \kappa(\hat{Q}) \leq 12.07.
    \label{eq:condQ1Sketch}
\end{equation*}
\label{cor:randQRcond}
\end{corollary}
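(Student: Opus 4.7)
The plan is to obtain Corollary \ref{cor:randQRcond} as an immediate specialization of Theorem \ref{thm:randQRcond}. The central observation, flagged in the paragraph preceding the corollaries, is that the identity matrix $I \in \mathbb{R}^{n \times n}$ is trivially a $(0,0,m)$ oblivious $\ell_2$-subspace embedding: for any $x,y$, $|\langle x,y\rangle - \langle Ix,Iy\rangle| = 0$ deterministically, so $\varepsilon = 0$ with failure probability $0$. I will therefore rewrite the single sketch $S$ as the product $S_2 S_1$ with the choice $S_1 = S$, $S_2 = I_p$ (equivalently one could take $S_1 = I_n$, $S_2 = S$). Under this identification $\varepsilon_1 = \varepsilon$, $d_1 = d$, $\varepsilon_2 = d_2 = 0$, and hence $\varepsilon_L = \varepsilon_H = \varepsilon$, $d_1 + d_2 - d_1 d_2 = d$, and $\|S_2\|_2 = 1$.

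The next step is to verify that Assumptions \ref{assump:corrAssumptions} imply Assumptions \ref{assump:theoremAssumptions} under this reduction. The range condition $\varepsilon_L \in [0,\tfrac{616}{625} - \tfrac{9}{625}\varepsilon_H)$ becomes $\varepsilon < \tfrac{616}{625} - \tfrac{9}{625}\varepsilon$, which rearranges to exactly $\varepsilon < \tfrac{616}{634}$, matching the corollary hypothesis. The size bounds $1 < m \leq p \leq n$, $nm\textbf{u} \leq \tfrac{1}{12}$, and $p^{3/2}\textbf{u} \leq \tfrac{1}{12}$ translate directly into the theorem's conditions (with $p_1 = p_2 = p$ when using $S_2 = I_p$). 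Substituting $\|S_2\|_2 = 1$, $p_1 = p_2 = p$, $\|S_1\|_F = \|S\|_F$, and $\varepsilon_1 = \varepsilon$ into the definition \eqref{eq:delta} of the multisketch $\delta$ reproduces \eqref{eq:delta1sketch} (up to the harmless $\sqrt{1+\varepsilon}$ factor that can be absorbed by noting $\sqrt{1+\varepsilon} \leq \sqrt{2}$ and adjusting constants, or by leaving the marginally stronger hypothesis in place).

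With the hypotheses of Theorem \ref{thm:randQRcond} verified in this reduced form, its conclusion delivers $\kappa(\hat{Q}) \leq 33/(25\sqrt{(1-\varepsilon_L)/(1+\varepsilon_H)} - 3) = 33/(25\sqrt{(1-\varepsilon)/(1+\varepsilon)} - 3)$ with probability at least $1-d$, which is the first bound of the corollary. For the explicit numerical bound, observe that $f(\varepsilon) = 33/(25\sqrt{(1-\varepsilon)/(1+\varepsilon)} - 3)$ is monotonically increasing on its domain, so its supremum on $[0, 0.9]$ is attained at $\varepsilon = 0.9$. At that endpoint, $\sqrt{(1-0.9)/(1+0.9)} = 1/\sqrt{19}$, yielding a denominator of $25/\sqrt{19} - 3 \approx 2.7356$ and thus $\kappa(\hat{Q}) \leq 33/2.7356 < 12.07$.

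I do not expect serious obstacles: the step that most deserves care is simply confirming that the multisketch machinery of Proposition \ref{cor:subspaceEmbeddingMultSketch} degenerates correctly when one factor is the deterministic identity. Since the identity contributes no failure probability and no loss of embedding quality, the union-of-failures argument collapses to the single sketch failure probability $d$, and every constant propagated through the proof of Theorem \ref{thm:randQRcond} retains its form. The only mild subtlety is the verification that the $\varepsilon < 616/634$ cutoff is the sharpest one consistent with the theorem's $\varepsilon_L < 616/625 - 9\varepsilon_H/625$ bound, which I carried out above.
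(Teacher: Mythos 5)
Your proposal follows essentially the same route as the paper: the paper proves all four corollaries at once by recasting the single sketch $S$ as $S_2S_1$ with $S_2 = I_{p,p}$ a $(0,0,m)$ oblivious $\ell_2$-subspace embedding, verifying that $\varepsilon \in [0,\tfrac{616}{634})$ is exactly the condition $\varepsilon_L \in [0,\tfrac{616}{625}-\tfrac{9}{625}\varepsilon_H)$ under $\varepsilon_L=\varepsilon_H=\varepsilon$, and then invoking Theorem \ref{thm:randQRcond}. Your additional observation that the substitution into \eqref{eq:delta} leaves a residual $\sqrt{1+\varepsilon}$ factor not present in \eqref{eq:delta1sketch}, and your explicit arithmetic check that $33/(25/\sqrt{19}-3) < 12.07$ at $\varepsilon = 0.9$, are both correct and slightly more careful than the paper's own treatment.
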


\vspace*{-7mm}
\begin{corollary}[\texttt{rand\_cholQR} Errors]
Suppose Assumptions \ref{assump:corrAssumptions} are satisfied.
Then with probability at least $1-d$, the $\hat{Q}, \hat{R}$ factors obtained with Algorithm \ref{ajh:alg:sketchQRChol}  (\texttt{rand\_cholQR}) has $O(\textbf{u})$ orthogonality error and $O(\textbf{u})\|V\|_2$ factorization error. In fact, 
\begin{equation}
    \|\hat{Q}^T\hat{Q}-I\|_2 \leq \frac{5445}{\left( 25\sqrt{\frac{1-\epsilon}{1+\epsilon}}-3\right)^2}\left(nm+m(m+1)\right)\textbf{u},
    \label{eq:lossOfOrthErrrandCholQRgeneral1sketch} \nonumber
\end{equation}
\vspace*{-4mm}
\begin{align}
    \|V - \hat{Q}\hat{R}\|_2 &\leq \left( \frac{56}{25\frac{1-\varepsilon}{\sqrt{1+\epsilon}}-3\sqrt{1-\varepsilon}} + \frac{1.5}{\sqrt{1-\varepsilon}}\sqrt{ 1+\frac{5445(nm+m(m+1))\textbf{u}}{\left( 25\sqrt{\frac{1-\varepsilon}{1+\varepsilon}}-3\right)^2} } \right)\nonumber \\
    & \quad \left( \sqrt{1+\varepsilon}\|V\|_2 + \frac{\sqrt{1-\varepsilon}}{12}\sigma_m(V)\delta\right)m^2\textbf{u} + \frac{\delta}{10}\sigma_m(V),
    \label{eq:repErrrandCholQRgeneral1sketch} \nonumber
\end{align}
where $\delta$ is bounded as in \eqref{eq:delta1sketch}.
\label{corr:randCholQRorth}
\end{corollary}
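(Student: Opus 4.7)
The plan is to obtain Corollary \ref{corr:randCholQRorth} as a direct specialization of Theorem \ref{thm:randCholQRorth}, using the paper's stated observation that a single sketch can be recast as a multisketch in which one of the two factors is the identity. Specifically, I would set $S_1 = S$ (with parameters $\varepsilon_1 = \varepsilon$, $d_1 = d$, $p_1 = p$) and $S_2 = I_p$. The identity trivially satisfies Definition \ref{def:subspaceEmbedding} with $\varepsilon_2 = 0$ deterministically, so it is a $(0,0,m)$ oblivious $\ell_2$-subspace embedding, and $S_2 S_1 = S$. Thus, running Algorithm \ref{ajh:alg:sketchQRChol} on $(V, S_1, S_2)$ produces exactly the same computation as the single-sketch \texttt{rand\_cholQR}$(V, S)$. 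Under this identification, $\varepsilon_L = \varepsilon_H = \varepsilon$, $d = d_1 + d_2 - d_1 d_2 = d$, and $\|S_2\|_2 = 1$.

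The second step is to verify that Assumption \ref{assump:corrAssumptions} implies all of Assumption \ref{assump:theoremAssumptions} under this identification. The range $\varepsilon \in [0, 616/634)$ translates to the condition $\varepsilon_L < 616/625 - (9/625)\varepsilon_H$ once $\varepsilon_L = \varepsilon_H = \varepsilon$, since the inequality reduces to $\varepsilon(625 + 9) < 616$. The unit-roundoff bound $nm\mathbf{u} \leq 1/12$ is identical, and $p^{3/2}\mathbf{u} \leq 1/12$ is precisely $p_1\sqrt{p_2}\mathbf{u} \leq 1/12$ with $p_1 = p_2 = p$. The size ordering $1 < m \leq p \leq n$ matches $1 < m \leq p_2 \leq p_1 \leq n$, and the full-rank hypothesis on $V$ is unchanged. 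Substituting $\|S_2\|_2 = 1$, $\varepsilon_1 = \varepsilon$, $p_1 = p_2 = p$, and $\|S_1\|_F = \|S\|_F$ into \eqref{eq:delta} yields the expression \eqref{eq:delta1sketch} (up to harmless constant factors from $\sqrt{1+\varepsilon} \leq \sqrt{2}$).

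The third step is mechanical: with all hypotheses of Theorem \ref{thm:randCholQRorth} in force, the bounds \eqref{eq:lossOfOrthErrrandCholQRgeneral} and \eqref{eq:repErrrandCholQRgeneral} specialize line-for-line to \eqref{eq:lossOfOrthErrrandCholQRgeneral1sketch} and \eqref{eq:repErrrandCholQRgeneral1sketch} once we set $\varepsilon_L = \varepsilon_H = \varepsilon$ throughout. The success probability of at least $1 - d$ is inherited directly from $1 - (d_1 + d_2 - d_1 d_2) = 1 - d$.

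I do not anticipate any deep obstacle here, since the excerpt explicitly calls out this reduction strategy. The only bookkeeping concern is to confirm that the specialization of the theorem's $\delta$ agrees with the corollary's $\delta$ up to at worst a constant factor; if a factor of $\sqrt{1+\varepsilon}$ or a similar harmless term does not cancel, one can either restate the corollary with that factor included or invoke the theorem with a slightly enlarged $\delta$, since a larger $\delta$ only weakens both sides of the inequality. No substantive new error analysis is required beyond the substitution, as all of the heavy lifting was already performed in Section \ref{appendix1} and in the proof of Theorem \ref{thm:randCholQRorth}.
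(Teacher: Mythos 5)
Your proposal follows essentially the same route as the paper: the authors also prove all four single-sketch corollaries at once by taking $S_2 = I_{p,p}$ as a $(0,0,m)$ oblivious $\ell_2$-subspace embedding, checking that $\varepsilon_L = \varepsilon_H = \varepsilon \in [0,\tfrac{616}{634})$ is equivalent to $\varepsilon_L < \tfrac{616}{625} - \tfrac{9}{625}\varepsilon_H$, and then specializing Theorem \ref{thm:randCholQRorth} directly. Your bookkeeping remark about the residual $\sqrt{1+\varepsilon}$ factor in the specialized $\delta$ is well taken --- the paper's \eqref{eq:delta1sketch} indeed drops that factor from the $pm^{3/2}$ term relative to the exact substitution into \eqref{eq:delta}, and your suggested fix (restate or enlarge $\delta$) is the right way to close that small gap, which the paper itself glosses over.
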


\begin{corollary}[Conditioning of \texttt{rand\_cholQR}]
Suppose Assumptions \ref{assump:corrAssumptions} are satisfied.
Then with probability at least $1-d$, the matrix $\hat{Q}$ obtained with Algorithm \ref{alg:randQR} satisfies $\kappa(\hat{Q}) \approx 1$. In fact,
\begin{equation}
    \kappa(\hat{Q}) < \sqrt{\frac{1+\frac{5445}{\left( 25\sqrt{\frac{1-\epsilon}{1+\epsilon}}-3\right)^2}\left(nm+m(m+1)\right)\textbf{u}}{1-\frac{5445}{\left( 25\sqrt{\frac{1-\epsilon}{1+\epsilon}}-3\right)^2}\left(nm+m(m+1)\right)\textbf{u}}} \cdot
    \label{eq:condQrandCholQR.cor} \nonumber
\end{equation}
Furthermore, if $\frac{5445}{\left( 25\sqrt{\frac{1-\epsilon}{1+\epsilon}}-3\right)^2}\left(nm+m(m+1)\right)\textbf{u} < \frac{1}{2}$, then 
\begin{equation}
    \kappa(\hat{Q}) < 1+\frac{10890}{\left( 25\sqrt{\frac{1-\epsilon}{1+\epsilon}}-3\right)^2}\left(nm+m(m+1)\right)\textbf{u}.
    \label{eq:condQrandCholQRsimpler.cor} \nonumber
\end{equation}
\label{corr:randCholQRcond}
\end{corollary}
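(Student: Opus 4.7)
The plan is to derive this corollary (and indeed all four corollaries in the single-sketch section) by specializing Theorem \ref{thm:randCholQRcond} to the case where the ``second'' sketch matrix is the identity. Concretely, set $S_1 = S \in \mathbb{R}^{p \times n}$ and $S_2 = I_p \in \mathbb{R}^{p \times p}$, and take $p_1 = p_2 = p$. The identity is trivially a $(0, 0, m)$ oblivious $\ell_2$-subspace embedding since $\langle I x, I y \rangle = \langle x, y\rangle$ holds with probability one. Consequently $\varepsilon_2 = 0$ and $d_2 = 0$, which gives $\varepsilon_L = \varepsilon_H = \varepsilon_1 = \varepsilon$ and $d = d_1 = d$. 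Under this specialization, Algorithm \ref{ajh:alg:sketchQRChol} with the pair $(S, I_p)$ executes exactly the single-sketch version of \texttt{rand\_cholQR} (the multiplication by $I_p$ is a no-op in both exact and floating-point arithmetic, up to identical output).

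Next I would check that the single-sketch Assumptions \ref{assump:corrAssumptions} imply the multisketch Assumptions \ref{assump:theoremAssumptions} under this identification. With $\varepsilon_L = \varepsilon_H = \varepsilon$, the constraint $\varepsilon_L \in [0, \tfrac{616}{625} - \tfrac{9}{625}\varepsilon_H)$ becomes $\varepsilon < \tfrac{616}{625} - \tfrac{9}{625}\varepsilon$, i.e.\ $634\varepsilon < 616$, which is exactly $\varepsilon \in [0, \tfrac{616}{634})$ as stated in Assumptions \ref{assump:corrAssumptions}. With $p_2 = p_1 = p$ and $\|S_2\|_2 = \|I_p\|_2 = 1$, the quantity $\delta$ from \eqref{eq:delta} collapses to the expression in \eqref{eq:delta1sketch}, and the auxiliary inequalities $nm\textbf{u} \le \tfrac{1}{12}$ and $p_1 \sqrt{p_2}\textbf{u} = p^{3/2}\textbf{u} \le \tfrac{1}{12}$ coincide.

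Having verified that the hypotheses of Theorem \ref{thm:randCholQRcond} hold, the conclusion of Corollary \ref{corr:randCholQRcond} follows by direct substitution of $\varepsilon_L = \varepsilon_H = \varepsilon$ into \eqref{eq:condQrandCholQR} and \eqref{eq:condQrandCholQRsimpler}. Both the probability-at-least-$(1-d)$ qualifier and the threshold condition $\tfrac{5445}{(25\sqrt{(1-\varepsilon)/(1+\varepsilon)} - 3)^2}(nm + m(m+1))\textbf{u} < \tfrac{1}{2}$ translate verbatim.

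There is essentially no mathematical obstacle here; the proof is a bookkeeping specialization. The only point requiring a brief sanity check is that the ``identity sketch'' argument does not introduce any extra floating-point error beyond what is already accounted for in the multisketch analysis. This is immediate because multiplication by $I_p$ is a no-op in IEEE arithmetic, so the error terms $\Delta \hat{W}_1$ corresponding to the second sketch in \eqref{eq:hatWdef} vanish identically, which only strengthens the intermediate bounds and preserves the final estimates. Thus the corollary is established as a direct consequence of Theorem \ref{thm:randCholQRcond}.
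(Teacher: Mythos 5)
Your proposal is correct and follows essentially the same route as the paper: the authors also prove all four single-sketch corollaries simultaneously by taking $S_2 = I_{p,p}$, observing it is a $(0,0,m)$ oblivious $\ell_2$-subspace embedding so that $\varepsilon_L = \varepsilon_H = \varepsilon$ and $d = d_1$, checking that $\varepsilon \in [0,\tfrac{616}{634})$ is equivalent to $\varepsilon_L < \tfrac{616}{625}-\tfrac{9}{625}\varepsilon_H$, and then reading the result off Theorem \ref{thm:randCholQRcond}. Your statement of the implication direction between Assumptions \ref{assump:corrAssumptions} and \ref{assump:theoremAssumptions} is in fact cleaner than the paper's own wording.
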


\begin{proof}
    We prove Corollaries \ref{cor:randQRorth}--\ref{corr:randCholQRcond} simultaneously by
considering one subspace embedding $S = S_1$ is equivalent to two subspace embeddings $S_2S_1$ simply by interpreting $S_2 = I_{p,p}$ as the $p \times p$ identity, which is by definition a $(0,0,m)$ oblivious $\ell_2$-subspace embedding, therefore giving $\varepsilon_1 = \varepsilon_H = \varepsilon_L = \varepsilon$, $d = d_1$, $p = p_2 = p_1$, $\varepsilon_2 = 0$ and $d_2 = 0$.

We show next that if  $\varepsilon = \varepsilon_L = \varepsilon_H \in [0, \frac{616}{634})$,
then $\varepsilon_L \in [0, \frac{616}{625}-\frac{9}{625}\varepsilon_H)$.
Indeed, $\varepsilon_L \in [0, \frac{616}{625}-\frac{9}{625}\varepsilon_H)$ is equivalent in this case to $0 \leq \varepsilon < \frac{616}{625}-\frac{9}{625}\varepsilon$,\linebreak or $0 \leq \frac{634}{625} \varepsilon < \frac{616}{625}$, or what is the same, $\varepsilon \in [0, \frac{616}{634})$.
This means that when $\varepsilon_H = \varepsilon_L = \varepsilon \in [0, \frac{616}{634})$,
    Assumptions \ref{assump:theoremAssumptions} imply Assumptions \ref{assump:corrAssumptions}. Thus, Assumptions~\ref{assump:theoremAssumptions} are satisfied
    and Corollaries \ref{cor:randQRorth}--\ref{corr:randCholQRcond} are direct consequences of Theorems~\ref{thm:randQRorthpt2}--~\ref{thm:randCholQRcond}.
\end{proof}

\begin{remark}
    { \rm
    Observe that \eqref{eq:delta1sketch} in Assumptions \ref{assump:corrAssumptions} is identical to \eqref{eq:delta} in Assumptions \ref{assump:theoremAssumptions} using $S_1 = S$ and $S_2 = I_{p, p}$, where $p_1 = p_2 = p$. However, the analysis in Section \ref{sec:matMulErr} of $\|E_1\|_2$ takes into account roundoff errors for two matrix multiplications for two sketches to compute $\hat{W}$, while in the case of Corollaries \ref{cor:randQRorth}--\ref{corr:randCholQRcond}, only one sketch and therefore one matrix multiplication to compute $\hat{W}$ is necessary. Therefore, we are over-estimating the error $\|E_1\|_2$ in the single sketch case, and if the analysis in Sections \ref{appendix1} were carefully performed again, we could tighten the bound on $\delta$ in \eqref{eq:delta1sketch}, thereby loosening the requirements on $\kappa(V)$ in Assumptions \ref{assump:corrAssumptions}. However, asymptotically, the requirement on $\kappa(V)$ would ultimately still be that $\delta~\leq~g(n,m,p_1,p_2)\textbf{u}~\kappa(V) \leq 1$ for some low-degree polynomial $g$.
    }
\end{remark}

\section{Numerical Experiments} \label{numer:sec}

We conducted numerical experiments with two goals in mind. First, we compare the performance of \texttt{rand\_cholQR} with the performance of \texttt{cholQR2}, \texttt{sCholQR3}, and Householder QR on a high-performance GPU leveraging vendor-optimized libraries. Second, we empirically validate the bounds given in Section \ref{sec:stateKeyResults}, and more generally, compare the stability of \texttt{rand\_cholQR} to the stability of \texttt{cholQR2}, \texttt{sCholQR3}, and Householder QR.

\subsection{Implementation Details}

We implemented \texttt{rand\_cholQR}, \texttt{cholQR2}, \texttt{sCholQR3}, and Householder QR in C\texttt{++}. To be portable to a GPU, we used the Kokkos Performance Portability Library \cite{Kokkos} and Kokkos Kernels~\cite{kokkos-kernels}. For our experiments on an NVIDIA GPU, we configured and built our code such that Kokkos Kernels calls NVIDIA's cuBLAS and cuSPARSE linear algebra libraries for optimized dense and sparse basic linear algebra routines \cite{cuBLAS, cuSPARSE}. To perform LAPACK routines that are not currently available natively within Kokkos Kernels (i.e., \texttt{dgeqrf} and \texttt{dorgqr} for computing the Householder QR factorization, and \texttt{dpotrf} for the Cholesky factorization), we directly called NVIDIA's cuSOLVER linear algebra library \cite{lapackUserGuide, cuda, cuSOLVER}. Test results were obtained using Kokkos 3.7.01, Cuda 11.7.99, and GCC 7.2.0 on an AMD EPYC 7742 64-Core 2.25GHz CPU with an NVIDIA A100-SXM4 40GB GPU. All computations were done in double precision, so $\textbf{u} = 2^{-52} \approx 10^{-16}$.

\subsection{Performance Results}\label{sec:perfResults}

We tested \texttt{rand\_cholQR} with a variety of sketching strategies. The simplest was the case of a Gaussian sketch $S = \frac{1}{\sqrt{p}}G \in \mathbb{R}^{p \times n}$, 
which was chosen to embed $V \in \mathbb{R}^{n \times m}$ with distortion $\epsilon = 0.99$, using a sketch size of 
$p = \lceil 36.01\log(m) \rceil$
to produce a $(0.99,1/m, m)$ oblivious $\ell_2$-subspace embedding \cite[Lemma 4.1]{GaussSize}. We tested the CountSketch by explicit construction of the sparse matrix and applied it using a sparse-matrix vector product. The sketch size used with a $S \in \mathbb{R}^{p \times n}$ CountSketch matrix was $p = \lceil 6.8 (m^2+m) \rceil$, which can be shown to be a $(0.99, 0.15, m)$ oblivious $\ell_2$-subspace embedding \cite[Theorem 1]{CountSketchSize}.

In our implementation of multisketching, we chose $S_1 \in \mathbb{R}^{p_1\times n}$ as a CountSketch with $\varepsilon_1 = 0.9$ requiring sketch size $p_1 = \lceil 8.24 (m^2+m) \rceil$ to produce a $(0.9,0.15,m)$ oblivious $\ell_2$-subspace embedding, and $S_2 \in \mathbb{R}^{p_2 \times p_1}$ a Gaussian sketch with $p_2 = \lceil 74.3\log(p_1) \rceil$ giving $\varepsilon_2 = 0.49$ to give a $(0.49, 1/m, m)$ oblivious $\ell_2$-subspace embedding. Overall, $S_2S_1$ produced an embedding with $\varepsilon_L \approx 0.9490$, $\varepsilon_H \approx 1.8310$, and $d \approx 0.15$. It is easily verified that $S_2S_1$ is in line with Assumptions~\ref{assump:theoremAssumptions}, and that both of $S_1$ and $S_2$ satisfy 
Assumptions~\ref{assump:corrAssumptions}, ensuring the analysis in Section~\ref{sec:stateKeyResults} is relevant to the experiments. Runtimes of \texttt{rand\_cholQR} did not include the time to generate the sketch, as this was assumed to be a fixed overhead time.


\begin{figure}
\begin{subfigure}{0.5\textwidth}
  \centering
  \includegraphics[width=\linewidth]{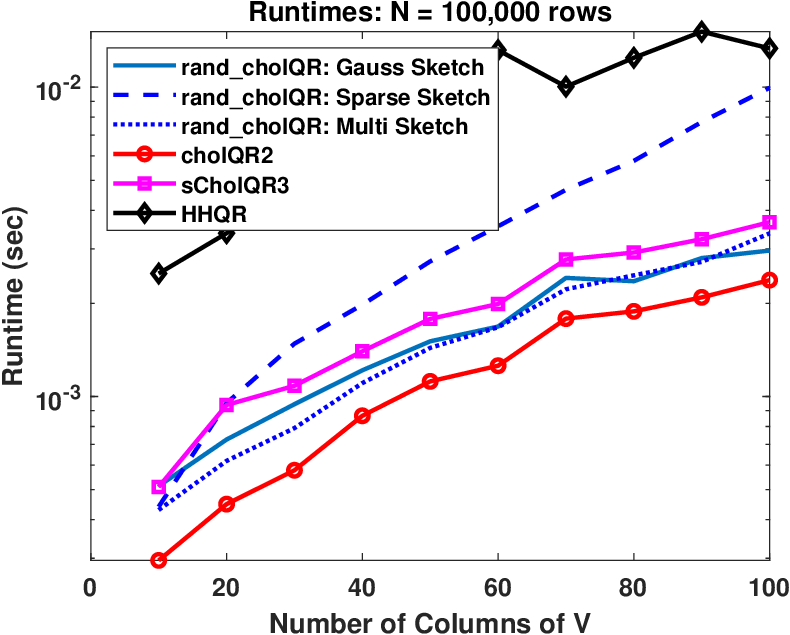}
  \caption{Raw runtimes}
  \label{fig:perf_1e5}
\end{subfigure}
\begin{subfigure}{0.5\textwidth}
  \centering
  \includegraphics[width=\linewidth]{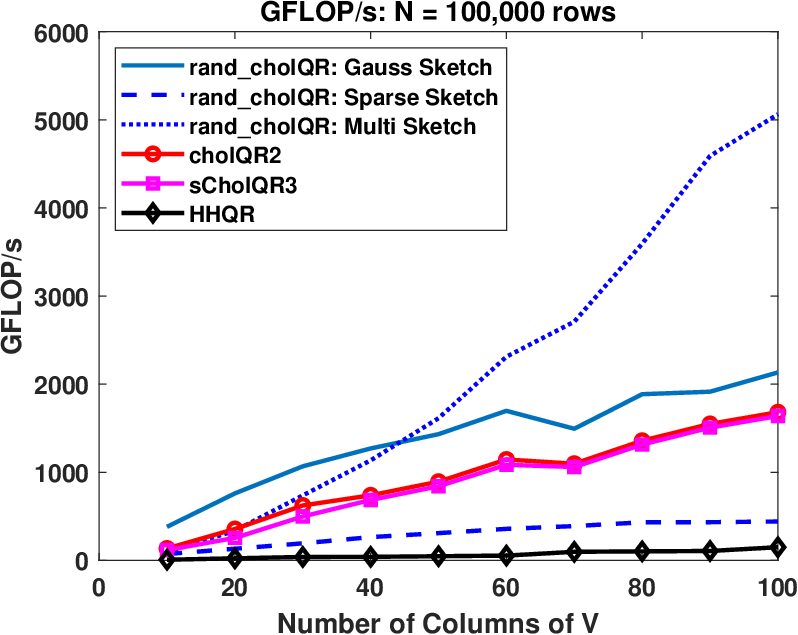}
  \caption{GFLOP/s}
  \label{fig:gflops_1e5}
\end{subfigure}
\begin{center}
\begin{subfigure}{0.5\textwidth}
  \centering
  \includegraphics[width=\linewidth]{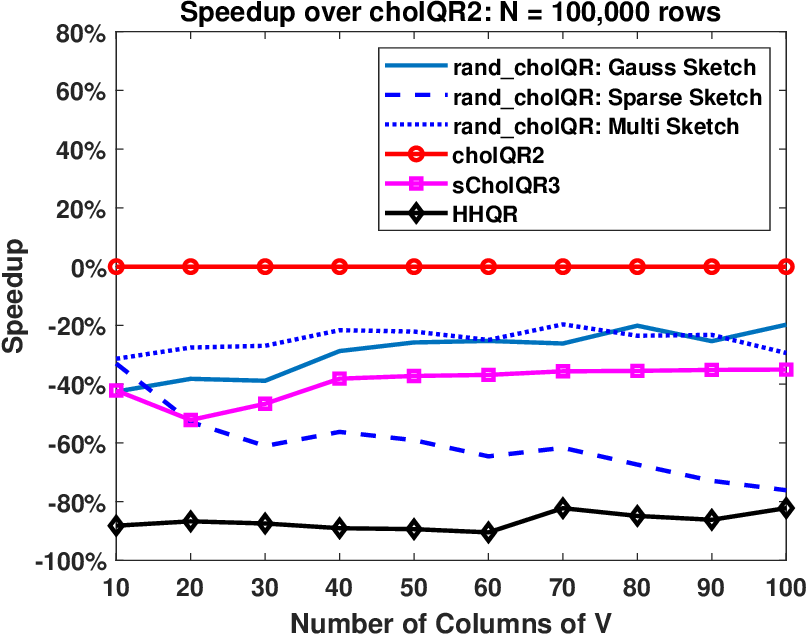}
  \caption{Speedup over \texttt{cholQR2}}
  \label{fig:speedup_1e5}
\end{subfigure}
\end{center}%
\caption[Runtimes of QR factorizations of $V$ with a fixed number of rows as the number of columns vary]{Performance of QR algorithms on matrices with $n = 100,000$ rows.}
\label{fig:perf_results_1e5}
\end{figure}

\begin{figure}
\begin{subfigure}{0.5\textwidth}
  \centering
  \includegraphics[width=\linewidth]{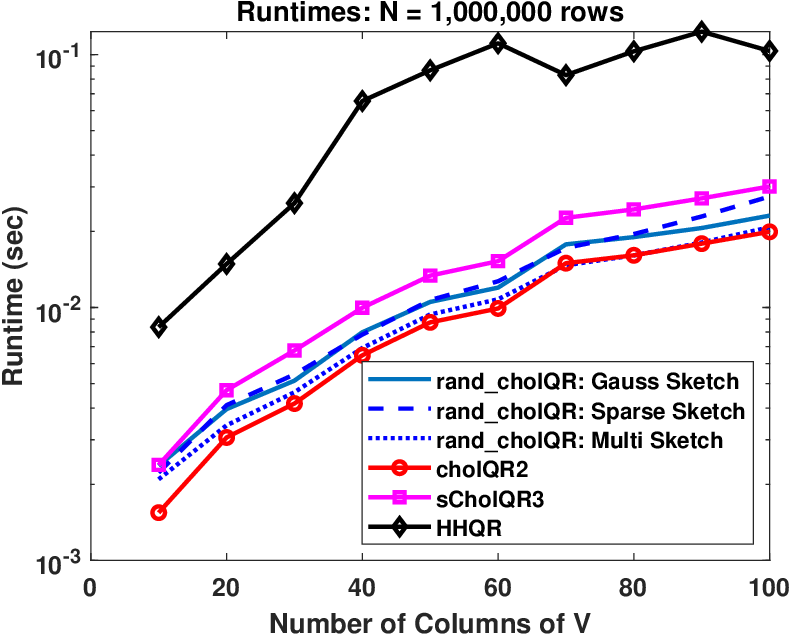}
  \caption{Raw runtimes}
  \label{fig:perf_1e6}
\end{subfigure}
\begin{subfigure}{0.5\textwidth}
  \centering
  \includegraphics[width=\linewidth]{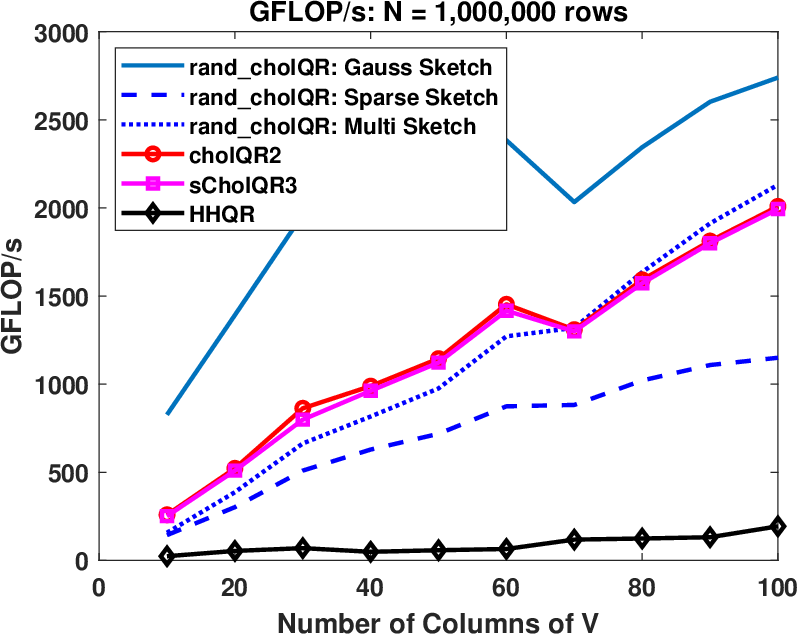}
  \caption{GFLOP/s}
  \label{fig:gflops_1e6}
\end{subfigure}
\begin{center}
\begin{subfigure}{0.5\textwidth}
  \centering
  \includegraphics[width=\linewidth]{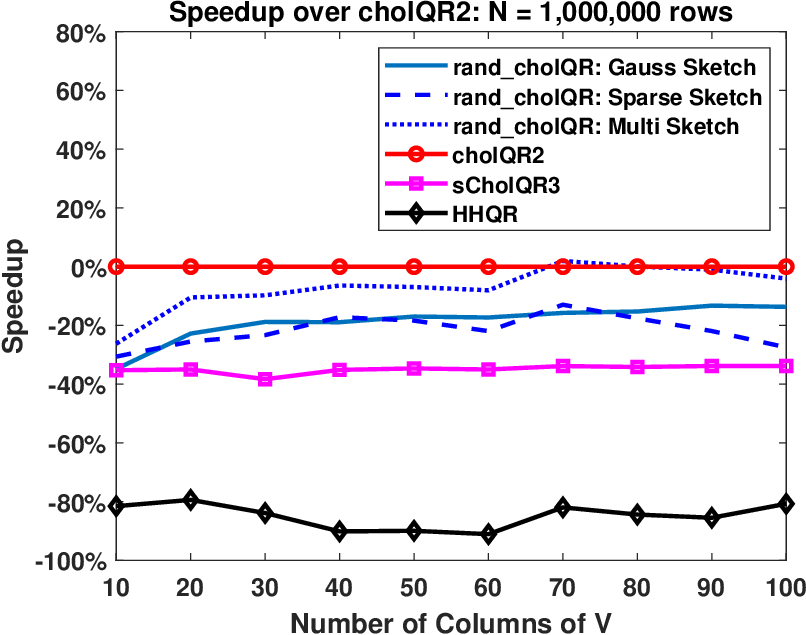}
  \caption{Speedup over \texttt{cholQR2}}
  \label{fig:speedup_1e6}
\end{subfigure}
\end{center}%
\caption[Runtimes of QR factorizations of $V$ with a fixed number of rows as the number of columns vary]{Performance of QR algorithms on matrices with $n = 1,000,000$ rows.}
\label{fig:perf_results_1e6}
\end{figure}

\begin{figure}
\begin{subfigure}{0.5\textwidth}
  \centering
  \includegraphics[width=\linewidth]{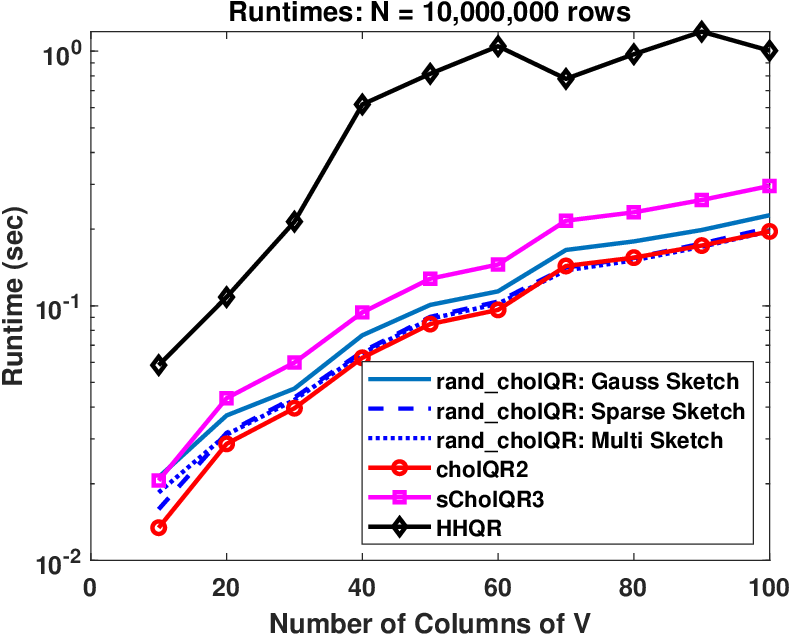}
  \caption{Raw runtimes}
  \label{fig:perf_1e7}
\end{subfigure}
\begin{subfigure}{0.5\textwidth}
  \centering
  \includegraphics[width=\linewidth]{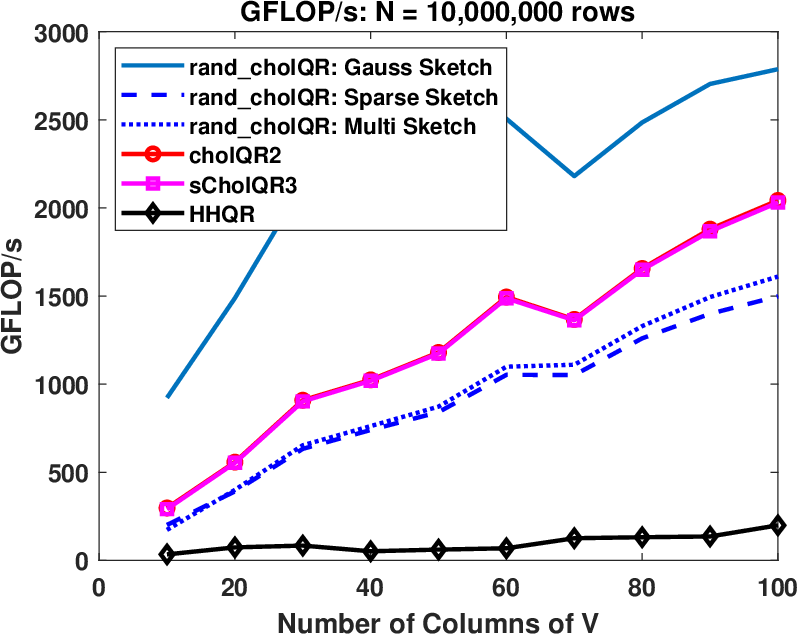}
  \caption{GFLOP/s}
  \label{fig:gflops_1e7}
\end{subfigure}
\begin{center}
\begin{subfigure}{0.5\textwidth}
  \centering
  \includegraphics[width=\linewidth]{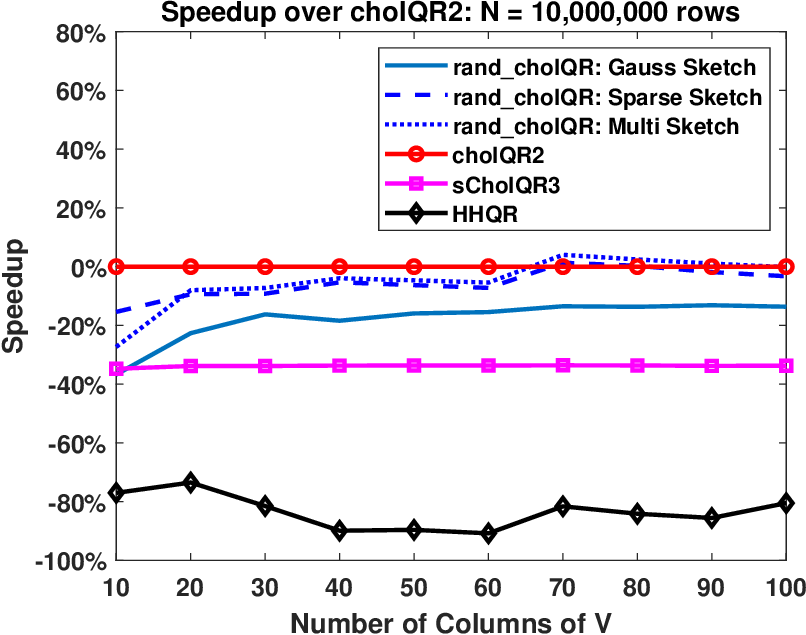}
  \caption{Speedup over \texttt{cholQR2}}
  \label{fig:speedup_1e7}
\end{subfigure}
\end{center}%
\caption[Runtimes of QR factorizations of $V$ with a fixed number of rows as the number of columns vary]{Performance of QR algorithms on matrices with $n = 10,000,000$ rows.}
\label{fig:perf_results_1e7}
\end{figure}


Figures \ref{fig:perf_results_1e5}--\ref{fig:perf_results_1e7} show the performance of each QR method for test problems with $n = 10^5-10^7$ rows and $m = 10$--$100$ columns. Within each figure, subfigure (a) shows the runtimes of each algorithm, and subfigure (b) shows the GFLOP/s of each algorithm.  Since \texttt{cholQR2} is typically expected to be the fastest algorithm, subfigure (c) shows the relative speedup of each QR method compared to \texttt{cholQR2}. 
 
 Figures \ref{fig:perf_results_1e5}--\ref{fig:perf_results_1e7} indicate that \texttt{cholQR2} is indeed the fastest method in general, while multisketch \texttt{rand\_cholQR} performs the closest to \texttt{cholQR2}. Additionally, Figures~\ref{fig:perf_results_1e6} (c) and \ref{fig:perf_results_1e7} (c) show that in some cases, multisketch \texttt{rand\_cholQR} actually outperforms \texttt{cholQR2}, specifically for $n = 10^6$ rows and $m = 70$ columns, and for $n 
= 10^7$ rows and $m = 70$--$90$ columns. The most notable result is that for $n = 10^7$ rows and $m = 70$ columns, multisketch \texttt{rand\_cholQR} is 4\% faster than \texttt{cholQR2}. Multisketch \texttt{rand\_cholQR} is significantly faster than \texttt{sCholQR3}, as evidenced by Figures~\ref{fig:perf_results_1e5}--\ref{fig:perf_results_1e7}, and both algorithms have the same $O(\textbf{u})\kappa(V) < 1$ stability requirement.

Subfigure (b) of Figures \ref{fig:perf_results_1e5}--\ref{fig:perf_results_1e7} demonstrate that the implementations and algorithms are indeed high-performance, with multisketched \texttt{rand\_cholQR} achieving up to 5,000 GFLOP/s on a single GPU. The GFLOP/s of multisketched \texttt{rand\_cholQR} is so high in Figure \ref{fig:eps_results_1e5} because $p_1$, the number of columns used in the Gaussian matrix, is nearly the same as $n$. Thus, it does more computations than both the single-sketched Gaussian version of the algorithm and \texttt{cholQR2}, in spite of the fact that it does not take much more time than either of these algorithms. Note that in these plots, \texttt{cholQR2} and \texttt{sCholQR3} are nearly overlapping, as they perform the exact same operations, but \texttt{sCholQR3} does 50\% more computation.

\subsection{Empirical Error Analysis}\label{sec:numResults}
Figure \ref{fig:cond_test} shows the orthogonalization error $\| I - \hat{Q}^T\hat{Q} \|_F$ and the relative factorization error $\| V - \hat{Q}\hat{R}\|_F/\|V\|_F$ for condition number $\kappa(V) \in [1,10^{16}]$. The results demonstrate that \texttt{rand\_cholQR} maintains $O(\textbf{u})$ orthogonality error and $O(\textbf{u})\|V\|_2$ factorization error\footnote{This follows because $\| V - \hat{Q}\hat{R}\|_F/\|V\|_F = O(\textbf{u})$.} while $\kappa(V) < O(\textbf{u}^{-1})$, as predicted by Theorem~\ref{thm:randCholQRorth}, and is more robust than \texttt{cholQR2} and \texttt{sCholQR3}. In practice, it appears that \texttt{rand\_cholQR} is stable even when $V$ is numerically rank-deficient. In summary, Figures \ref{fig:perf_results_1e5}--\ref{fig:cond_test} demonstrate that multisketch \texttt{rand\_cholQR} significantly improves the robustness of \texttt{cholQR2} and \texttt{sCholQR3} at little to no cost, therefore making \texttt{rand\_cholQR} a superior high-performance QR algorithm.
Additionally, observe that, as indicated by a large dot, lines for \texttt{cholQR2} and \texttt{sCholQR3} end at $\kappa(V) = 10^8$ and $\kappa(V) = 10^{12}$ respectively, as the methods fail beyond these points.

\begin{figure}
  \begin{subfigure}{0.5\textwidth}
  \centering
  \includegraphics[width=\linewidth]{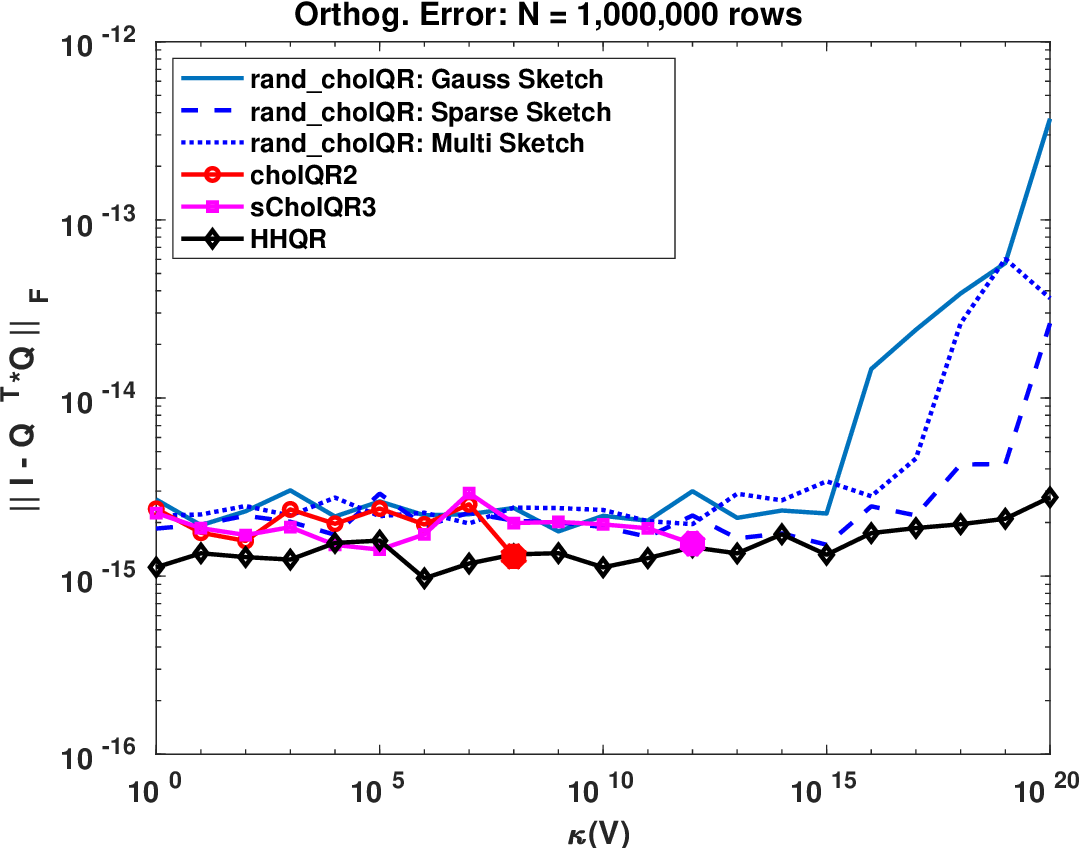}
  \caption{Orthogonality Error}
  \label{fig:orth_err}
\end{subfigure}
\begin{subfigure}{0.5\textwidth}
  \centering
  \includegraphics[width=\linewidth]{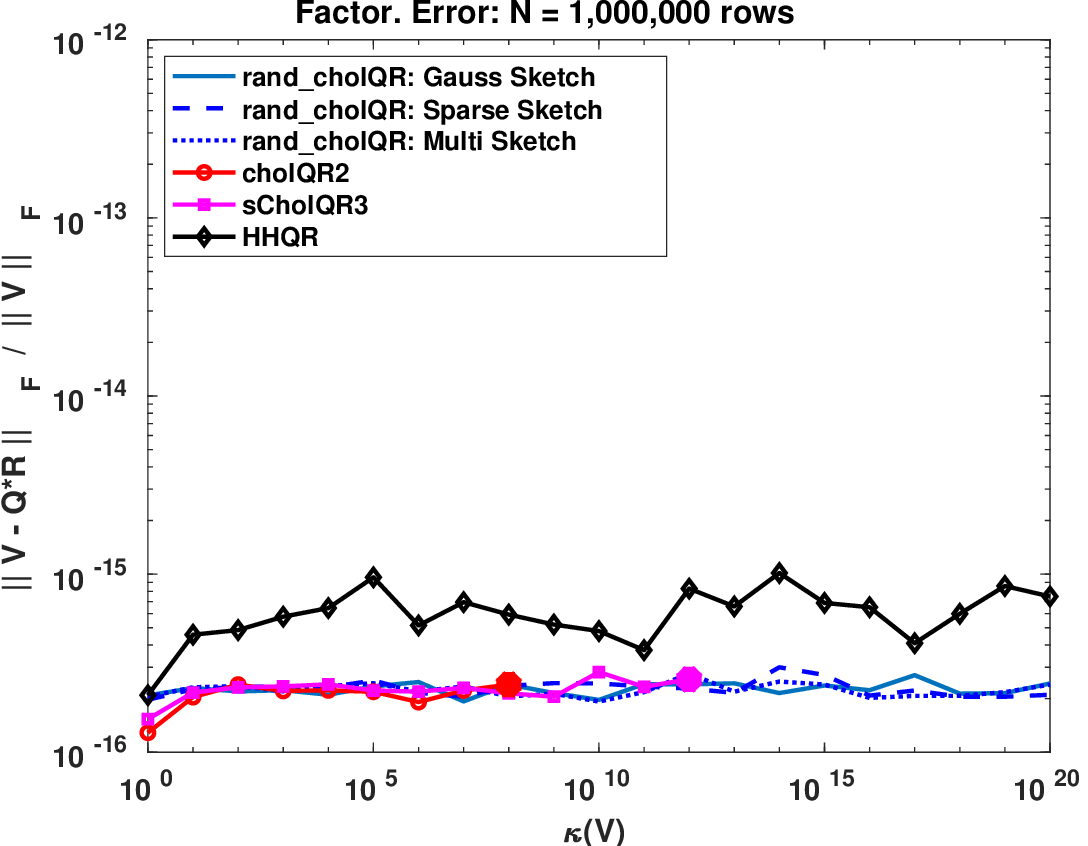}
  \caption{Relative Factorization Error}
  \label{fig:fact_err}
\end{subfigure}%
\caption[Orthogonality and relative factorization error of QR factorizations of a matrix $V$ with varying condition number.]{Orthogonality (left) and relative factorization error (right) of the QR factorization of a matrix $V$ with varying condition number. To explicitly control $\kappa(V)$, $V := L\Sigma R^T \in \mathbb{R}^{n \times m}$ using random orthogonal matrices $L, R$, and a diagonal $\Sigma$ with log-equispaced entries in the range $[\kappa^{-\frac{1}{2}}(V),\kappa^{\frac{1}{2}}(V)]$. Indicated by a large dot, lines for \texttt{cholQR2} and \texttt{sCholQR3} end at $\kappa(V) = 10^8$ and $\kappa(V) = 10^{12}$ respectively, as the methods fail beyond these points.}
\label{fig:cond_test}
\end{figure}

\subsection{Effect of Larger Distortion Factor $\epsilon$ in \texttt{rand\_cholQR}}\label{sec:epsResults}

In Section \ref{sec:relWork}, we claimed that the stability guarantees for $\epsilon \approx 1$ is an improvement over existing results only providing stability guarantees up to $\epsilon \approx 0.5$ as the larger values of $\epsilon$ allow for better performance. In this subsection, we include experiments that justify this and highlight why theoretical results for larger values of $\epsilon$ should be studied further in the context of randomized QR factorizations.

In Figures \ref{fig:eps_results_1e5}--\ref{fig:eps_results_1e7}, we begin by demonstrating the speedup attained by using a single sketch with $\epsilon = 0.99$ over a sketch with $\epsilon = 0.49$ where subfigures (a) and (b) correspond to the Gaussian sketch and CountSketch experiments, respectively. Thereafter, we demonstrate the speedup in attained in the multisketch case using a larger $\epsilon_1 = 0.9$ compared to $\epsilon_1 = 0.49$ in subfigure (c). In the multisketch case, we keep $\epsilon_2 = 0.49$ fixed, and did not push $\epsilon_1$ over 0.9 to ensure Assumptions \ref{assump:theoremAssumptions} and therefore the theoretical results in Section \ref{sec:stateKeyResults} hold.

Specifically, in the case of the Gaussian experiments (Figures \ref{fig:eps_results_1e5}--\ref{fig:eps_results_1e7} (a)), we used $p = \lceil 36.01\log(m) \rceil$ to attain distortion factor $\epsilon = 0.99$ and $p = \lceil 74.3\log(m) \rceil$ to attain distortion factor $\epsilon = 0.49$\footnote{These sketch sizes can be shown to produce $(0.99,1/m, m)$ and $(0.49,1/m, m)$ oblivious $\ell_2$-subspace embeddings, respectively}. In the case of the CountSketch (Figures \ref{fig:eps_results_1e5}--\ref{fig:eps_results_1e7} (b)), we used sketch size $p = \lceil 6.8 (m^2+m) \rceil$ to attain $\epsilon = 0.99$ and sketch size $p = \lceil 27.8 (m^2+m) \rceil$ to attain $\epsilon = 0.49$ \footnote{This can be shown to attain $(0.99, 0.15, m)$ and $(0.49, 0.15, m)$ oblivious $\ell_2$-subspace embeddings, respectively.}. Finally, in the multisketch case (Figures \ref{fig:eps_results_1e5}--\ref{fig:eps_results_1e7} (c)), the first sketch was a CountSketch with embedding dimension $p_1 = \lceil 8.24 (m^2+m) \rceil$ to attain $\varepsilon_1 = 0.9$ and embedding dimension $p_1 = \lceil 27.8 (m^2+m) \rceil$ to attain $\epsilon = 0.49$\footnote{These choices can be shown to produce $(0.9,0.15,m)$ and $(0.49,0.15,m)$ oblivious $\ell_2$-subspace embeddings, respectively}.


\begin{figure}
\begin{subfigure}{0.5\textwidth}
  \centering
  \includegraphics[width=\linewidth]{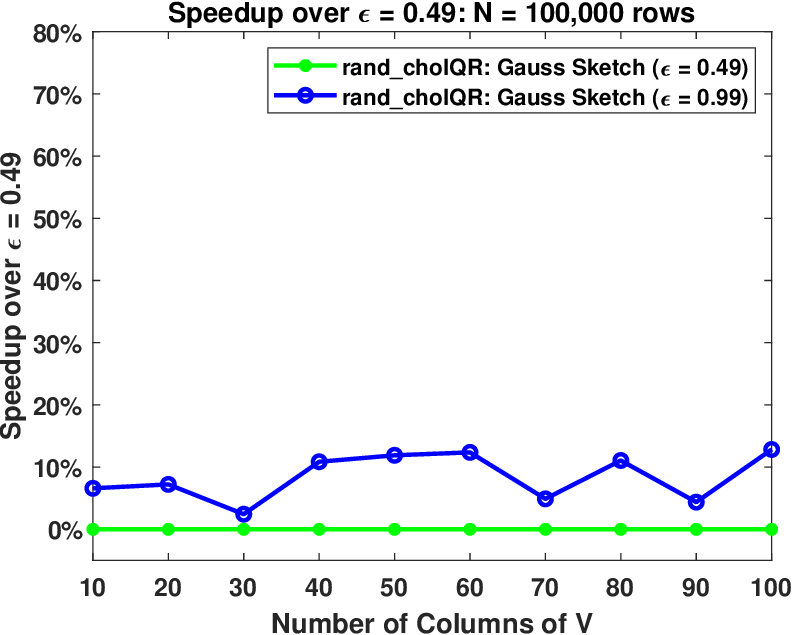}
  \caption{Gaussian Sketch}
  \label{fig:gauss_speedup_1e5}
\end{subfigure}
\begin{subfigure}{0.5\textwidth}
  \centering
  \includegraphics[width=\linewidth]{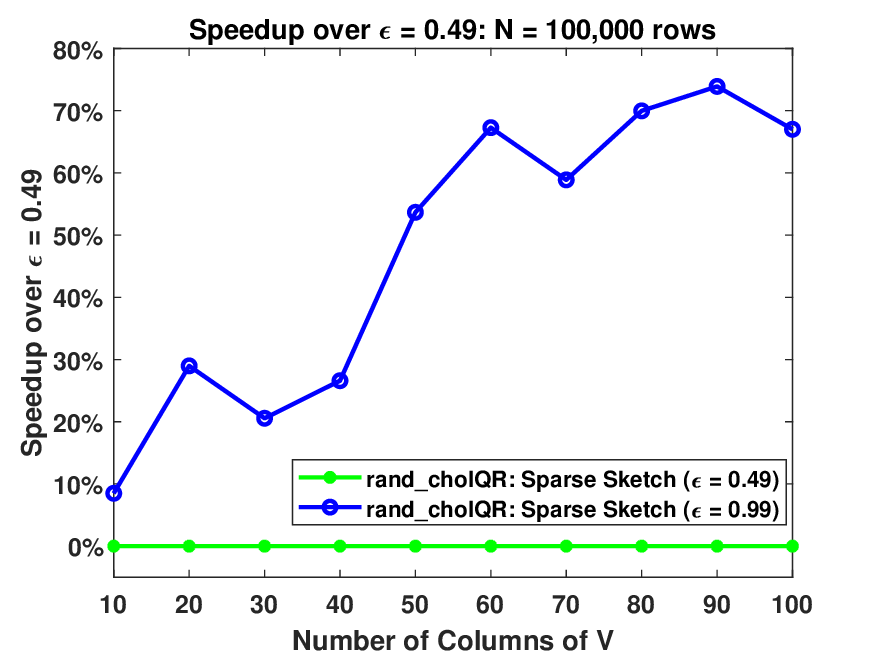}
  \caption{Sparse Sketch}
  \label{fig:sparse_speedup_1e5}
\end{subfigure}
\begin{center}
\begin{subfigure}{0.5\textwidth}
  \centering
  \includegraphics[width=\linewidth]{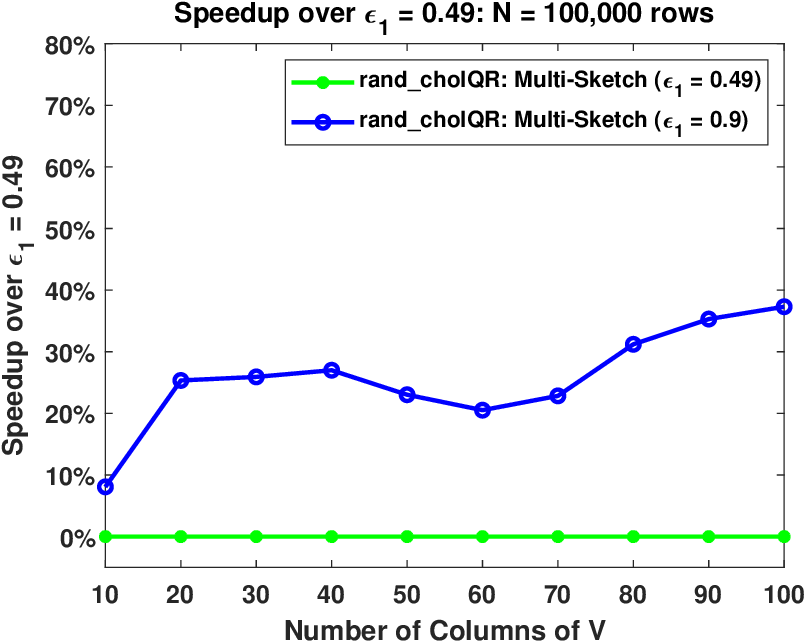}
  \caption{Multi-Sketch}
  \label{fig:multi_speedup_1e5}
\end{subfigure}
\end{center}%
\caption[Runtimes of QR factorizations of $V$ with a fixed number of rows as the number of columns vary]{Speedup attained in \texttt{rand\_cholQR} using different values of $\epsilon$, $n = 100,000$.}
\label{fig:eps_results_1e5}
\end{figure}

\begin{figure}
\begin{subfigure}{0.5\textwidth}
  \centering
  \includegraphics[width=\linewidth]{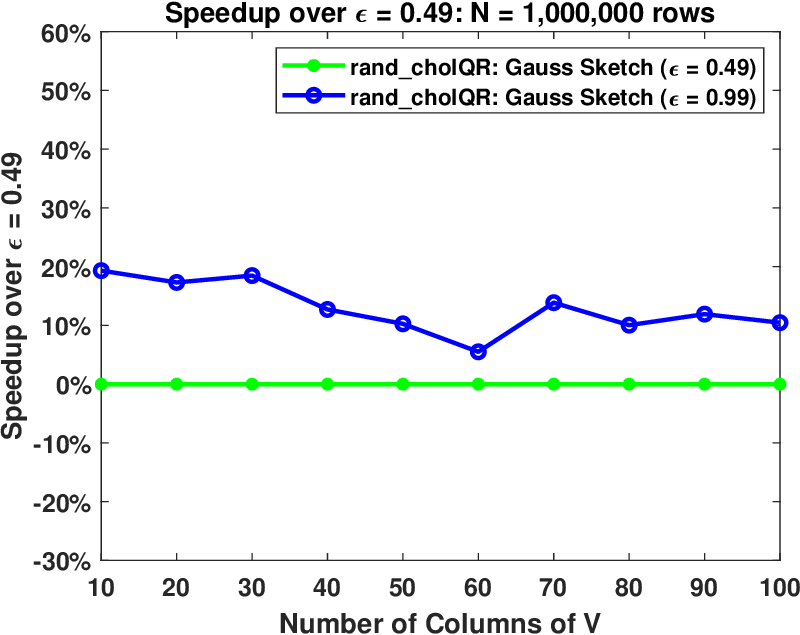}
  \caption{Gaussian Sketch}
  \label{fig:gauss_speedup_1e6}
\end{subfigure}
\begin{subfigure}{0.5\textwidth}
  \centering
  \includegraphics[width=\linewidth]{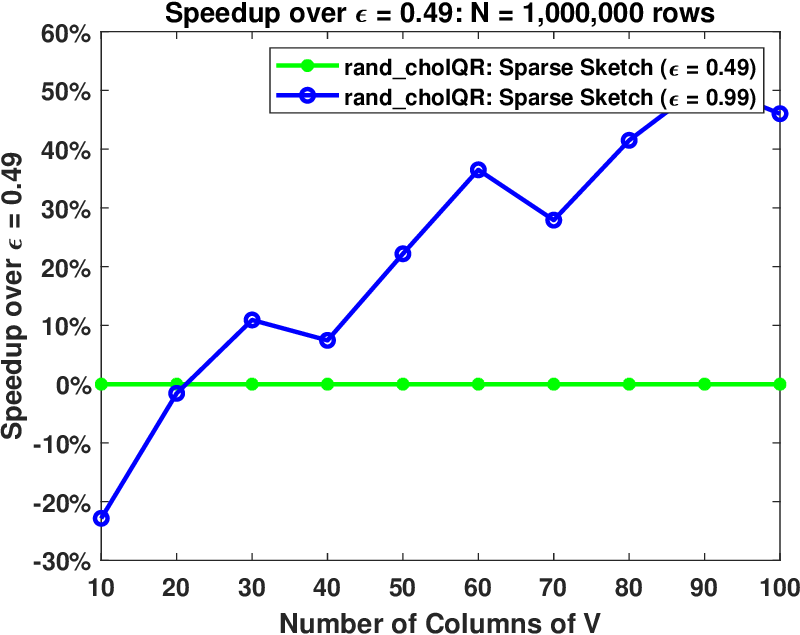}
  \caption{Sparse Sketch}
  \label{fig:sparse_speedup_1e6}
\end{subfigure}
\begin{center}
\begin{subfigure}{0.5\textwidth}
  \centering
  \includegraphics[width=\linewidth]{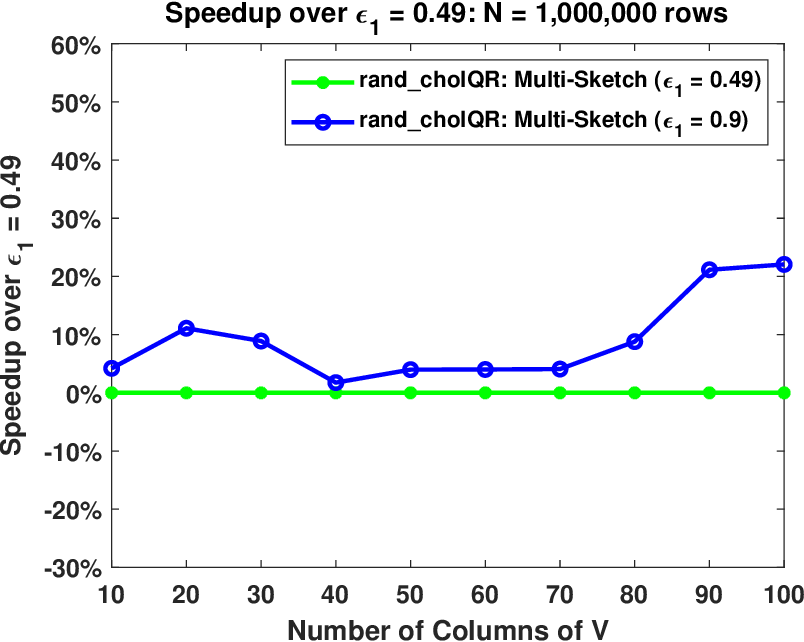}
  \caption{Multi-Sketch}
  \label{fig:multi_speedup_1e6}
\end{subfigure}
\end{center}%
\caption[Runtimes of QR factorizations of $V$ with a fixed number of rows as the number of columns vary]{Speedup attained in \texttt{rand\_cholQR} using different values of $\epsilon$, $n = 1,000,000$.}
\label{fig:eps_results_1e6}
\end{figure}

\begin{figure}
\begin{subfigure}{0.5\textwidth}
  \centering
  \includegraphics[width=\linewidth]{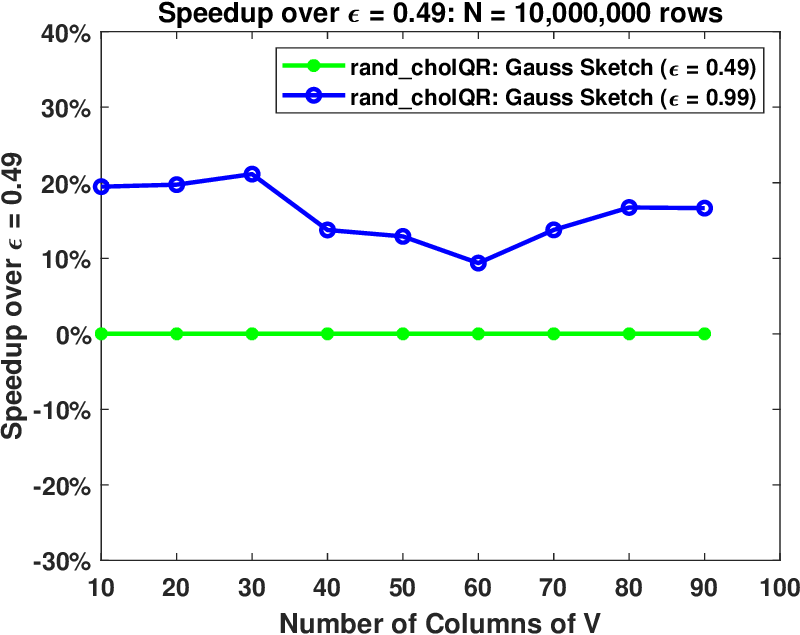}
  \caption{Gaussian Sketch}
  \label{fig:gauss_speedup_1e7}
\end{subfigure}
\begin{subfigure}{0.5\textwidth}
  \centering
  \includegraphics[width=\linewidth]{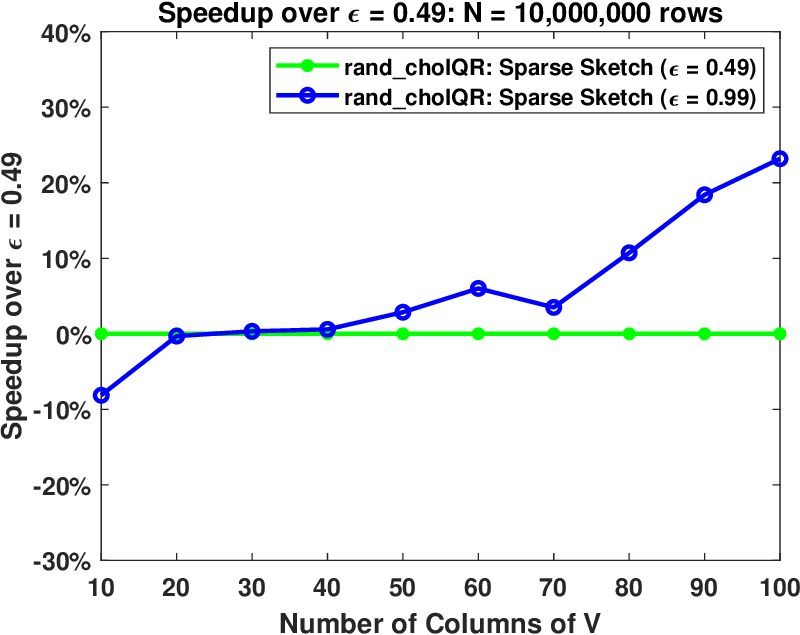}
  \caption{Sparse Sketch}
  \label{fig:sparse_speedup_1e7}
\end{subfigure}
\begin{center}
\begin{subfigure}{0.5\textwidth}
  \centering
  \includegraphics[width=\linewidth]{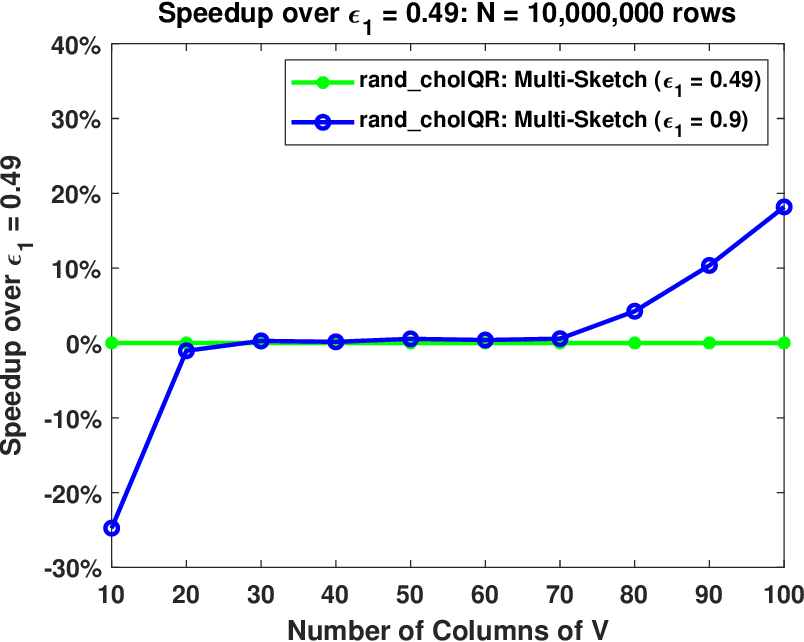}
  \caption{Multi-Sketch}
  \label{fig:multi_speedup_1e7}
\end{subfigure}
\end{center}%
\caption[Runtimes of QR factorizations of $V$ with a fixed number of rows as the number of columns vary]{Speedup attained in \texttt{rand\_cholQR} using different values of $\epsilon$, $n = 10,000,000$.}
\label{fig:eps_results_1e7}
\end{figure}

As predicted, the results indicate that the in most cases, the impact of choosing a larger $\epsilon$ allows for significant runtime improvements in the randomized QR routine tested. Presumably, these results would translate to other randomized methods where a large distortion is acceptable, such as in pre-processing/preconditioning procedures or inexact orthogonalization schemes, where only linear independence is strictly necessary.

\section{Conclusions and Future Work}

The results in Section~\ref{sec:stateKeyResults} indicate that \texttt{rand\_cholQR} using one or two sketch matrices orthogonalizes any numerically full-rank matrix $V$ up to $O(\textbf{u})$ error.
\linebreak
This is a significant improvement over CholeskyQR2, which requires 
\linebreak
$\kappa(V) \lessapprox \textbf{u}^{-1/2}$ to ensure a stable factorization. Our results for a single sketch apply for any $\varepsilon$-embedding with $\varepsilon \in [0, \frac{616}{634})$, covering nearly the entire possible range for $\varepsilon$-embeddings.

Our performance results in Section \ref{numer:sec} indicate that the significantly better stability properties of \texttt{rand\_cholQR} over \texttt{cholQR2} come at virtually no increase in the factorization time on a modern GPU. Additionally, \texttt{rand\_cholQR} is theoretically just as stable and in practice more stable than \texttt{sCholQR3}, while being substantially faster. This is due to the fact that \texttt{rand\_cholQR} and \texttt{cholQR2} incur the same number of processor synchronizations, while leveraging mostly BLAS-3 or optimized sparse matrix-vector routines for most of the required computation. In fact, \texttt{rand\_cholQR} can perform better than \texttt{cholQR2} when using the multisketch framework. Of the sketching strategies considered, the multisketch framework is the most advantageous, likely because it requires little additional storage compared to \texttt{cholQR2}, and applying the sketches in this framework is extremely cheap.

Future work includes applying \texttt{rand\_cholQR} to Krylov subspace methods that require tall-and-skinny QR factorizations, particularly block~\cite{Gutknecht, OLearyBCG}, 
\linebreak
\mbox{$s$-step}~\cite{sStepCG, Hoemmen, sStepGMRES}, and enlarged Krylov methods~\cite{EnlargedKrylov}, 
and further investigations into efficient multisketching implementations on a GPU, as our analysis is amenable to any multisketching strategy (not just a CountSketch followed by a dense Gaussian). In particular, applying the CountSketch matrix could potentially be optimized better than using a sparse-matrix vector multiplication by using a custom routine to add/subtract subsets of randomly selected rows in parallel using batched BLAS-1 routines, which should be investigated. Additionally, the performance of \texttt{randQR} and \texttt{rand\_cholQR} using dense Rademacher sketch matrices in place of dense Gaussian sketches as in \cite{GaussSize} should be investigated, as Rademacher sketches impose far lower storage requirements than a Gaussian sketch and can be generated much more efficiently.

\section*{Acknowledgements}

The authors appreciate very much the comments and questions by the handling editor Ilse Ipsen, and by the three
anonymous referees, which helped improved our presentation.

Sandia National Laboratories is a multimission laboratory managed and operated by National Technology and Engineering Solutions of Sandia, LLC., a wholly owned subsidiary of Honeywell International, Inc., for the U.S. Department of Energy’s National Nuclear Security Administration under contract DE-NA-0003525. This work was in part supported by the Exascale Computing Project (17-SC-20-SC), a collaborative effort of the U.S. Department of Energy Office of Science and the National Nuclear Security Administration.

\section*{Statements and Declarations}

\paragraph{Competing interests.}

The authors declare no competing interests.
 
\bibliography{ref.bib}
\newpage 
\appendix

\section{Raw Runtimes From Experiments}

\begin{table}[h]
\centering
\resizebox{\columnwidth}{!}{%
\begin{tabular}{c|rrrrrrrrrr}
 & \multicolumn{10}{c}{Columns}\\ Algorithm & 10 & 20 & 30 & 40 & 50 & 60 & 70 & 80 & 90 & 100 \\\hline\texttt{rand\_cholQR}: Gauss& \multirow{2}{*}{0.0005}& \multirow{2}{*}{0.0007}& \multirow{2}{*}{0.0009}& \multirow{2}{*}{0.0012}& \multirow{2}{*}{0.0015}& \multirow{2}{*}{0.0017}& \multirow{2}{*}{0.0024}& \multirow{2}{*}{0.0024}& \multirow{2}{*}{0.0028}& \multirow{2}{*}{0.0030}\\ 
($\epsilon = 0.99$) & & & & & & & & & & \\ 
\texttt{rand\_cholQR}: Gauss& \multirow{2}{*}{0.0005}& \multirow{2}{*}{0.0008}& \multirow{2}{*}{0.0010}& \multirow{2}{*}{0.0014}& \multirow{2}{*}{0.0017}& \multirow{2}{*}{0.0019}& \multirow{2}{*}{0.0025}& \multirow{2}{*}{0.0027}& \multirow{2}{*}{0.0029}& \multirow{2}{*}{0.0034}\\ 
($\epsilon = 0.49$) & & & & & & & & & & \\ 
\texttt{rand\_cholQR}: Sparse& \multirow{2}{*}{0.0004}& \multirow{2}{*}{0.0010}& \multirow{2}{*}{0.0015}& \multirow{2}{*}{0.0020}& \multirow{2}{*}{0.0027}& \multirow{2}{*}{0.0036}& \multirow{2}{*}{0.0047}& \multirow{2}{*}{0.0058}& \multirow{2}{*}{0.0077}& \multirow{2}{*}{0.0100}\\ 
($\epsilon = 0.99$) & & & & & & & & & & \\ 
\texttt{rand\_cholQR}: Sparse& \multirow{2}{*}{0.0005}& \multirow{2}{*}{0.0013}& \multirow{2}{*}{0.0019}& \multirow{2}{*}{0.0027}& \multirow{2}{*}{0.0059}& \multirow{2}{*}{0.0108}& \multirow{2}{*}{0.0113}& \multirow{2}{*}{0.0192}& \multirow{2}{*}{0.0296}& \multirow{2}{*}{0.0302}\\ 
($\epsilon = 0.49$) & & & & & & & & & & \\ 
\texttt{rand\_cholQR}: Multi& \multirow{2}{*}{0.0004}& \multirow{2}{*}{0.0006}& \multirow{2}{*}{0.0008}& \multirow{2}{*}{0.0011}& \multirow{2}{*}{0.0014}& \multirow{2}{*}{0.0017}& \multirow{2}{*}{0.0022}& \multirow{2}{*}{0.0025}& \multirow{2}{*}{0.0027}& \multirow{2}{*}{0.0034}\\ 
($\epsilon_1 = 0.9, \epsilon_2 = 0.49$) & & & & & & & & & & \\ 
\texttt{rand\_cholQR}: Multi& \multirow{2}{*}{0.0005}& \multirow{2}{*}{0.0008}& \multirow{2}{*}{0.0011}& \multirow{2}{*}{0.0015}& \multirow{2}{*}{0.0019}& \multirow{2}{*}{0.0021}& \multirow{2}{*}{0.0029}& \multirow{2}{*}{0.0036}& \multirow{2}{*}{0.0042}& \multirow{2}{*}{0.0054}\\ 
($\epsilon_1 = 0.49, \epsilon_2 = 0.49$) & & & & & & & & & & \\ 
\multirow{2}{*}{\texttt{cholQR2}}& \multirow{2}{*}{0.0003}& \multirow{2}{*}{0.0004}& \multirow{2}{*}{0.0006}& \multirow{2}{*}{0.0009}& \multirow{2}{*}{0.0011}& \multirow{2}{*}{0.0013}& \multirow{2}{*}{0.0018}& \multirow{2}{*}{0.0019}& \multirow{2}{*}{0.0021}& \multirow{2}{*}{0.0024}\\ 
\\ 
\multirow{2}{*}{\texttt{sCholQR3}}& \multirow{2}{*}{0.0005}& \multirow{2}{*}{0.0009}& \multirow{2}{*}{0.0011}& \multirow{2}{*}{0.0014}& \multirow{2}{*}{0.0018}& \multirow{2}{*}{0.0020}& \multirow{2}{*}{0.0028}& \multirow{2}{*}{0.0029}& \multirow{2}{*}{0.0032}& \multirow{2}{*}{0.0037}\\ 
\\ 
\multirow{2}{*}{Householder}& \multirow{2}{*}{0.0005}& \multirow{2}{*}{0.0009}& \multirow{2}{*}{0.0011}& \multirow{2}{*}{0.0014}& \multirow{2}{*}{0.0018}& \multirow{2}{*}{0.0020}& \multirow{2}{*}{0.0028}& \multirow{2}{*}{0.0029}& \multirow{2}{*}{0.0032}& \multirow{2}{*}{0.0037}\\ 
\\ 

\end{tabular}%
}
\caption{Raw runtimes (in seconds) of all experiments with $n = 100,000$ rows.
} \label{tab:runtimes_1e5}
\end{table}

\begin{table}[h]
\centering
\resizebox{\columnwidth}{!}{%
\begin{tabular}{c|rrrrrrrrrr}
 & \multicolumn{10}{c}{Columns}\\ Algorithm & 10 & 20 & 30 & 40 & 50 & 60 & 70 & 80 & 90 & 100 \\ \hline
 \texttt{rand\_cholQR}: Gauss& \multirow{2}{*}{0.0024}& \multirow{2}{*}{0.0040}& \multirow{2}{*}{0.0051}& \multirow{2}{*}{0.0080}& \multirow{2}{*}{0.0105}& \multirow{2}{*}{0.0120}& \multirow{2}{*}{0.0178}& \multirow{2}{*}{0.0190}& \multirow{2}{*}{0.0206}& \multirow{2}{*}{0.0231}\\ 
($\epsilon = 0.99$) & & & & & & & & & & \\ 
\texttt{rand\_cholQR}: Gauss& \multirow{2}{*}{0.0029}& \multirow{2}{*}{0.0048}& \multirow{2}{*}{0.0063}& \multirow{2}{*}{0.0091}& \multirow{2}{*}{0.0117}& \multirow{2}{*}{0.0127}& \multirow{2}{*}{0.0206}& \multirow{2}{*}{0.0211}& \multirow{2}{*}{0.0234}& \multirow{2}{*}{0.0258}\\ 
($\epsilon = 0.49$) & & & & & & & & & & \\ 
\texttt{rand\_cholQR}: Sparse& \multirow{2}{*}{0.0022}& \multirow{2}{*}{0.0041}& \multirow{2}{*}{0.0054}& \multirow{2}{*}{0.0078}& \multirow{2}{*}{0.0107}& \multirow{2}{*}{0.0127}& \multirow{2}{*}{0.0172}& \multirow{2}{*}{0.0195}& \multirow{2}{*}{0.0229}& \multirow{2}{*}{0.0275}\\ 
($\epsilon = 0.99$) & & & & & & & & & & \\ 
\texttt{rand\_cholQR}: Sparse& \multirow{2}{*}{0.0018}& \multirow{2}{*}{0.0040}& \multirow{2}{*}{0.0061}& \multirow{2}{*}{0.0084}& \multirow{2}{*}{0.0138}& \multirow{2}{*}{0.0200}& \multirow{2}{*}{0.0239}& \multirow{2}{*}{0.0334}& \multirow{2}{*}{0.0469}& \multirow{2}{*}{0.0509}\\ 
($\epsilon = 0.49$) & & & & & & & & & & \\ 
\texttt{rand\_cholQR}: Multi& \multirow{2}{*}{0.0021}& \multirow{2}{*}{0.0034}& \multirow{2}{*}{0.0046}& \multirow{2}{*}{0.0069}& \multirow{2}{*}{0.0094}& \multirow{2}{*}{0.0108}& \multirow{2}{*}{0.0147}& \multirow{2}{*}{0.0161}& \multirow{2}{*}{0.0181}& \multirow{2}{*}{0.0208}\\ 
($\epsilon_1 = 0.9, \epsilon_2 = 0.49$) & & & & & & & & & & \\ 
\texttt{rand\_cholQR}: Multi& \multirow{2}{*}{0.0022}& \multirow{2}{*}{0.0038}& \multirow{2}{*}{0.0051}& \multirow{2}{*}{0.0070}& \multirow{2}{*}{0.0098}& \multirow{2}{*}{0.0112}& \multirow{2}{*}{0.0153}& \multirow{2}{*}{0.0176}& \multirow{2}{*}{0.0229}& \multirow{2}{*}{0.0266}\\ 
($\epsilon_1 = 0.49, \epsilon_2 = 0.49$) & & & & & & & & & & \\ 
\multirow{2}{*}{\texttt{cholQR2}}& \multirow{2}{*}{0.0015}& \multirow{2}{*}{0.0031}& \multirow{2}{*}{0.0042}& \multirow{2}{*}{0.0065}& \multirow{2}{*}{0.0087}& \multirow{2}{*}{0.0099}& \multirow{2}{*}{0.0150}& \multirow{2}{*}{0.0161}& \multirow{2}{*}{0.0179}& \multirow{2}{*}{0.0199}\\ 
\\ 
\multirow{2}{*}{\texttt{sCholQR3}}& \multirow{2}{*}{0.0024}& \multirow{2}{*}{0.0047}& \multirow{2}{*}{0.0068}& \multirow{2}{*}{0.0100}& \multirow{2}{*}{0.0134}& \multirow{2}{*}{0.0152}& \multirow{2}{*}{0.0226}& \multirow{2}{*}{0.0244}& \multirow{2}{*}{0.0270}& \multirow{2}{*}{0.0301}\\ 
\\ 
\multirow{2}{*}{Householder}& \multirow{2}{*}{0.0024}& \multirow{2}{*}{0.0047}& \multirow{2}{*}{0.0068}& \multirow{2}{*}{0.0100}& \multirow{2}{*}{0.0134}& \multirow{2}{*}{0.0152}& \multirow{2}{*}{0.0226}& \multirow{2}{*}{0.0244}& \multirow{2}{*}{0.0270}& \multirow{2}{*}{0.0301}\\ 
\\ 
\end{tabular}%
}
\caption{Raw runtimes (in seconds) of all experiments with $n = 1,000,000$ rows. 
} \label{tab:runtimes_1e6}
\end{table}

\begin{table}[h]
\centering
\resizebox{\columnwidth}{!}{%
\begin{tabular}{c|rrrrrrrrrr}
 & \multicolumn{10}{c}{Columns}\\ Algorithm & 10 & 20 & 30 & 40 & 50 & 60 & 70 & 80 & 90 & 100 \\
 \hline
 \texttt{rand\_cholQR}: Gauss& \multirow{2}{*}{0.0212}& \multirow{2}{*}{0.0371}& \multirow{2}{*}{0.0473}& \multirow{2}{*}{0.0766}& \multirow{2}{*}{0.1008}& \multirow{2}{*}{0.1140}& \multirow{2}{*}{0.1657}& \multirow{2}{*}{0.1790}& \multirow{2}{*}{0.1984}& \multirow{2}{*}{0.2268}\\ 
($\epsilon = 0.99$) & & & & & & & & & & \\ 
\texttt{rand\_cholQR}: Gauss& \multirow{2}{*}{0.0264}& \multirow{2}{*}{0.0462}& \multirow{2}{*}{0.0600}& \multirow{2}{*}{0.0888}& \multirow{2}{*}{0.1157}& \multirow{2}{*}{0.1258}& \multirow{2}{*}{0.1921}& \multirow{2}{*}{0.2150}& \multirow{2}{*}{0.2380} & \multirow{2}{*}{}\\ 
($\epsilon = 0.49$) & & & & & & & & & & \\ 
\texttt{rand\_cholQR}: Sparse& \multirow{2}{*}{0.0159}& \multirow{2}{*}{0.0317}& \multirow{2}{*}{0.0436}& \multirow{2}{*}{0.0660}& \multirow{2}{*}{0.0904}& \multirow{2}{*}{0.1039}& \multirow{2}{*}{0.1414}& \multirow{2}{*}{0.1541}& \multirow{2}{*}{0.1756}& \multirow{2}{*}{0.2025}\\ 
($\epsilon = 0.99$) & & & & & & & & & & \\ 
\texttt{rand\_cholQR}: Sparse& \multirow{2}{*}{0.0147}& \multirow{2}{*}{0.0316}& \multirow{2}{*}{0.0438}& \multirow{2}{*}{0.0664}& \multirow{2}{*}{0.0931}& \multirow{2}{*}{0.1105}& \multirow{2}{*}{0.1466}& \multirow{2}{*}{0.1726}& \multirow{2}{*}{0.2152}& \multirow{2}{*}{0.2635}\\ 
($\epsilon = 0.49$) & & & & & & & & & & \\ 
\texttt{rand\_cholQR}: Multi& \multirow{2}{*}{0.0185}& \multirow{2}{*}{0.0312}& \multirow{2}{*}{0.0427}& \multirow{2}{*}{0.0650}& \multirow{2}{*}{0.0889}& \multirow{2}{*}{0.1019}& \multirow{2}{*}{0.1378}& \multirow{2}{*}{0.1508}& \multirow{2}{*}{0.1706}& \multirow{2}{*}{0.1962}\\ 
($\epsilon_1 = 0.9, \epsilon_2 = 0.49$) & & & & & & & & & & \\ 
\texttt{rand\_cholQR}: Multi& \multirow{2}{*}{0.0148}& \multirow{2}{*}{0.0309}& \multirow{2}{*}{0.0428}& \multirow{2}{*}{0.0651}& \multirow{2}{*}{0.0894}& \multirow{2}{*}{0.1023}& \multirow{2}{*}{0.1386}& \multirow{2}{*}{0.1575}& \multirow{2}{*}{0.1903}& \multirow{2}{*}{0.2398}\\ 
($\epsilon_1 = 0.49, \epsilon_2 = 0.49$) & & & & & & & & & & \\ 
\multirow{2}{*}{\texttt{cholQR2}}& \multirow{2}{*}{0.0134}& \multirow{2}{*}{0.0287}& \multirow{2}{*}{0.0396}& \multirow{2}{*}{0.0625}& \multirow{2}{*}{0.0848}& \multirow{2}{*}{0.0964}& \multirow{2}{*}{0.1433}& \multirow{2}{*}{0.1545}& \multirow{2}{*}{0.1724}& \multirow{2}{*}{0.1958}\\ 
\\ 
\multirow{2}{*}{\texttt{sCholQR3}}& \multirow{2}{*}{0.0206}& \multirow{2}{*}{0.0433}& \multirow{2}{*}{0.0598}& \multirow{2}{*}{0.0942}& \multirow{2}{*}{0.1278}& \multirow{2}{*}{0.1453}& \multirow{2}{*}{0.2158}& \multirow{2}{*}{0.2328}& \multirow{2}{*}{0.2603}& \multirow{2}{*}{0.2955}\\ 
\\ 
\multirow{2}{*}{Householder}& \multirow{2}{*}{0.0206}& \multirow{2}{*}{0.0433}& \multirow{2}{*}{0.0598}& \multirow{2}{*}{0.0942}& \multirow{2}{*}{0.1278}& \multirow{2}{*}{0.1453}& \multirow{2}{*}{0.2158}& \multirow{2}{*}{0.2328}& \multirow{2}{*}{0.2603}& \multirow{2}{*}{0.2955}\\ 
\\

\end{tabular}%
}
\caption{Raw runtimes (in seconds) of all experiments with $n = 10,000,000$ rows. Blank entry indicates the experiment could not fit in GPU memory.
} \label{tab:runtimes_1e7}
\end{table}

\end{document}